\numberwithin{equation}{section}
\newcommand{\tr}{\mathrm{tr}}
\newcommand{\supp}{\mathrm{supp}}
\newcommand{\Nm}{\mathrm{Nm}}
\newcommand\xappa\kappa
\newcommand\yota\iota
\newcounter{consta}
\newcounter{constb}
\newcounter{constc}[section]
\newcommand{\vol}{\operatorname{vol}}
\newcommand{\largewedge}{\mbox{\Large $\wedge$}}
\newcommand{\dist}{\mathrm{dist}}
\DeclareFontFamily{OT1}{rsfs}{}
\DeclareFontShape{OT1}{rsfs}{n}{it}{<-> rsfs10}{}
\DeclareMathAlphabet{\mathscr}{OT1}{rsfs}{n}{it}
\newtheorem{thm}{Theorem}[section]
\newtheorem{lem}[thm]{Lemma}
\newtheorem{prop}[thm]{Proposition}
\newtheorem*{lem*}{Lemma}
\newtheorem*{thm*}{Theorem}
\newtheorem*{conj*}{Conjecture}
\newtheorem*{prop*}{Proposition}
\newtheorem{defn*}{Definition}
\newtheorem{ex}[thm]{Example}
\newtheorem{cor}[thm]{Corollary}
\theoremstyle{definition}
\newtheorem{defn}[thm]{Definition}
\theoremstyle{remark}
\newtheorem{rem}[thm]{Remark}
\newtheorem*{obs*}{Observation}
\newtheorem*{rem*}{Remark}
\theoremstyle{definition}\newtheorem*{acknowledgments}{Acknowledgments}
\begin{document}
\title[Disjointness of nilflows from horospherical flows]{Quantitative disjointness of nilflows from horospherical flows}
\author{Asaf Katz}
\address{Department of Mathematics, University of Chicago, Chicago, Illinois 60637, USA,}
\email{asafk@math.uchicago.edu}

\date{}

\begin{abstract} We prove a quantitative variant of a disjointness theorem of nilflows from horospherical flows following a technique of Venkatesh, combined with the structural theorems for nilflows by Green, Tao and Ziegler.
\end{abstract}
\maketitle
\section{Introduction}
In a landmark paper~\cite{furstenberg67}, H. Furstenberg introduced the notion of joinings of two dynamical systems and the concept of disjoint dynamical systems.
Ever since, this property has played a major role in the field of dynamics, leading to many fundamental results.

In his paper, Furstenberg proved the following characterization of a weakly-mixing dynamical system:
\begin{thm}[Furstenberg \cite{furstenberg67}, Theorem~$1.4$]
A dynamical system $(X,\beta,\mu,T)$ is \emph{weakly-mixing} if and only if $(X,T)$ is disjoint from any Kronecker system.
\end{thm}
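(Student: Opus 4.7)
The plan is to establish both directions by combining the spectral characterization of weak mixing with the defining feature of a Kronecker system $(Y,\lambda,S)$: that $L^2(Y,\lambda)$ admits an orthonormal basis of eigenfunctions of the Koopman operator $U_S$.

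For the forward direction, suppose $(X,\beta,\mu,T)$ is weakly mixing and let $\nu$ be an arbitrary joining of $(X,T)$ with a Kronecker system $(Y,S)$; the goal is to show $\nu=\mu\times\lambda$. I would fix an eigenfunction $g\in L^2(Y)$ satisfying $U_S g = \chi g$, disintegrate $\nu$ along its $X$-marginal to produce conditional measures $\{\nu_x\}_{x\in X}$, and form the slice function
\[
h(x) := \int_Y g(y)\,d\nu_x(y)\in L^2(X,\mu).
\]
A direct unwinding of the $T\times S$-invariance of $\nu$ tested against functions of the form $f(x)g(y)$ forces the eigenfunction relation $U_T h = \chi h$. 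Since $T$ is weakly mixing, $U_T$ has purely continuous spectrum on the mean-zero subspace of $L^2(X,\mu)$, so $h\equiv 0$ when $\chi\neq 1$; for $\chi=1$, ergodicity forces $h$ to be the constant $\int g\,d\lambda$. Because the eigenfunctions of $S$ span $L^2(Y,\lambda)$, this identifies $\int f(x)g(y)\,d\nu = \int f\,d\mu\cdot\int g\,d\lambda$ on a dense subspace, whence $\nu = \mu\times\lambda$.

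The converse is a short direct construction. If $(X,T)$ is not weakly mixing, then $U_T$ admits a non-constant measurable eigenfunction $f$ which, by ergodicity, can be normalized to have unit modulus, with eigenvalue $\chi\in S^1\setminus\{1\}$. The map $\pi(x):=f(x)$ then realizes $(X,T)$ as a measurable equivariant extension of the rotation $(Y,S)$, where $Y$ is the closed subgroup of $S^1$ generated by $\chi$ and $\lambda$ is its Haar measure; this $(Y,S)$ is a Kronecker system. The graph measure
\[
\nu(F) := \int F(x,\pi(x))\,d\mu(x)
\]
is a joining of $(X,T)$ and $(Y,S)$ concentrated on the graph of $\pi$, a proper subset of $X\times Y$, so $\nu\neq\mu\times\lambda$ and disjointness fails.

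The only real technical subtlety I anticipate is in the forward direction, where the disintegration of $\nu$ and the manipulation of integrals leading to $U_T h=\chi h$ require $(X,\beta,\mu)$ to be a standard Borel probability space; this is part of Furstenberg's standing setup. Once that is granted, both implications collapse to a one-line application of $T\times S$-invariance. The main bookkeeping hazard I foresee is keeping track of complex conjugates when converting $T\times S$-invariance of $\nu$ into the eigenfunction relation for $h$, rather than any deeper conceptual obstacle.
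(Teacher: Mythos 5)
This statement is quoted from Furstenberg's 1967 paper as background; the present paper gives no proof of it, so there is nothing internal to compare your argument against. Your proof is the standard one and is essentially correct: in the forward direction the slice functions $h(x)=\int_Y g\,d\nu_x$ attached to eigenfunctions of $S$ become eigenfunctions of $U_T$, hence vanish (for $\chi\neq 1$) or are constant (for $\chi=1$) under weak mixing, and since nonconstant characters of an ergodic group rotation have zero Haar integral this identifies $\nu$ with $\mu\times\lambda$ on a dense span; in the converse the graph joining over a unimodular eigenfunction does the job. Two small points worth making explicit: first, the converse genuinely requires the standing hypothesis that $(X,T)$ is ergodic (a non-ergodic system such as the identity on two points is disjoint from every ergodic group rotation by unique ergodicity, yet is not weakly mixing), so ergodicity is not merely a normalization convenience but part of the correct statement; second, your eigenfunction $f$ a priori takes values in a coset of the closed subgroup $Y$ generated by $\chi$ rather than in $Y$ itself when $\chi$ has finite order, which is fixed by multiplying $f$ by a constant before pushing forward $\mu$ to get Haar measure on $Y$.
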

Namely, given any compact abelian group $A$, equipped with the action of $A$ on itself by left-translation $R_{a}$, the only joining between $(X,T$) and $(A,R_{a})$ is the trivial joining given by the product measure on the product dynamical system $(X\times A, T\times R_{a})$.
The Wiener-Wintner ergodic theorem readily follows from Furstenberg's theorem.
In \cite{bourgain90}, J. Bourgain derived a strengthening of Furstenberg's disjointness theorem, which amounts to the \emph{uniform Wiener-Wintner theorem}. The proof is along the lines of Furstenberg's but utilizing the Van-der-Corput trick to show uniformity over various Kronecker systems.
In a recent work~\cite{venkatesh10}, A. Venkatesh gave a \emph{quantitative} statement of the uniform Wiener-Wintner theorem for the case of the horocyclic flow on compact homogeneous spaces of $SL_{2}(\mathbb{R})$ (and in principal, his proof works also for non-compact spaces as well, by allowing the decay rate to depend on Diophantine properties of the origin point of the orbit).
Venkatesh's proof follows Bourgain's, but crucially uses a quantitative version of the Dani-Smillie theorem and quantitative estimates regarding decay of matrix coefficients in order to deduce the required effective estimate. Related work has been discussed in~\cite{Flaminio2016,tanis2015,sarnakubis}.

In this work, we extend Venkatesh's method in order to prove disjointness between general nilflows and horospherical flows.
Nilflows are generalizations of Kronecker systems which we define bellow, following the conventions of Green-Tao-Ziegler~\cite{greentaoziegler12} and the related work of Green-Tao~\cite{greentao12} and Leibman~\cite{leibman2002}:
\begin{defn}
\begin{itemize}
	\item A \emph{nilmanifold} $Y$ is a homogeneous space $Y=~N/\Lambda$ where $N$ is a nilpotent Lie group and $\Lambda$ is a lattice contained in $N$.
	\item For a subgroup $H\leq N$, we have a natural $H$-action on a nilmanifold $Y=N/\Lambda$ by left-translations. An \emph{$H$-nilflow} is the dynamical system $(Y,H)$.
	\item A ($H$-)\emph{nilsequence} of degree $\leq \dim N$ is the set of samplings of a Lipschitz continuous function $f:Y \to \mathbb{C}$ along a particular $H$-orbit $H.y\subset Y$ for some $y\in Y$.
	\item A ($H$-)\emph{nilcharacter} of degree $\leq \dim N$ is a nilsequence where the sampling function $f$ satisfies that $\left\lVert f\right\rVert_{\infty} =1$ and there exists some character $\chi$ in the dual group to $Z(N)/Z(N)\cap \Lambda$ such that transformation rule $f(g.y)~=~\chi(g)f(y)$ holds for every $y\in~N/\Lambda, g\in Z(N)$.
\end{itemize}
Nilcharacters take the role of characters in the analysis of nilflows on nilpotent groups which are not abelian.
We note the following basic properties of nilcharacters:
\begin{itemize}
	\item [Approximation] Every nilsequence of degree $\leq \dim N$ can be uniformly approximated by a linear combination of nilcharacters of degree $\leq \dim N$. More precisely, given $\varepsilon>0$, one may approximate a nilsequence of degree $\leq \dim N$ by a linear combination of $O\left(1/\varepsilon^{\dim N}\right)$ nilcharacters of degree $\leq \dim N$ and their coefficients are bounded by $\|f\|_{\infty}$, where $f$ is the sampling function of the nilsequence up to an error of $O(\varepsilon)$. This assertion follows from a Fejer kernel computation and a partition of unity argument (c.f.~\cite[Lemma~$3.7$]{greentao12},\cite[Lemma~$E.5$, \S6]{greentaoziegler12}). \\
	\item [Differentiation] Given a nilcharacater of degree $\leq\dim N$ defined as $f(h.y)$ for some $f:N/\Lambda \to \mathbb{C}$ along the orbit $H.y \subset N/\Lambda$ and fixing any $k\in H$ we have that the ``differentiated'' product $f(k.h.y)\cdot~\overline{f(h.y)}$ is a \emph{nilsequence of degree $\leq \dim N -1$}. This is proved in \cite[Lemma~$E.8.(iv),E.7$]{greentaoziegler12},\cite[Lemma~$1.6.13$]{tao2012}.
\end{itemize}
\end{defn}

The main examples to keep in mind are nilflows which are realized on abelian groups which amount to flows over quotient spaces of $\mathbb{R}^{d}$ and nilflows which are realized on meta-abelian groups which amount to flows over homogenous spaces of the Heisenberg group.
In the first case, $\mathbb{Z}$-nilcharacters amount to linear characters such as $e(n\cdot \alpha)$ while in the second case, $\mathbb{Z}$-nilcharacters amount to ``quadratic characters'' defined by quadratic bracket polynomials, such as $e(n^{2}\cdot\alpha)$ and $e(n\alpha\cdot \left\{n\beta\right\})$.

Nilflows play a fundamental role in the work by Furstenberg and Weiss~\cite{FuWe96} about multiple recurrence properties of dynamical systems, and ever since had a substantial role in the proofs of many non-conventional recurrence theorems including the Green-Tao theorem regarding existence of arithmetic progressions inside the set of prime numbers. We refer the reader to \cite[Chapter $1$]{tao2012} for a general overview of the theory of nilflows.

We now introduce the required definitions from the theory of unipotent flows over semisimple Lie groups.
\begin{defn}
Let $G$ be a real Lie group, we say that a subgroup $H\leq G$ is \emph{horospherical}, if there exist an element $a\in G$ such that $H=\left\{g\in G \mid a^{n}ga^{-n} \to e, n\to -\infty \right\}$. Equivalently such a subgroup $H$ is the unipotent radical $Rad_{U}(P)$ of a proper parabolic subgroup $P$ of $G$.
\end{defn}
Such groups are nilpotent, connected and simply-connected.
The main example to keep in mind is the unipotent subgroup composed of upper-triangular matrices with ones in the diagonal entries in $SL_{2}(\mathbb{R})$, which can be easily observed to be horospherical by picking $a$ to be a non-trivial diagonal matrix with $a_{1,1}>a_{2,2}$.
Horospherical subgroup $H\leq G$ is called minimal-horospherical if it does not strictly contain another horospherical subgroup (equivalently, $H$ is the unipotent radical of a \emph{maximal parabolic} subgroup of $G$).
Given a one-parameter semi-group $A=\left\{ a_{t} \mid t\geq 0\right\} \subset G$ consisting of semi-simple elements, we will say that $H$ is horospherical with respect to $A$ if $H=\left\{g\in G \mid a_{n}ga_{-n} \to e, n\to -\infty \right\}$.
Moreover, in the case $G/\Gamma$ is non-compact, we will assume that $G$ is defined over $\mathbb{Q}$ and $\Gamma$ is an arithmetic lattice, and from now on we fix a $\mathbb{Q}$-rational proper embedding $i:G/\Gamma~\hookrightarrow~SL_{k}(\mathbb{R})/SL_{k}(\mathbb{Z})$.

For any horospherical subgroup of a semi-simple Lie group $H\leq G$ with an associeted one-parameter expanding semi-group $A=\left<a_{t}\right>$, we define a family of subsets $B^{H}_{R}\subset H$ in the following manner
\begin{equation*}
    B^{H}_{R}=a_{\log R}B^{H}_{1}a_{-\log R},
\end{equation*}
where $B^{H}_{1}\subset H$ is the ball of radius $1$ with respect to a right-invariant Riemannian metric defined over $G$, by the Killing form, defined with respect to some fixed Cartan subgroup of $G$ which contains $A$ as a semi-group of this positive Cartan subgroup.
We note that under this choice of the Cartan subgroup, the restriction of the Killing form over $Lie(H)$ is \emph{definite}, giving rise to a norm on $Lie(H)$.

The family of sets $\left\{B^{H}_{R}\right\}$ is a F\o lner sequence for $H$ (as $H$ is nilpotent).
We define a number $d_{H}$ to be $d_{H}=\tr\left(Ad_{a_{1}\vert_{Lie(H)}}\right)$.
The number $d_{H}$ is related to $\vol_{H}\left( B_{R}^{H}\right)$ by using the formula for the modular function of a parabolic group $P$ which contains $H$ as its unipotent radical (c.f. \cite[Propositions~$8.27$,$8.44$,$8.45$]{knapp2013lie}), in the following manner:
\begin{equation}\label{eq:ball-vol}
\vol_{H}\left(B_{R}^{H}\right) = \vol_{H}\left(a_{\log R}B_{1}^{H}a_{-\log R}\right) = R^{d_{H}}\cdot \vol_{H}\left(B_{1}^{H}\right).
\end{equation}

\begin{defn}
For a smooth function $f:X\to \mathbb{C}$, fixing a basis $L=\left\{X_{i}\right\}$ for the Lie algebra $Lie(G)$ we define the \emph{order $K$ Sobolev norm} as
\begin{equation*}
    Sob_{K}(f)=\max_{0\leq \ell \leq K}\left\{\lVert X_{i_{1}}\cdots X_{i_{\ell}}.f \rVert_{\infty} \mid X_{i_j} \in L\right\},
\end{equation*}
where the vector fields $\left\{X_{i}\right\}$ acts as a derivation of the smooth function $f$.
\end{defn}

\begin{defn}\label{def:quant-equi}
Let $G$ be a semi-simple linear group without compact factors, and let $\Gamma\leq G$ be a lattice, and denote $X=G/\Gamma$.
Assume that $H\leq G$ is an horospherical subgroup with associated expanding semi-group $A=\left<a_{t}\right>$, and the associated averaging family of subsets $\left\{ B^{H}_{R}\right\}\subset~H$.
Let $x\in X$ be an $H$-generic point, namely $\overline{H.x}=X$.
We say that the orbit $H.x$ \emph{equidistributes with a polynomial rate $\gamma_{\text{Equidistribution}}$} for functions with finite Sobolev norm of order $K$ if fo any smooth and bounded function with vanishing integral having $Sob_{K}(f)<\infty$ and for any $\eta<\gamma_{\text{Equidistribution}}$, the following estimate holds:
\begin{equation*}
    \left\lvert \frac{1}{\vol_{H}\left(B^{H}_{R}\right)}\int_{u\in B^{H}_{R}}f(u.x)du \right\rvert \ll \left\lvert \vol_{H}\left(B^{H}_{R}\right)\right\rvert^{-\eta}\cdot Sob_{K}(f).
\end{equation*}
\end{defn}

Our first main theorem provides a quantitative disjointness statement between nilflows and horospherical orbits which equidistribute in a polynomial rate.
Let $H$ be a fixed real linear connected nilpotent Lie group.
We assume that there exists some linear group $G$ which is semi-simple, with no compact factors and contains $H$ as a horospherical subgroup with respect to some given element $a\in G$.
Moreover, we assume that there exists some linear group $N$ which is nilpotent and contains $H$ as a subgroup.
We fix two lattices $\Gamma \leq G$ and $\Lambda \leq N$ and denote by $X=G/\Gamma$ and $Y=N/\Lambda$ the associated homogeneous spaces. In the case where $\Gamma$ is non-uniform, we will also assume that $G$ is defined over $\mathbb{Q}$ and $\Gamma$ is arithmetic.
$H$ acts by left-translations on both $X$ and $Y$.
We note here that by the Howe-Moore theorem, the dynamical system $(X,H)$ is mixing.

The dynamical systems $(X,H)$ and $(Y,H)$ are quantitatively disjoint in the following sense:
\begin{thm}\label{thm:effective-disjointness}
\sloppy For any $x\in X$ which is $H$-generic with polynomial equidistribution rate of $\gamma_{\text{Equidistribution}}$ with respect to $\left\{B^{H}_{R}\right\}$ for functions with finite Sobolev norm of order $K$, there exists a number  $\gamma_{\text{Disjointness}}~=~\gamma_{\text{Disjointness}}(\Gamma,\gamma_{\text{Equidistribution}},\dim N)>~0$ and $K'=K'(N,K)>0$ such that for any nilcharacter $\psi : Y \to \mathbb{C}$ of degree less or equal than $\dim N$ and any $f:X\to\mathbb{C}$ which is smooth, bounded, with vanishing integral and of finite $K'$ Sobolev norm, the following estimate holds:
\begin{equation}\label{eq:quant-disjoint}
\left\lvert \frac{1}{\vol_{H}\left(B^{H}_{R}\right)}\int_{u \in B^{H}_{R}}\psi(u.y)f(u.x)du\right\rvert  \ll_{f} \left\lvert \vol_{H}\left(B^{H}_{R}\right)\right\rvert^{-\eta},
\end{equation}
for any $\eta<\gamma_{\text{Disjointness}}$.
One may take
\begin{equation}
\gamma_{\text{Disjointness}}<\frac{1}{\left(2\cdot\dim H +2\right)^{\dim N}}\cdot\left(\frac{2M}{2M+1}\right)^{\dim N}\min\left\{\gamma_{\text{Equidistribution}},\frac{s}{d_{H}\cdot(2d_{H}+3s)}\right\},
\label{eq:gamma-def}
\end{equation}
where $s>0$ is a bound for the decay rates of the matrix coefficient $\left<h.f,f \right>_{L^{2}(X,\mu)}$ for $h\in H$, and $M$ is the maximum between $K'$ and the order of the Sobolev norm used in the mixing estimate of Theorem~\ref{thm:HC-mixing}.
\end{thm}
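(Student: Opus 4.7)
The plan is to argue by induction on the degree $\dim N$ of the nilcharacter $\psi$, with the base case (degree zero or one) reducing to Venkatesh's theorem for linear characters and to the equidistribution hypothesis on the orbit $H.x$. At each inductive step, I will apply a van~der~Corput / Cauchy--Schwarz trick over $H$-translations inside $B_R^H$, exactly as in Venkatesh's treatment of the horocyclic case, in order to trade one degree of the nilcharacter for a matrix coefficient estimate on $X$.

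More concretely, write the integral on the left hand side of \eqref{eq:quant-disjoint} as $I(R)$ and let $B_r^H\subset B_R^H$ be a much smaller ball, of radius $r=R^{\theta}$ for some $\theta\in(0,1)$ to be optimized. Using that $\{B_R^H\}$ is a F\o lner sequence (with quantitative rate controlled by the ratio formula \eqref{eq:ball-vol}), replace $u$ by $ku$ for $k\in B_r^H$ at negligible cost, average over $k$, and then apply Cauchy--Schwarz in $u$. Bounding $|\psi|\leq 1$ pointwise, this produces
\begin{equation*}
|I(R)|^{2}\ll \frac{1}{\vol_{H}(B_{r}^{H})^{2}}\int_{k,k'\in B_{r}^{H}}\left(\frac{1}{\vol_{H}(B_{R}^{H})}\int_{u\in B_{R}^{H}}\Psi_{k,k'}(u.y)\,F_{k,k'}(u.x)\,du\right)dk\,dk'+ \text{error},
\end{equation*}
where $\Psi_{k,k'}(u.y)=\psi(ku.y)\overline{\psi(k'u.y)}$ and $F_{k,k'}(u.x)=f(ku.x)\overline{f(k'u.x)}$. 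By the Differentiation property, after fixing $k,k'$ the function $\Psi_{k,k'}$ is a nilsequence of degree $\leq \dim N-1$, and by the Approximation property it can be written as a linear combination of $O(\varepsilon^{-\dim N})$ nilcharacters of degree $\leq \dim N-1$ up to an $L^{\infty}$-error of size $\varepsilon$.

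The inductive hypothesis then applies to each of those lower-degree nilcharacters against the smooth weight $F_{k,k'}$ on $X$, yielding savings of the form $\vol_{H}(B_{R}^{H})^{-\eta_{\dim N-1}}$ for each fixed $(k,k')$ with $k\neq k'$ sufficiently separated. The remaining diagonal-type contribution, corresponding to $k$ close to $k'$, is handled by a Howe--Moore / matrix coefficient decay bound of the form $|\langle h.f,f\rangle|\ll Sob_{M}(f)^{2}\|h\|^{-s}$, which is where the parameter $s$ enters the final exponent. Optimizing $\theta$ (equivalently, the radius $r$) balances the saving from the inductive hypothesis against the error from the F\o lner replacement and the off-diagonal matrix coefficient decay; since one loses a factor roughly $(2d_{H}+2)$ in the exponent at each of the $\dim N$ levels of Van der Corput, and the $M/(M+1/2)$-type ratios enter from Cauchy--Schwarz, the recursion yields precisely the bound \eqref{eq:gamma-def}.

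The main obstacle, and the place where careful bookkeeping is needed, is the propagation of Sobolev norms and of the quantitative equidistribution rate through the recursion: at each step the function $F_{k,k'}$ must remain controlled in a Sobolev norm of bounded order, uniformly in $k,k'\in B_{r}^{H}$, which forces the final Sobolev order $K'$ to grow like $K+O(\dim N)$ and the loss $\gamma_{\text{Disjointness}}$ to shrink by a factor of roughly $(2\dim H+2)$ per degree. A secondary technical point is that in the non-uniform case one must verify that the F\o lner replacement and the matrix coefficient decay survive the excursions into the cusp of $X=G/\Gamma$, using the fixed $\mathbb{Q}$-rational embedding into $SL_{k}(\mathbb{R})/SL_{k}(\mathbb{Z})$ together with the assumed polynomial equidistribution rate $\gamma_{\text{Equidistribution}}$ of the orbit $H.x$.
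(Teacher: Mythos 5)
Your proposal follows essentially the same route as the paper: induction on the degree of $\psi$, a van der Corput/Cauchy--Schwarz step over translates $k,k'\in B_r^H$ with $r=R^{\theta}$ a small power, the differentiation and approximation properties to reduce $\psi(k\cdot)\overline{\psi(k'\cdot)}$ to lower-degree nilcharacters, a matrix-coefficient input to supply the exponent $s$, and an optimization in $r$ and in the approximation parameter $\varepsilon$. One step, however, is not correct as written and must be repaired. You propose to apply the inductive hypothesis to the weight $F_{k,k'}(u.x)=f(ku.x)\overline{f(k'u.x)}$ directly for ``separated'' $k,k'$, reserving the matrix-coefficient bound for the near-diagonal $k\approx k'$. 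But the inductive hypothesis (like the base-case equidistribution hypothesis) requires a test function of \emph{vanishing integral}, and $F_{k,k'}$ has mean exactly $\rho_{f,f}(k k'^{-1})$, which is not small when $k$ is close to $k'$. The paper's fix is to center: apply the induction to $F_{k,k'}-\rho_{f,f}(kk'^{-1})$ for \emph{all} pairs $(k,k')$, and then bound the leftover average
\begin{equation*}
\frac{1}{\vol_H(B_r^H)^2}\iint_{b_1,b_2\in B_r^H}\bigl\lvert\rho_{f,f}(b_1 b_2^{-1})\bigr\rvert\,db_1\,db_2 \ll \frac{\vol_H(B_t^H)}{\vol_H(B_r^H)}+t^{-s},
\end{equation*}
where the near-diagonal is controlled by its small \emph{measure} and the off-diagonal by Howe--Moore decay --- i.e.\ the roles of the two regimes are the opposite of what you describe. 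With that correction, the rest of your bookkeeping (Sobolev losses $\lVert\log b_i\rVert^{M}$ giving $r^{2Md_H}$, the $(2\dim H+2)$ loss per degree, the $2M/(2M+1)$ factor from balancing $r$ against the induction saving) matches the paper's computation and yields \eqref{eq:gamma-def}.
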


As an example for an application, we give the following corollary, which establishes quantitative cancellation in horocyclic averages over compact surfaces twisted by quadratic characters:
\begin{cor}
Let $G=SL_{2}(\mathbb{R})$, $\Gamma \leq G$ be a uniform lattice.
Then for any $f:G/\Gamma \to \mathbb{R}$ which is smooth, compactly-supported and of vanishing integral with finite Sobolev norm of order $3$, any $x\in G/\Gamma$ and any $\alpha \in \mathbb{R}$, the following estimate holds:
\begin{equation}
\left\lvert \frac{1}{2R}\int_{t=-R}^{R}e\left(\alpha t^{2}/2-\left\{\sqrt\alpha t \right\}\sqrt\alpha t\right)f(u_{t}.x)dt \right\rvert \leq_{f} R^{-\gamma},
\label{eq:generalized-poly-venkatesh}
\end{equation}
for any $\gamma<~\frac{6^{3}}{5^{3}}\cdot\frac{\Re(s_{1})}{2+3\cdot\Re(s_1)}$, where $\lambda_{1}=s_{1}(1-s_{1})$ the first non-trivial Laplacian eigenvalue.
In the case where $\Re(s_1)=1$, we may choose any $\gamma<\frac{6^3}{5^6 \cdot 7}\approx 0.0019\dots$
\end{cor}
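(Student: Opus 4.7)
The plan is to realize the twisted phase $e\bigl(\alpha t^{2}/2 - \{\sqrt\alpha\, t\}\sqrt\alpha\, t\bigr)$ as a sampling of a nilcharacter of degree $\leq 3$ on the three-dimensional Heisenberg nilmanifold, and then invoke Theorem~\ref{thm:effective-disjointness} with $G = SL_{2}(\mathbb{R})$, $\Gamma$ the given uniform lattice, and $N$ the Heisenberg group with $\Lambda = N(\mathbb{Z})$. Because $\Gamma$ is uniform every $u_{t}$-orbit is generic, so the only hypotheses to verify are polynomial effective equidistribution of horocycle averages and a polynomial decay rate for matrix coefficients under $H$, both of which follow from the spectral gap on $L^{2}_{0}(G/\Gamma)$.

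\textbf{Step 1 (building the nilcharacter).} Parametrize $N$ by triples $[x,y,z]$ with multiplication $[x_{1},y_{1},z_{1}][x_{2},y_{2},z_{2}] = [x_{1}+x_{2},\,y_{1}+y_{2},\,z_{1}+z_{2}+x_{1}y_{2}]$, and fix the character $\chi([0,0,z]) = e(z)$ of $Z(N)/(Z(N)\cap\Lambda)\cong \mathbb{R}/\mathbb{Z}$. The function $F([x,y,z]\Lambda) := e\bigl(z - \{y\}x\bigr)$ is well defined on $N/\Lambda$, satisfies $\lVert F\rVert_{\infty} = 1$, and obeys $F(\zeta\cdot p) = \chi(\zeta)F(p)$ for $\zeta\in Z(N)$; a standard mollification in the $y$-variable produces a Lipschitz refinement with the same transformation property, hence a nilcharacter in the sense of the paper. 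Taking the one-parameter subgroup $g_{t} = \exp\bigl(\sqrt{\alpha}\,t\,(X+Y)\bigr) = [\sqrt\alpha\, t,\,\sqrt\alpha\, t,\,\alpha t^{2}/2]$ and base point $y_{0}=e\Lambda$, a direct computation gives $F(g_{t}.y_{0}) = e\bigl(\alpha t^{2}/2 - \{\sqrt\alpha\, t\}\sqrt\alpha\, t\bigr)$, matching the phase of~\eqref{eq:generalized-poly-venkatesh} exactly.

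\textbf{Step 2 (quantitative inputs on $SL_{2}(\mathbb{R})$).} For uniform $\Gamma$ the effective horocycle equidistribution theorem of Burger and Flaminio--Forni produces a polynomial equidistribution rate $\gamma_{\text{Equidistribution}}$ controlled by $\Re(s_{1})$ for observables of bounded Sobolev order, while mixing of the horocycle flow gives $|\langle u_{t}f,f\rangle_{L^{2}_{0}(X)}| \ll \lVert u_{t}\rVert^{-s}\,\mathrm{Sob}(f)^{2}$ with $s = \Re(s_{1})$ via Ratner-type decoupling on the complementary series. Both inputs are available at the Sobolev order $M = 3$ required in the statement of Theorem~\ref{thm:effective-disjointness}.

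\textbf{Step 3 (substitution).} Substituting $\dim H = 1$, $\dim N = 3$, $d_{H} = 1$, $M = 3$, and the above values of $s$ and $\gamma_{\text{Equi}}$ into the explicit bound~\eqref{eq:gamma-def}, the minimum is attained on the branch $s/(d_{H}(2d_{H}+3s)) = \Re(s_{1})/(2+3\Re(s_{1}))$, and the prefactor $\bigl(2M\bigr)^{\dim N}/\bigl((2\dim H + 2)(2M+1)\bigr)^{\dim N}$ collapses to the numerical constants advertised in the corollary. The \emph{main obstacle} is Step~1: producing an honest Lipschitz function $F$ on $N/\Lambda$, compatible with the central character and with the integer lattice ambiguity, whose sampling along an explicit one-parameter subgroup of $N$ reproduces the twisted phase \emph{exactly}, so that we land in the nilcharacter formalism of Theorem~\ref{thm:effective-disjointness} rather than merely approximating the phase by a nilsequence. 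Once that identification is made, the remainder is a direct invocation of Theorem~\ref{thm:effective-disjointness} together with the classical spectral-gap estimates recalled in Step~2.
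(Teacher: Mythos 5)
Your overall route is exactly the paper's: realize the phase as a nilcharacter on the Heisenberg nilmanifold $N(\mathbb{R})/N(\mathbb{Z})$ via the sampling function $e(z-\{y\}x)$ along the one-parameter subgroup $t\mapsto\exp(\sqrt{\alpha}\,t(X+Y))$, feed this into Theorem~\ref{thm:effective-disjointness} with Burger's effective equidistribution as the input rate, and read off the constant from~\eqref{eq:gamma-def}. Step 1 coincides with the paper's construction (the paper performs the same well-definedness computation in matrix coordinates), so the item you flag as the ``main obstacle'' is not where the difficulty lies.

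The genuine gap is in Step 3, in the value of $M$. In Theorem~\ref{thm:effective-disjointness}, $M$ is the maximum of $K'=K'(N,K)$ and the Sobolev order in the mixing estimate, and $K'$ is \emph{not} the base order $K=3$: the induction on the degree of the nilcharacter applies the van der Corput differentiation twice (since $\dim N=3$), and each round replaces $f$ by products of translates $f(b_1\cdot)\overline{f(b_2\cdot)}$ whose Sobolev control costs a factor in the order; the paper takes $M=K'=12$ (``twice the differentiation result resulting in a factor of $4$''). This is not cosmetic: with $M=12$ the prefactor in~\eqref{eq:gamma-def} is $\frac{1}{4^{3}}\left(\frac{24}{25}\right)^{3}=\frac{6^{3}}{5^{6}}$, which is the constant the corollary is built from, whereas your $M=3$ gives $\frac{1}{4^{3}}\left(\frac{6}{7}\right)^{3}=\left(\frac{6}{28}\right)^{3}$, so your substitution does not ``collapse to the numerical constants advertised.'' Moreover, asserting that the hypotheses of Theorem~\ref{thm:effective-disjointness} are met ``at Sobolev order $3$'' is precisely the point that needs justification and fails: the theorem demands finite Sobolev norm of order $K'$, and tracking how that order grows through the two differentiation steps is the actual content of deducing the corollary with an explicit exponent.
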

\begin{proof}
The corollary follows from Theorem~\ref{thm:effective-disjointness} by embedding the additive group of $\mathbb{R}$, $\mathbb{G}_{a}(\mathbb{R})$ in two ways:
First, embed $\mathbb{G}_{a}(\mathbb{R})\hookrightarrow SL_{2}(\mathbb{R})$ by identifying $\mathbb{G}_{a}(\mathbb{R})$ with the upper-triangular unipotent subgroup of $SL_{2}(\mathbb{R})$.
Second, embed $\mathbb{G}_{a}(\mathbb{R})\hookrightarrow N(\mathbb{R})$, where $N(\mathbb{R})$ is the Heisenberg group, identified with its copy inside $SL_{3}(\mathbb{R})$ by $$t \mapsto \exp\left(\sqrt{\alpha}\cdot t\cdot\left(\begin{smallmatrix}
	0 & 1 & 0 \\
	0 & 0 & 1 \\
	0 & 0 & 0
\end{smallmatrix}\right)  \right).$$
It follows that $e\left(\alpha t^{2}/2-\left\{\sqrt\alpha t \right\}\sqrt\alpha t\right)$ can be realized as an $\mathbb{R}$-nilcharacter on a $\mathbb{G}_{a}(\mathbb{R})$-orbit in the nilmanifold $N(\mathbb{R})/N(\mathbb{Z})$, by picking the origin point $y=\left(\begin{smallmatrix}
	1 & 0 & 0 \\
	0 & 1 & 0 \\
	0 & 0 & 1
\end{smallmatrix}\right)N(\mathbb{Z}) \in N(\mathbb{R})/N(\mathbb{Z})$ and the sampling function $F:N(\mathbb{R})/N(\mathbb{Z}) \to \mathbb{C}$ to be
\begin{equation*}
    F\left(\left(\begin{smallmatrix}
	1 & x & y \\
	0 & 1 & z \\
	0 & 0 & 1
\end{smallmatrix}\right) \right) = e(y-\left\{z\right\}\cdot x).
\end{equation*}
We note that this function is indeed $N(\mathbb{Z})$-invariant by doing the following computation:
\begin{equation*}
\left(\begin{smallmatrix}
	1 & x & y \\
	0 & 1 & z \\
	0 & 0 & 1
\end{smallmatrix}\right)\cdot \left(\begin{smallmatrix}
	1 & a & b \\
	0 & 1 & c \\
	0 & 0 & 1
\end{smallmatrix}\right) = \left(\begin{smallmatrix}
	1 & a+x & b+xc+y \\
	0 & 1 & c+z \\
	0 & 0 & 1
\end{smallmatrix}\right),    
\end{equation*}
and hence
\begin{equation*}
\begin{split}
F\left(\left(\begin{smallmatrix}
	1 & x & y \\
	0 & 1 & z \\
	0 & 0 & 1
\end{smallmatrix}\right)\cdot \gamma\right) &= e((b+xc+y)-\left\{c+z \right\}(a+x)),
\end{split}
\end{equation*}
for $\gamma\in N(\mathbb{Z})$ and we have the following equalities modulo $1$:
\begin{equation*}
\begin{split}
    b+xc+y-\left\{ c+z\right\}\cdot(a+x) &\equiv y+xc-\left\{ c+z\right\}\cdot(a+x) \\
    &\equiv y+xc-\left\{ c+z\right\}\cdot(x) \\
    &\equiv y-\left\{z\right\}\cdot x.
\end{split}
\end{equation*}

Using \eqref{eq:quant-disjoint} with the definition of $\gamma$ as given in \eqref{eq:gamma-def}, combined with a sharp equidistribution result proved for this case by M. Burger~\cite[Theorem~$2.C$]{burger1990}, as we have that $d_{H}=1, \dim H=1$, $\dim N=3$ and $M=K'=12$ as we need to use twice the differentiation result resulting in a factor of $4$, we deduce the claimed estimate.
One may recover a disjointness statement for the $\mathbb{R}$-nil-sequence $e(\alpha\cdot t^2)$ in a similar manner by taking a product construction over this nilmanifold, as explained in \cite[Example $5$]{green-tao-quad}.

\end{proof}
The proof of the above mentioned theorem is a quantification of the following qualitative disjointness theorem:
\begin{thm}\label{thm:non-effective}
Let $G$ be a semi-simple Lie group without compact factors. Assume that $H\leq G$ is a closed and connected nilpotent group, and $x\in X=G/\Gamma$ is an element such that the orbit $H.x$ equidistributes in $X$ when sampled over the F\o lner sequence formed by the subsets $\left\{B^{H}_{R}\right\}$.
Then for any nilflow $(N/\Lambda,T)$ admitting an $H$-action and every point $y \in N/\Lambda$ the following holds:  for every bounded Lipschitz continuous functions $f_{1}:~N/\Lambda~\to~\mathbb{C}, f_{2}:G/\Gamma \to \mathbb{C}$ we have:
\begin{equation}\label{eq:non-effective-thm}
\frac{1}{\vol_{H}\left(B^{H}_{R}\right)}\int_{u \in B^{H}_{R}}f_{1}(u.y)f_{2}(u.x)du \to \int_{\overline{H.y}}f_{1}d\overline{H.y}\cdot \int_{G/\Gamma}f_{2}dm,
\end{equation}
where $d\overline{H.y}$ is the normalized Haar measure supported on the \emph{homogenous} orbit closure of the $H$-orbit $\overline{H.y} \subset N/\Lambda$.
\end{thm}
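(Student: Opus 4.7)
The plan is to prove the theorem by induction on the degree $d$ of the nilcharacter (bounded by $\dim N$), using a Van der Corput type inequality for the amenable group $H$ to reduce to nilsequences of strictly lower degree. I begin with three reductions. By the approximation property for nilsequences recalled above, it suffices to prove the estimate when $f_1 = \psi$ is a nilcharacter of some degree $d \leq \dim N$. By Ratner's orbit closure theorem applied to the closed connected nilpotent subgroup $H \leq N$ acting on $N/\Lambda$, the orbit closure $\overline{H.y}$ is a closed homogeneous subnilmanifold of the form $N'/\Lambda'$, so after replacing $N$ by $N'$ I may assume $\overline{H.y} = N/\Lambda$ and that $d\overline{H.y}$ is the Haar measure on the full nilmanifold. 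Finally, subtracting $\int_X f_2 \, dm$ from $f_2$ reduces the problem to the case $\int_X f_2 \, dm = 0$, with the constant remainder handled by the hypothesized equidistribution of $H.x$.

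The base case $d = 0$ (so that $\psi$ is constant) is immediate from the equidistribution of $H.x$ in $X$. For the inductive step, set $a(u) := \psi(u.y)\, f_2(u.x)$ and $I_R := \vol_H(B^H_R)^{-1} \int_{B^H_R} a(u) \, du$. Applying a Van der Corput inequality in the amenable group $H$ with averaging sets $B^H_r$ for $r$ fixed, together with the F\o lner property of $\{B^H_R\}$ to absorb boundary errors as $R \to \infty$, yields
\begin{equation*}
|I_R|^2 \;\lesssim\; \frac{1}{\vol_H(B^H_r)} \int_{B^H_r} \left| \frac{1}{\vol_H(B^H_R)} \int_{B^H_R} \Phi_k(u.y)\, F_k(u.x) \, du \right| dk \;+\; o_R(1),
\end{equation*}
where $\Phi_k(u.y) := \psi(k.u.y)\overline{\psi(u.y)}$ and $F_k(u.x) := f_2(k.u.x)\overline{f_2(u.x)}$.

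By the differentiation property of nilcharacters, $\Phi_k$ is for each fixed $k \in H$ a nilsequence on $N/\Lambda$ of degree at most $d - 1$. Approximating it by a bounded linear combination of nilcharacters of degree $\leq d-1$ and applying the inductive hypothesis to each piece paired with the bounded function $F_k$ on $X$, the inner average over $B^H_R$ converges as $R \to \infty$ to
\begin{equation*}
\int_{N/\Lambda} \Phi_k \, d\mu_Y \cdot \langle k.f_2 , f_2 \rangle_{L^2(X,m)},
\end{equation*}
whose modulus is at most $|\langle k.f_2, f_2\rangle|$ since $|\Phi_k| \leq 1$. By the Howe--Moore theorem the action $(X,H)$ is mixing and $f_2$ has zero mean, so $\langle k.f_2, f_2 \rangle \to 0$ as $k$ escapes compact subsets of $H$; hence its F\o lner average over $B^H_r$ tends to zero as $r \to \infty$. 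Taking $R \to \infty$ first for fixed $r$ and then $r \to \infty$ yields $|I_R|^2 \to 0$, completing the induction.

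The main obstacle is the clean execution of the Van der Corput step inside the nilpotent group $H$: one must ensure that thickening the averaging set by $B^H_r$ matches with a left shift of $\psi$ by an element $k \in H$ rather than a conjugated variant, so that the differentiation property applies verbatim. A secondary subtlety is that both $\Phi_k$ and $F_k$ depend on $k$, so the inductive hypothesis must be applied pointwise in $k$ and then integrated via dominated convergence once the $R \to \infty$ limit has been taken.
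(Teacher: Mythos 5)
Your proposal is correct and follows essentially the same route as the paper: reduce to nilcharacters by the approximation property, subtract the mean of the function on $G/\Gamma$, apply a Van der Corput inequality over F\o lner sets in $H$, use the differentiation property to drop the nilcharacter degree and invoke the inductive hypothesis, and kill the remaining matrix-coefficient term via Howe--Moore, with the base case supplied by the assumed equidistribution of $H.x$. The only differences are cosmetic: the paper runs the Van der Corput step with two shift parameters $b_1,b_2$ rather than a single $k=b_1b_2^{-1}$, and it takes the homogeneity of $\overline{H.y}$ as given by the classical structure theory of nilflows rather than citing Ratner.
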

The related joining classification in the case where the group $G$ in Theorem~\ref{thm:non-effective} is nilpotent is due to Lesigne~\cite{lesigne1991} and quantitatively by Green-Tao~~\cite{greentao12}.

While the non-quantitative disjointness theorem, Theorem~\ref{thm:non-effective} follows easily from Ratner's measure classification theorem~\cite[Theorem~$1$]{ratner91} regarding measure classification of measures on homogeneous spaces of general Lie groups which are invariant under actions of subgroups generated by unipotent elements, the proof we give only makes usage of the equidistribution theorem for homogeneous space of semi-simple Lie groups and it is based on analysis of nilsequences developed by Green-Tao-Ziegler~\cite{greentaoziegler12} for characteristic factors of nilflows and follows the lines of the results by Furstenberg, Bourgain and Venkatesh.

Our second main theorem of the paper provides a quantitative version of Dani's horopspherical equidistribution theorem.
In order to state the theorem, we need to define the notion of a diophantine point in the homogeneous space $G/\Gamma$.
\begin{defn}\label{def:alpha-func}
For $m=1,\ldots,k$ and $x\in SL_{k}(\mathbb{R})/SL_{k}(\mathbb{Z})$ which we represent as $x=g\cdot SL_{k}(\mathbb{Z})$, we define the quantity $\alpha_{i}(x)$ as follows:
\begin{equation}\label{eq:alpha-def}
\alpha_{i}(x)=1/\min\left\{\lVert g.v \rVert_{\infty} \mid v\in \largewedge^{i}\left(\mathbb{Z}\right)\setminus\{0\} \right\},
\end{equation}
where the $\infty$-norm of the wedge product is taken with respect to the basis given by $e_{i_1}\wedge\cdots\wedge e_{i_m}$ where $\left\{ e_{i}\right\}$ is the standard basis of $\mathbb{R}^{k}$.
It is easily seen that the definition is independent of the representative matrix $g$ we choose.
Moreover, we define the function $\alpha(x)$ as
\begin{equation*}
\alpha(x)=\max\left\{\alpha_{1}(x),\ldots,\alpha_{k}(x) \right\}.
\end{equation*}
We also define the following subsets of $X=SL_{k}(\mathbb{R})/SL_{k}(\mathbb{Z})$:
\begin{equation*}
\begin{split}
X_{\geq \varepsilon} = \left\{x\in SL_{k}(\mathbb{R})/SL_{k}(\mathbb{Z}) \mid \alpha(x) \leq \varepsilon ^{-1} \right\}, \\
X_{\geq \varepsilon}^{1} = \left\{x\in SL_{k}(\mathbb{R})/SL_{k}(\mathbb{Z}) \mid \alpha_{1}(x) \leq \varepsilon ^{-1} \right\}.
\end{split}
\end{equation*}
In particular, for every $x\in X_{\geq\varepsilon}$ we have that $\lVert x.V \rVert_{\infty} \geq \varepsilon$ for every $V\in \largewedge^{m}(\mathbb{Z})~\setminus~\{0\}$, for every $1\leq m \leq k$.
\end{defn}
We remark that by Mahler's compactness criteria, the family of subsets $\{X_{\geq \varepsilon}^{1}\}$ forms an increasing and exhausting family of compact subsets of $SL_{k}(\mathbb{R})/SL_{k}(\mathbb{Z})$, and we obviously have $X_{\geq\varepsilon}\subset X^{1}_{\geq\varepsilon}$. Using lattice reduction techniques, we also have that $X^{1}_{\geq\varepsilon} \subset X_{\geq C(k)\cdot\varepsilon^{k-1}}$ for some explicit constant $C(k)$.

\begin{defn}\label{def:dioph}
Given $D>0$ a small parameter, diagonalizable one-parameter group $A=\left<a_{t}\right>\leq SL_{k}(\mathbb{R})/SL_{k}(\mathbb{Z})$, a matching subgroup $H\leq~G$ which is contained in the horospherical subgroup associated to the semi-group and a function $\Theta:\mathbb{R}_{\geq 0}\to\mathbb{R}_{\geq 0}$, we say that a point $x\in SL_{k}(\mathbb{R})/SL_{k}(\mathbb{Z})$ is $\Theta$-diophantine (with respect to the given subgroups $A$,$H$ and parameter $D$) if 
\begin{equation*}
    B^{H}_{\Theta(R)}.a_{-\log R}.x \cap X_{\geq R^{-D}} \neq \emptyset, \ \forall R>1.
\end{equation*}
\end{defn}

In view of the uniform Dani-Margulis non-divergence theorem~\cite{dm91}, every point $x\in G/\Gamma$ which is not fixing a rational vector in some projective $\mathbb{Q}$-rational representation of $G$, is $\Theta$-diophantine for some function $\Theta$, for any $D>0$.
Moreover that if $a_{-\log R}.x$ diverges very slowly (with respect to $D$), then trivially the point is $\Theta$-diophantine for $\Theta(R)=1$ say.
We say that $\Theta(T)$ is \emph{polynomially bounded} if there exists some polynomial $p(T)\in\mathbb{Z}[T]$ such that $\Theta(T)\leq \max\{1,p(T)\}$ for every $T\geq 1$.

This definition allows us to control, in a quantitative form, recurrence rates of unipotent orbits to ``almost compact'' sets and is inspired from the definition of the ``diophantine sets'' in~\cite[Definition~$3.6$]{Lindenstrauss2014}, see also \cite[Lemma~$2.6$]{sarnakubis}.
We note here that although images of semisimple and unipotent elements under rational morphisms are indeed semisimple and unipotent respectively~\cite[Theorem~$4.4.4$]{borel2012linear}, this definition \emph{depends on the actual rational embedding which is used}.
We define the \emph{injectivity radius} at a point $x\in G/\Gamma$ as 
\begin{equation*}
InjRad(x)=\sup_{r>0} \left\{x \to g.x \text{ is injective for all }g\in Ball_{G}(r) \right\}
\end{equation*}
where $Ball_{G}(r)$ is the ball of radius $r$ with respect to this metric.
By~\cite[Proposition~$3.5$]{kleinbock-margulis1}, we have the following relationship between injectivity radius and the family $\{X_{\geq \varepsilon}^{1}\}$:
\begin{equation*}
InjRad(x) \geq C \cdot \varepsilon ^{k} \ \ \forall x\in X_{\geq \varepsilon}^{1}.
\end{equation*}

Now we may state a quantitative version of Dani's horospherical equidistribution theorem.
\begin{thm}\label{thm:effective-equi}
Let $G$ be a semi-simple linear group without compact factors, let $\Gamma\leq G$ be a lattice, and denote $X=G/\Gamma$.
Moreover, in the case where $\Gamma$ is non-uniform, we further assume that $G$ is defined over $\mathbb{Q}$ and $\Gamma$ being arithmetic.

Assume that $H\leq G$ is a horospherical subgroup with associated expanding semi-group $A=\left<a_{t}\right>$, and the associated averaging family of subsets $\left\{ B^{H}_{R}\right\}=\left\{a_{\log R}B^{H}_{1}a_{-\log R} \right\}\subset H$.

There is a parameter $D=~D(\Gamma)$ such that if $x$ is $H$-generic, namely $\overline{H.x}=X$ and $\Theta$-diophantine with respect to $A$,$H$,$D$ with $\Theta$ being polynomially bounded then there exists $\gamma_{\text{Equidistribution}}=~\gamma_{\text{Equidistribution}}(\Gamma,\Theta)>~0$ and $K>0$ such that for any smooth and bounded function $f:X\to~\mathbb{C}$ with bounded Sobolev norm of order $K$ and vanishing integral, the following estimate holds:
\begin{equation}\label{eq:quant-equi}
    \left\lvert \frac{1}{\vol_{H}\left(B^{H}_{R}\right)}\int_{u\in B^{H}_{R}}f(u.x)du \right\rvert \ll_{f} \left\lvert \vol_{H}\left(B^{H}_{R}\right)\right\rvert^{-\eta},
\end{equation}
for any $\eta<\gamma_{\text{Equidistribution}}(\Gamma,\Theta).$
\end{thm}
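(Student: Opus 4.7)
The plan is to follow the Margulis thickening / linearization strategy, as refined by Kleinbock-Margulis and Venkatesh. Because $B^H_R=a_{\log R}B^H_1a_{-\log R}$ and the Jacobian of conjugation by $a_{\log R}$ on $H$ equals $R^{d_H}$, a change of variable rewrites the normalized integral as
\begin{equation*}
\frac{1}{\vol_H(B^H_R)}\int_{B^H_R}f(u.x)\,du \;=\; \frac{1}{\vol_H(B^H_1)}\int_{B^H_1}f\bigl(a_{\log R}.u.y\bigr)\,du,\qquad y:=a_{-\log R}.x.
\end{equation*}
The $\Theta$-diophantine hypothesis provides some $h_R\in B^H_{\Theta(R)}$ with $h_R.y\in X_{\geq R^{-D}}$, whose injectivity radius is $\gg R^{-Dk}$ by the Kleinbock-Margulis inequality recalled just before the theorem. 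Substituting $v=uh_R$ and absorbing the resulting polynomial-in-$R$ enlargement of the domain of integration (using that $\Theta$ is polynomially bounded) replaces $y$ by the non-cuspidal point $y':=h_R.y$, at the cost of an error that will be absorbed into the final exponent.

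The next step is to choose $\varepsilon=R^{-\alpha}$ with $\alpha>0$ to be optimized, subject to $\varepsilon\le cR^{-Dk}$ so that a box of radius $\varepsilon$ in a transverse complement $L$ to $\Lie(H)$ inside $\Lie(G)$ fits within the injectivity region around $y'$. Let $\phi_\varepsilon$ be a smooth unit-mass bump on the $\varepsilon$-ball of $L$. Since $f$ has bounded $K$-Sobolev norm, replacing $f(a_{\log R}v.y')$ by $f(a_{\log R}v\exp(\ell).y')$ for $\ell\in\Supp(\phi_\varepsilon)$ costs $O(\varepsilon\cdot\Sob_1(f))$ pointwise. Folding the thickening into an $L^2$ pairing and pulling $a_{\log R}$ over to the function side gives
\begin{equation*}
\frac{1}{\vol_H(B^H_1)}\int_{B^H_1}f(a_{\log R}v.y')\,dv \;=\; \bigl\langle a_{\log R}.f,\Phi_{\varepsilon,y'}\bigr\rangle_{L^2(X)} + O\bigl(\varepsilon\cdot\Sob_1(f)\bigr),
\end{equation*}
where $\Phi_{\varepsilon,y'}$ is the smoothed $\delta$-measure on the horocycle box around $y'$ built from $\phi_\varepsilon$, satisfying $\Sob_K(\Phi_{\varepsilon,y'})\ll \varepsilon^{-(\dim L+K)}$.

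Since $f$ lies in $L^2_0(X)$ and $\Gamma$ has a spectral gap --- Clozel's property $(\tau)$ in the $\Q$-arithmetic case, Kleinbock-Margulis or Katok-Spatzier otherwise --- effective decay of matrix coefficients yields
\begin{equation*}
\bigl|\bigl\langle a_{\log R}.f,\Phi_{\varepsilon,y'}\bigr\rangle\bigr| \;\ll\; R^{-\sigma}\cdot\Sob_K(f)\cdot\Sob_K(\Phi_{\varepsilon,y'}) \;\ll\; R^{-\sigma}\,\varepsilon^{-(\dim L+K)}\,\Sob_K(f)
\end{equation*}
for some $\sigma=\sigma(G,\Gamma)>0$. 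Optimizing $\varepsilon\asymp R^{-\sigma/(\dim L+K+1)}$ produces a bound of the form $R^{-\gamma'}\Sob_K(f)$, equivalent to $\vol_H(B^H_R)^{-\gamma'/d_H}\Sob_K(f)$. The hard part will be to simultaneously satisfy the injectivity bound $\varepsilon\le cR^{-Dk}$ and the optimization $\varepsilon\asymp R^{-\sigma/(\dim L+K+1)}$ while also absorbing the polynomial-in-$R$ displacement loss from $h_R$ into the final exponent; this forces $D=D(\Gamma)$ to be strictly smaller than $\sigma/(k(\dim L+K+1))$ and fixes a correspondingly small $\gamma_{\text{Equidistribution}}(\Gamma,\Theta)>0$, completing the proof.
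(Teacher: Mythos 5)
Your mechanism for the core estimate is genuinely different from the paper's: you run the Margulis/Eskin--McMullen thickening argument, converting the horospherical average into a single matrix coefficient $\langle a_{\log R}.f,\Phi_{\varepsilon,y'}\rangle$ and quoting effective mixing, whereas the paper follows Venkatesh: an effective \emph{mean} ergodic theorem (Chebyshev plus the Harish--Chandra bound, Lemma~\ref{lem:effective-ergodic}) produces a nearby point $y'$ with small average, and the Key Lemma~\ref{lem:key-lemma} (polynomial divergence of the renormalized flow) transfers the bound back to $x$. Both routes are legitimate in the cocompact case and give comparable exponents. One technical caveat in yours: you thicken on the right of $v$, writing $f(a_{\log R}v\exp(\ell).y')$. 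Since $a_{\log R}v\exp(\ell)y'=\exp\bigl(\mathrm{Ad}(a_{\log R}v)\ell\bigr)\,a_{\log R}vy'$ and $\mathrm{Ad}(v)\ell$ generically acquires a component in $\mathfrak{g}^{+}$ that $a_{\log R}$ expands, the pointwise cost is $O(R^{c}\varepsilon)$, not $O(\varepsilon)$. The thickening must sit on the left of $v$ and be taken in $\mathfrak{g}^{0}\oplus\mathfrak{g}^{-}$, so that only $\mathrm{Ad}(a_{\log R})$ acts on it and nothing is expanded. This is standard and fixable.

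The genuine gap is in your treatment of the non-uniform case. The $\Theta$-diophantine hypothesis hands you a \emph{single} element $h_R\in B^{H}_{\Theta(R)}$ with $h_R.y\in X_{\geq R^{-D}}$, and you propose to ``substitute $v=uh_R$'' and absorb the resulting enlargement of the domain. This does not work: the set $B^{H}_{1}h_R^{-1}$ can be entirely disjoint from $B^{H}_{1}$ and occupies a fraction $\asymp\Theta(R)^{-d_H}$ of any ball containing both, so the averages of $f(a_{\log R}\,\cdot\,y)$ over $B^{H}_{1}$ and over $B^{H}_{1}h_R^{-1}$ are averages over far-apart pieces of the orbit with no a priori relation; the discrepancy is $O(\lVert f\rVert_\infty)$, not a loss in the exponent. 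What is actually needed --- and what the paper does --- is to upgrade ``one good point in $B^{H}_{\Theta(R)}.y$'' to ``all but a proportion $O(\Theta(R)^{-\alpha\epsilon})$ of $B^{H}_{\Theta(R)}.y$ lies in $X^{1}_{\geq R^{-D-\epsilon}}$'' via the Kleinbock--Margulis quantitative non-divergence theorem applied to the $(C,\alpha)$-good functions $\psi_{R,\Delta,x_0}$ (Corollary~\ref{cor:inheritaed-diophantine-cond1}; this in turn needs the modified Vitali covering argument of Lemma~\ref{lem:vitali}, since $H$ need not be a Besicovitch space). One then proves the estimate for the \emph{enlarged} ball $B^{H}_{R\Theta(R)}$ by writing it as a double average, applying the compact-case argument at each good base point, and discarding the bad set; polynomial boundedness of $\Theta$ then converts this into the stated bound for $B^{H}_{T}$ with the exponent divided by $p+1$. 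Without this averaging over the good set, your argument only controls the orbit average based at the displaced point $a_{\log R}h_Ra_{-\log R}.x$, not at $x$.
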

Our proof of this theorem, which is based upon quantitative mixing estimates, yields an actual estimate for $\gamma_{\text{Equidistribution}}$.
In the case where $\Gamma$ is a \emph{uniform lattice} and $H$ is an abelian horospherical group we have that
\begin{equation*}
    \gamma_{\text{Equidistribution}}<\frac{2s}{\dim G +2},
\end{equation*}
where $s>0$ is a bound for the decay rate of the matrix coefficient $\left<h.f,f\right>_{L^{2}(X,\mu)}$ for $h\in H$ in the $G$-representation $G\curvearrowright L^{2}_{0}(X,\mu)$, see Theorem~\ref{thm:HC-mixing}.
Recent work recovering a similar result (in a greater generality with respect to function spaces) was done by  McAdam~\cite{mcadam}, where the author also recoveres a quantitative disjointness statement for multiparameter abelian actions and derives a corollary a sparse equidistribution statement based on sieving methods, in the spirit of Venkatesh. The main contributions of the current paper are handling the non-abelian group action case in Theorem~\ref{thm:effective-equi} and generalizing Venkatesh's result to joining with general nilflows, which require the introduction of machinery which originates in higher Fourier analysis.

\begin{rem}\label{rem:sobolev}
In all the quantitative theorems we prove, we assume that $f$ is a bounded function. It is of interest to prove such quantitative estimates for functions which have a specific growth type at the cusp as well, see for example the definition of Sobolev norms of Str\"ombergsson~\cite[Equation~(3), Theorems~$1,2$]{STR13}. Our treatment does recover such a bound (for a class of functions with sufficiently slow growth at the cusp), but we haven't emphasized this in our computations.
\end{rem}



\subsection*{Structure of the paper}
In section~\S\ref{sec:non-effective} we provide the proof of Theorem~\ref{thm:non-effective}.
In section~\S\ref{sec:effective} we prove the quantitative horospherical equidistribution theorem~\ref{thm:effective-equi}.
In section~\S\ref{sec:quant-disjointness} we show how to deduce the quantitative disjointness theorem~\ref{thm:effective-disjointness} from the proof of the qualitative theorem~\ref{thm:non-effective} using a quantitative equidistribution statement.

\begin{acknowledgments}
The research was conducted as part of the author's PhD thesis in the Hebrew University of Jerusalem, under the guidance of Prof. Elon Lindenstrauss.
The author also wishes to thank Prof. Tamar Ziegler for helpful conversations regarding nilcharacthers and explaining the inverse theorem of \mbox{Green-Tao-Ziegler} to him.
Part of this work was done while the author was visiting MSRI during the program ''Analytic Number Theory''. The author wishes to thank MSRI and the organizers for providing excellent working conditions.
The research was supported by ERC grant (AdG Grant 267259) and the ISF (grant 891/15).
\end{acknowledgments}
\section{Proof of Theorem~\ref{thm:non-effective}}\label{sec:non-effective}
In order to show disjointness from a given nilflow $\left(N/\Lambda,H\right)$ , it is enough to show that the ergodic averages of a smooth function with compact support and vanishing integral along a F\o lner sequence, twisted by a nilcharacter defined over the nilflow, converge to $0$.
Now we prove theorem~\ref{thm:non-effective} by induction on the degree of the nilflow $\left(N/\Lambda,T\right)$.
We begin by proving a variant of the Van-der-Corput inequality along F\o lner sequences.
Recall that a sequence $\left\{F_{n}\right\}\subset G$ of a unimodular group $G$ is called a F\o lner sequence if $F_{n}$ is a sequence of measurable subsets of $G$, of finite measure, converging to $G$ which satisfy $\frac{\lvert g.F_{n}\triangle F_{n}\rvert}{\lvert F_{n}\rvert} \to 0$ for any $g\in G$.
We will only consider the case of $G$ being a nilpotent group, hence unimodular group and of polynomial growth, hence there is no ambiguity in the definition of the ratio $\frac{\lvert g.F_{n}\triangle F_{n}\rvert}{\lvert F_{n}\rvert}$.
Moreover, we will only consider monotone F\o lner sequences in this paper.



\begin{lem}
	Assume that $f:X\to \mathbb{C}$ is a Lipschitz continuous and bounded function, where $X$ is some metric space equipped with a continuous $G$-action for some unimodular group $G$. Let $\left\{F_{n}\right\}$ be a F\o lner sequence for $G$ action and suppose we are given a compact subset $B\subset G$ with small doubling, namely $\lvert B^{-1}\cdot B \rvert \leq K\cdot \lvert B \rvert$ for some fixed $K$; then
	\begin{equation}
	\left\lvert \frac{1}{\lvert F_{n} \rvert}\int_{g\in F_{n}}f(g.x)dg \right\rvert \ll \sqrt{\frac{1}{\lvert B \rvert^{2}}\iint_{b_1 , b_2 \in B} \left\lvert \gamma_{f,x}(b_{1},b_{2})\right\rvert db_1 db_2}+
	2\cdot \lVert f \rVert_{\infty}\cdot \left(\frac{\sup_{b\in B}\lvert b.F_{n} \triangle F_{n}\rvert}{\lvert F_{n}\rvert}\right)
	\label{eq:vdc-pre}
	\end{equation}
	where $\gamma_{f,x}(b_1 , b_2)$ stands for the differentiated term -
	\begin{equation*}
	\gamma_{f,x}(b_{1},b_{2}) = \frac{1}{\lvert F_n \rvert}\int_{g\in F_{n}}f(b_{1}.g.x)\overline{f(b_{2}.g.x)}dg.
	\end{equation*}
\end{lem}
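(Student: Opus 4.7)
The argument is a standard Van-der-Corput style manipulation adapted to the group setting. Let me outline the steps.

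First, I would exploit unimodularity to pass from $f(g.x)$ to $f(bg.x)$ on a single Følner set. For each $b\in B$, the substitution $g\mapsto b^{-1}g$ preserves Haar measure, so
\[
\int_{F_n}f(bg.x)\,dg=\int_{bF_n}f(g.x)\,dg,
\]
and consequently the difference
\[
\left|\,\frac{1}{|F_n|}\int_{F_n}f(g.x)\,dg-\frac{1}{|F_n|}\int_{F_n}f(bg.x)\,dg\,\right|\le \|f\|_{\infty}\cdot\frac{|bF_n\triangle F_n|}{|F_n|}.
\]
Averaging this estimate over $b\in B$ and taking the sup over $b$ in the symmetric difference term reduces the problem, up to the stated error, to bounding
\[
\left|\,\frac{1}{|B|}\int_{b\in B}\frac{1}{|F_n|}\int_{g\in F_n}f(bg.x)\,dg\,db\,\right|.
\]

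Next I would swap the order of integration (both integrals are over finite-measure sets so Fubini applies) and apply Cauchy--Schwarz in the $F_n$-variable against the constant function $1$:
\[
\left|\,\frac{1}{|F_n|}\int_{F_n}\Big(\frac{1}{|B|}\int_{B}f(bg.x)\,db\Big)\,dg\,\right|
\le \sqrt{\frac{1}{|F_n|}\int_{F_n}\left|\frac{1}{|B|}\int_{B}f(bg.x)\,db\right|^{2}\!dg}.
\]
Expanding the modulus-squared as a double integral over $B\times B$, interchanging once more with the $F_n$-integral, and recognizing the inner integrand yields
\[
\frac{1}{|F_n|}\int_{F_n}\left|\frac{1}{|B|}\int_{B}f(bg.x)\,db\right|^{2}\!dg=\frac{1}{|B|^{2}}\iint_{B\times B}\gamma_{f,x}(b_{1},b_{2})\,db_{1}\,db_{2}.
\]
Since the left-hand side is real and nonnegative, bounding the right-hand side by $|B|^{-2}\iint |\gamma_{f,x}(b_1,b_2)|\,db_1\,db_2$ before taking the square root gives the main term of \eqref{eq:vdc-pre}.

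Combining the two estimates via the triangle inequality produces the claimed bound; the coefficient $2$ absorbs any slack from bounding $|bF_n\triangle F_n|$ symmetrically in $b$ versus $b^{-1}$ and from replacing $\gamma_{f,x}$ by its modulus. There is no real obstacle here: the only point requiring care is the correct use of unimodularity when translating the Følner error into a bound uniform over $b\in B$, and ensuring that the Cauchy--Schwarz step is applied with the normalized measure on $F_n$ so that the factor $1/|F_n|$ appears correctly inside the square root. The small-doubling hypothesis on $B$ is not needed for the lemma itself; it will enter only when the lemma is later applied and $\gamma_{f,x}(b_1,b_2)$ is reparametrized through $b_1b_2^{-1}\in B B^{-1}$.
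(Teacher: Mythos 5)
Your proposal is correct and follows essentially the same route as the paper: translate by $b$, bound the discrepancy by the F\o lner symmetric-difference term, average over $b\in B$, and then apply Cauchy--Schwarz with the normalized measure on $F_n$ and expand the square into the double integral of $\gamma_{f,x}$. Your closing observation that the small-doubling hypothesis is not actually used in the proof of the lemma itself is also accurate.
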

\begin{proof}
    For a fixed element $b\in B$, one gets the following inequality
    \begin{equation}
    \begin{split}
        \left\lvert \frac{1}{\lvert F_{n}\rvert}\int_{g\in F_{n}}f(g.x)dg -\frac{1}{\lvert F_{n}\rvert}\int_{g\in F_{n}}f(b.g.x)dg  \right\rvert &\leq \frac{2}{\lvert F_{n} \rvert}\int_{g\in b.F_{n}\triangle F_{n}}\lvert f(g.x)\rvert dg \\
        &\leq 2\lVert f \rVert_{\infty}\frac{\lvert b.F_{n}\triangle F_{n}\rvert}{\lvert F_{n} \rvert}.
    \end{split}
    \end{equation}
    Now instead of considering a fixed element $b\in B$ one may average over all $b\in B$ in order to deduce
    \begin{equation*}
        \begin{split}
         \left\lvert \frac{1}{\lvert F_{n}\rvert}\int_{g\in F_{n}}f(g.x)dg -\frac{1}{\lvert F_{n}\rvert}\int_{g\in F_{n}}\frac{1}{\lvert B \rvert}\int_{b\in B}f(b.g.x)dbdg  \right\rvert &\leq \frac{2}{\lvert F_{n} \rvert}\frac{1}{\lvert B \rvert}\int_{b\in B}\int_{g\in b.F_{n}\triangle F_{n}}\lvert f(g.x)\rvert dgdb \\
        &\leq 2\lVert f \rVert_{\infty}\frac{1}{\lvert B \rvert}\int_{b\in B}\frac{\lvert b.F_{n}\triangle F_{n}\rvert}{\lvert F_{n} \rvert}db.   
        \end{split}
    \end{equation*}
    Bounding trivially the integral we have that
    \begin{equation}
    \begin{split}
         \left\lvert \frac{1}{\lvert F_{n}\rvert}\int_{g\in F_{n}}f(g.x)dg\right\rvert \leq \left\lvert \frac{1}{\lvert F_{n}\rvert}\int_{g\in F_{n}}\frac{1}{\lvert B \rvert}\int_{b\in B}f(b.g.x)dbdg  \right\rvert +  2\lVert f \rVert_{\infty}\frac{\sup_{b\in B}\lvert b.F_{n} \triangle F_{n}\rvert}{\lvert F_{n}\rvert}. 
        \end{split}    
    \end{equation}
	Using the Cauchy-Schwarz inequality for the expression $\frac{1}{\lvert F_{n}\rvert}\int_{g\in F_{n}}\frac{1}{\lvert B \rvert}\int_{b\in B}f(b.g.x)dbdg$ gives rise to the required inequality.
\end{proof}
Now we are in position to prove the main result of this section.
\begin{proof}[Proof of Theorem~\ref{thm:non-effective}]
By the approximation property, it is enough to prove Theorem~\ref{thm:non-effective} in the case where $f_{2}(g.y)=\psi(g.y)$ - a nilcharacter defined over $N/\Lambda$.
We fix some compact neighborhood $B^{H}_{1}$ of the identity $e\in H$, and we will take $\vol(B)\to\infty$ in the end, where $\vol$ stands for the Haar measure of $H$. We denote $F_{n}=B_{n}^{H}$ the sets $B_{n}^{H}=a_{\log n}B^{H}_{1}a_{-\log n}$ as the F\o lner sequence we use in the proof.
Moreover, by subtracting $\int_{X}f_{1}(x)d\mu(x)$, we may and will assume that $f_{1}$ is a function of vanishing integral. 

Using the Van-der-Corput type inequality~\eqref{eq:vdc-pre}, we may bound the twisted ergodic average in the following manner -
\begin{equation}
\left\lvert \frac{1}{\lvert F_n \rvert}\int_{g\in F_{n}}\psi(g.y)f_1(g.x)dg \right\rvert^{2} \ll \frac{1}{\lvert B \rvert^{2}}\iint_{b_1 , b_2\in B}\left\lvert \gamma_{f}(F_n,b_1,b_2) \right\rvert db_1 db_2+O_{f_1,\psi}\left(\frac{\sup_{b\in B}\lvert b.F_{n} \triangle F_{n}\rvert}{\lvert F_{n}\rvert} \right),
\label{eq:multidim-vdc}
\end{equation}
where $\gamma_{f}(F_n ,b_1,b_2)$ stands for the ''differentiated'' expression -
\begin{equation*}
\gamma_{f}(F_n ,b_1 , b_2)=\frac{1}{\lvert F_n \rvert}\int_{v\in F_n}\psi(b_1.v.y)\overline{\psi(b_2.v.y)}f_1(b_1.v.x)\overline{f_1(b_2.v.x)}dv.
\end{equation*}
By the differentiation property of nilcharacters, the ''differentiated'' nilcharacter $\psi(b.\cdot)\cdot\overline{\psi(\cdot)}$ is a nilsequence of degree less than $\psi$, and by the approximation property, can be uniformly approximated by a combination nilcharacters of degree strictly smaller than $\psi$, hence by induction on the degree of the nilsequence we have that
\begin{equation}\label{eq:base-case}
    \frac{1}{\lvert F_{n} \rvert}\int_{v\in F_{n}}\psi(b_1.v.y)\overline{\psi(b_2v.y)}\left(f_1(b_1.v.x)\cdot\overline{f_1(b_2.v.x)}-\rho_{f_1,f_1}(b_1 \cdot b_2 ^{-1})\right)dv \to 0,
\end{equation}
as $F_{n}\to H$, for any fixed $b_{1},b_{2}\in B$, where $\rho_{f_1,f_2}(g)$ stands for the matrix coefficient which is defined by $f_{1},f_{2}$ in the unitary representation of $G$ on $L_{0}^{2}(G/\Gamma)$, namely
\begin{equation*}
\rho_{f_{1},f_{2}}(g)=\int_{X}f_{1}(g.x)\overline{f_{2}(x)}d\mu(x).
\end{equation*}
As $B$ is compact, and the dependence of~\eqref{eq:base-case} in $b_{1},b_{2}$ is continuous, we have that
\begin{equation}
\frac{1}{\lvert B \rvert^{2} }\iint_{b_1 , B_2 \in B} \frac{1}{\lvert F_{n} \rvert}\int_{v\in F_{n}}\psi(b_1.v.y)\overline{\psi(b_2v.y)}\left(f_1(b_1.v.x)\cdot\overline{f_1(b_2.v.x)}-\rho_{f_1,f_1}(b_1 \cdot b_2 ^{-1})\right)dv \to 0,
\end{equation}
as $F_{n} \to H$, uniformly over $b_{1},b_{2}\in B$ due to compactness.
Hence we deduce the following bound:

\begin{equation}\label{eq:main-non-eff-estimate}
\begin{split}
&\left\lvert\frac{1}{\lvert F_n \rvert}\int_{v \in F_n}\psi(v.y)f_1(v.x)dv \right\rvert^{2} \\
&\ll \frac{1}{\lvert B \rvert^{2}}\iint_{b_1 , b_2\in B}\bigg\lvert\frac{1}{\lvert F_n \rvert}\int_{v\in F_n}\psi(b_1.v.y)\overline{\psi(b_2 .v.y)} \\
&{} \ \ \ \ \ \cdot\left(f_1(b_1.v.x)\cdot\overline{f_1(b_2.v.x)}-\rho_{f_1,f_1}(b_1\cdot b_2^{-1})\right)dv \bigg\rvert db_1 db_2 \\
&+ \frac{1}{\lvert B \rvert^{2}}\iint_{b_1 , b_2\in B}\left\lvert \frac{1}{\lvert F_n \rvert}\int_{v\in F_n}\psi(b_1.v.y)\overline{\psi(b_2.v.y)}\rho_{f_1,f_1}(b_1 \cdot b_2^{-1})dv\right\rvert db_1 db_2 \\
&+O_{f_1,\psi}\left(\frac{\sup_{b\in B}\lvert b.F_{n} \triangle F_{n}\rvert}{\lvert F_{n}\rvert}\right)^2 \\
&\ll  \frac{1}{\lvert B\rvert^{2}}\iint_{b_1 , b_2\in B}\bigg\lvert \frac{1}{\lvert F_n \rvert}\int_{v\in F_n} \psi(b_1.v.y)\overline{\psi(b_2.v.y)}\\ 
&{} \ \ \ \ \ \cdot\left(f_1(b_1.v.x)\cdot\overline{f_1(b_2.v.x)}-\rho_{f_1,f_1}(b_1\cdot b_2 ^{-1})\right)dv \bigg\rvert db_1 db_2 \\
&+  \frac{\|\psi\|_{\infty}^{2}}{\lvert B \rvert^{2}}\iint_{b_1,b_{2}\in B}\left\lvert\rho_{f_1,f_1}(b_{1}\cdot b_{2}^{-1})\right\rvert db_1 db_2\\
&+O_{f_1,\psi}\left(\frac{\sup_{b\in B}\lvert b.F_{n} \triangle F_{n}\rvert}{\lvert F_{n}\rvert}\right)^2.
\end{split}
\end{equation}
The first summand converges to $0$, as $n\to \infty$ by induction hypothesis.
For the second summand, by the Howe-Moore theorem~\cite[Theorems~$5.2$,$6.1$]{howemoore79}, we have that $\rho_{f,f}(b) \to 0$ as $b\to\infty$ in $G$, therefore the Cesaro average $\frac{1}{\lvert B \rvert^{2}}\iint_{b_1, b_{2}\in B}\left\lvert\rho_{f,f}(b_1 \cdot b_{2}^{-1})\right\rvert$ tends to $0$ as well, as this follows from say projecting the Cesaro average to the abelianization of $H$. The third summand converges to $0$ as $n\to\infty$ by the assumption that $\left\{F_{n}\right\}$ is a F\o lner sequence.
\end{proof}

\section{Proof of the quantitative horospherical equidistribution theorem}\label{sec:effective}
In order to deduce an effective version of Theorem~\ref{thm:non-effective}, we need to control the three summands in~\eqref{eq:main-non-eff-estimate}.
We start by presenting the following estimate regarding the F\o lner sequence decay.
\begin{prop}
For $B^{H}_{R}=a_{\log R}B^{H}_{1}a_{-\log R}$, we have the following estimate for and $y\in B^{H}_{b}$ for $b<R$:
\begin{equation*}
    \frac{\lvert b.B^{H}_{R}\triangle B^{H}_{R}\rvert}{\lvert B^{H}_{R}\rvert} \ll \frac{\lVert \log(b) \rVert}{R},
\end{equation*}
\end{prop}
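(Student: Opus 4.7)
The plan is to reduce the problem to a symmetric-difference computation on the fixed unit ball $B^H_1$ via the self-similarity $B^H_R = a_{\log R} B^H_1 a_{-\log R}$.

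First I would write
\begin{equation*}
b \cdot B^H_R \;=\; a_{\log R} \cdot \bigl( b' \cdot B^H_1 \bigr) \cdot a_{-\log R}, \qquad b' := a_{-\log R}\, b\, a_{\log R}.
\end{equation*}
Conjugation by $a_{\log R}$ multiplies Haar measure on $H$ by the factor $R^{d_H}$ (the same Jacobian computation as in \eqref{eq:ball-vol}, applied to an arbitrary measurable subset of $H$). Hence the factors cancel in numerator and denominator and the ratio reduces to
\begin{equation*}
\frac{\lvert b \cdot B^H_R \triangle B^H_R \rvert}{\lvert B^H_R \rvert} \;=\; \frac{\lvert b' \cdot B^H_1 \triangle B^H_1 \rvert}{\lvert B^H_1 \rvert}.
\end{equation*}

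Second, I would estimate how close $b'$ is to the identity. Because $H$ is horospherical with respect to $a_t$, every eigenvalue of $\Ad(a_1)$ on $\Lie(H)$ is strictly positive, so $\Ad(a_{-\log R})$ contracts $\log b$ by at least the factor $R^{-1}$ in the metric normalization used in the paper (where the smallest weight is $1$). This gives $\lVert \log b' \rVert \ll \lVert \log b \rVert / R$.

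Third, I would bound $\lvert b' \cdot B^H_1 \triangle B^H_1 \rvert$ geometrically. Since the Riemannian metric on $G$ is right-invariant, one has $d(b' g, g) = d(b', e) \asymp \lVert \log b' \rVert$ uniformly in $g \in H$ (by the local bi-Lipschitz property of $\exp$). Consequently $b' \cdot B^H_1 \triangle B^H_1$ is contained in the tubular neighborhood of $\partial B^H_1$ of thickness $O(\lVert \log b' \rVert)$, whose volume is $O(\lVert \log b' \rVert)$ by smoothness and boundedness of the unit sphere in $H$. Stringing together the three steps yields the claimed bound $\ll \lVert \log b \rVert / R$.

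The main delicate point, though not a serious obstacle, is in Step 3: one has to use the \emph{right}-invariance of the metric so that $d(b'g, g)$ becomes independent of $g \in B^H_1$, and one must check enough regularity of $\partial B^H_1$ to justify the linear-in-$\lVert \log b' \rVert$ tubular-neighborhood estimate. In Step 2, a careful accounting of the eigenvalues of $\Ad(a_1)\vert_{\Lie(H)}$ is needed to justify the explicit power of $R$ that appears (rather than just some fixed positive power depending on the minimal weight).
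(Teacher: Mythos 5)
Your proof is correct and follows essentially the same route as the paper: both renormalize by conjugating with $a_{\log R}$ so that the perturbing element becomes $O(\lVert \log b\rVert /R)$-close to the identity, and both then bound the symmetric difference by a thin shell around $B^{H}_{1}$ of volume $O(\lVert \log b\rVert /R)$. The only cosmetic difference is that the paper realizes the shell as the $a_{\log R}$-conjugated annulus $B^{H}_{1+O(\delta)}\setminus B^{H}_{1-O(\delta)}$ and invokes the volume power law \eqref{eq:ball-vol}, whereas you use a metric tubular neighborhood of $\partial B^{H}_{1}$; both give the same linear-in-$\delta$ bound.
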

where $\lVert \cdot \rVert$ is a fixed norm on $Lie(G)$, which we choose for convenience to be the infinity norm with respect to a fixed basis.
\begin{proof}
Let $b\in B_{b}^{H},x\in B_{R}^{H}$ be given, we may write $b=\exp(\underline{b}),$ $x=~a_{\log R}\exp(\underline{x})a_{-\log R}$ for some $\underline{b}\in \log(B_{b}^{H}), \underline{x}\in \log(B_{1}^{H})$.
We note here that we have the following equality
\begin{equation*}
\begin{split}
    b.x &= \exp(\underline{b}).a_{\log R}\exp(\underline{x})a_{-\log R} \\
    &= a_{\log R}\exp(\underline{b}/R)\exp(\underline{x})a_{-\log R}.
\end{split}
\end{equation*}
Using the Baker-Campbell-Hausdorff formula (noticing it terminates after finitely-many terms, as all calculations are done inside the nilpotent group $H$), we see that 
$$ \log(\exp(\underline{b}/R)\exp(\underline{x})) = \underline{b}/R+\underline{x}+ \text{higher order terms}, $$
where the (finitely many) higher order terms are composed of commutators of $\underline{x},\underline{b}/R$.
It is evident that the commutators are of norm of at-most $O(\lVert \underline{b}/r\rVert)$ for an absolute constant which depends on the group, hence 
$$ b.B^{H}_{R}\triangle B^{H}_{R} \subset  a_{\log R}\left( B^{H}_{1+O(\lVert \underline{b}/r\rVert)}\setminus B^{H}_{1-O(\lVert \underline{b}/r\rVert)}\right)a_{-\log R}. $$
Using the volume formula for the balls $\left\{B^{H}_{R}\right\}$ in~\eqref{eq:ball-vol}, we see that this volume is bounded by $2^{d_{H}}\cdot 2\cdot O(\lVert b \rVert) \cdot R^{d_{H}-1}\cdot \vol_{H}(B^{H}_{1})$.
Dividing by $\vol_{H}(B^{H}_{R})$ we get the desired estimate.
\end{proof}

We remark here that in the case of changing the F\o lner sequence to a more general sequence of sets, such as metric balls, one can still recover such a boundary type estimate (with worse exponent) by using the results of R. Tessera~\cite[Theorem~$4$]{Tessera2007}.

The second summand is controlled by quantitative estimates about the decay of matrix coefficient for semisimple Lie groups:
\begin{thm}\label{thm:HC-mixing}[Harish-Chandra bound,~\cite{HC58} Theorem~$3$, \cite{GV88} Theorem~$4.6.4$, \cite{Haagerup1988} Theorem~$2$, \cite{Katok1994} Theorem~$3.1$, \cite{oh2002} Theorem~$1.1$] Let $G$ be a semisimple Lie group without compact factors, given $\Gamma\leq G$ a lattice, there exists some $s'=s'(\Gamma)>0$ for which
\begin{equation}\label{eq:HC-bound}
\left\lvert \int_{G/\Gamma}g.f_{1}(x)\cdot \overline{f_{2}(x)}d\mu(x) \right\vert \ll \|\underline{g}\|^{-s'}Sob(f_{1})Sob(f_{2})
\end{equation}
for $g=\exp(\underline{g})$, for some Sobolev norms of $f_1,f_{2}$, for any two smooth and bounded functions $f_{1},f_{2}$ with vanishing integrals and $\lVert g \rVert$ is a fixed norm on $Lie(G)$.
\end{thm}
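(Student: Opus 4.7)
The plan is to combine Harish-Chandra's classical bound on matrix coefficients of $K$-finite vectors in irreducible unitary representations of $G$ with a uniform spectral gap for the representation $G \curvearrowright L^2_0(G/\Gamma)$, and then pass from $K$-finite to general smooth vectors by a Sobolev truncation argument. This is exactly the strategy in the cited references; the outline below is what I would write if I had to reproduce the argument.

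First, fix a maximal compact subgroup $K \leq G$ and decompose each $f_i \in L^2_0(G/\Gamma)$ into $K$-isotypic components, $f_i = \sum_\tau f_{i,\tau}$. Using elliptic regularity for the Casimir of $K$, one has the standard estimate $\|f_{i,\tau}\|_{L^2} \ll \dim(\tau)^{-N}\, Sob_{N'}(f_i)$ for any $N$, with $N' = N'(N, \dim G)$. This allows truncation to the finite set of $K$-types of dimension at most $D$, with an error that is rapidly small in $D$ and polynomial in a suitable Sobolev norm of $f_i$.

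Second, for $K$-finite vectors $v, w$ inside an irreducible unitary representation $\pi$ of $G$ appearing in $L^2_0(G/\Gamma)$, Harish-Chandra's bound gives
\[
|\langle \pi(g) v, w\rangle| \leq C\,(\dim \tau_v \cdot \dim \tau_w)^{1/2}\, \Xi(g)^{\alpha(\pi)}\, \|v\|\,\|w\|,
\]
where $\Xi$ is Harish-Chandra's spherical function on $G$ and $\alpha(\pi) > 0$ quantifies how far $\pi$ is from being tempered. A uniform spectral gap---property (T) in higher rank, or Selberg--Kim--Sarnak / Cowling--Haagerup type results in rank one---supplies a uniform lower bound $\alpha(\pi) \geq \alpha_0(\Gamma) > 0$ across all irreducibles occurring in $L^2_0(G/\Gamma)$. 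Combined with the standard estimate $\Xi(g) \ll \|g\|^{-s_0}$ on the positive Weyl chamber (extended to general $g$ via Cartan decomposition), this yields a bound of the form $\|g\|^{-\alpha_0 s_0}$ on each truncated matrix coefficient, up to a polynomial factor in $D$.

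Third, optimize by choosing the truncation scale $D$ as a small positive power of $\|g\|$, balancing the polynomial $D$-dependence arising from Harish-Chandra's bound against the rapid Sobolev-controlled $K$-type tail. This produces a single polynomial rate $s' = s'(\Gamma) > 0$ and completes the proof. The main obstacle, and the step requiring the most care, is the bookkeeping: one must ensure that the dependence of Harish-Chandra's constant on $(\dim\tau_v, \dim\tau_w)$ and on $\|v\|, \|w\|$, when combined with the Sobolev expansion of $f_1, f_2$, yields a single explicit Sobolev order $N'$ rather than an order that degrades with $\|g\|$. This is handled by standard estimates relating smooth vectors in the differentiable representation to their $K$-type expansions via the Casimir operator.
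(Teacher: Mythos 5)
The paper does not actually prove this theorem: it is quoted with references (Harish-Chandra, Gangolli--Varadarajan, Cowling--Haagerup--Howe, Katok--Spatzier, Oh), the only addition being the remark that Harish-Chandra's multiplicity bound lets one take Sobolev norms of order $2(\dim K+1)$. Your outline is a correct reconstruction of the standard argument underlying those references---$K$-isotypic truncation controlled by the Casimir of $K$, the Harish-Chandra/Cowling--Haagerup--Howe bound $\Xi(g)^{\alpha}$ for $K$-finite vectors combined with the spectral gap of $L^{2}_{0}(G/\Gamma)$ (the sole source of the $\Gamma$-dependence of $s'$), and optimization of the truncation parameter---so it matches the approach the paper implicitly relies on.
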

Using Harish-Chandra's multiplicity bound, one may take Sobolev norms of $2(\dim K +1)$ order, where $K\leq G$ is the maximal compact subgroup of $G$.
A direct corollary of this bound for sets of the form $B=B^{H}_{R}$ is that $\frac{1}{\lvert B \rvert}\iint_{ b \in B^{-1}\cdot B}\rho_{f,f}(b)$ is bounded by $\lvert B \rvert^{-s'}$ for some explicitly computable $s'=s'(\Gamma)$ up to constants which depend on the regularity of $f$.

Handling the first summand in~\eqref{eq:main-non-eff-estimate}, requires a derivation of an effective version of a horospherical equidistribution theorem by Dani~\cite[Theorem $A$]{dani1986}, in order to make the base case of the induction effective.
We start by proving a quantitative version of the pointwise ergodic theorem, afterwards we will prove a quantitative version of the disjointness theorem by means of induction over the degree of the nilcharacater.
In this section, we derive a quantitative version for the pointwise ergodic theorem, where the induction will be carried in the next section.


We start by introducing the following lemma which serves as an approximate mean ergodic theorem.
\begin{lem}\label{lem:effective-ergodic}
Given a smooth compactly-supported function $f$ with vanishing integral, the following measure estimate holds:
\begin{equation*}
\mu\left(\left\{x\in G/\Gamma : \left\lvert\frac{1}{\vol_{H}\left(B^{H}_{R}\right)}\int_{u\in B^{H}_{R}}f(u.x)du \right\rvert \geq \vol_{H}\left(B^{H}_{R}\right)^{-\delta} \right\} \right) \ll_{f} \vol_{H}\left(B^{H}_{R}\right)^{2\delta-2s'},
\end{equation*}
for some explicit $s'=s'(\Gamma)$.
\end{lem}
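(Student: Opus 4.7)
The plan is to apply the classical second-moment method: combine Chebyshev's inequality with the quantitative matrix-coefficient decay of Theorem~\ref{thm:HC-mixing}.  Set $A_R f(x) = \vol_H(B^H_R)^{-1}\int_{B^H_R} f(u.x)\,du$.  Chebyshev applied to $|A_R f|^2$ yields
\begin{equation*}
\mu\!\left(\left\{x\in G/\Gamma : |A_R f(x)| \ge \vol_H(B^H_R)^{-\delta}\right\}\right) \le \vol_H(B^H_R)^{2\delta}\,\|A_R f\|_{L^2(X,\mu)}^2,
\end{equation*}
so the task reduces to the variance estimate $\|A_R f\|_{L^2}^2 \ll_f \vol_H(B^H_R)^{-2s'}$.

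To bound the variance, expand the $L^2$ norm via Fubini and the unitarity of the $G$-action on $L^2(X,\mu)$:
\begin{equation*}
\|A_R f\|_{L^2}^2 = \frac{1}{\vol_H(B^H_R)^2}\iint_{B^H_R\times B^H_R} \rho_{f,f}(u_2^{-1}u_1)\,du_1\,du_2,
\end{equation*}
where $\rho_{f,f}(g) = \int_X f(g.x)\overline{f(x)}\,d\mu(x)$ is the matrix coefficient appearing in Theorem~\ref{thm:HC-mixing}, which gives $|\rho_{f,f}(g)| \ll \|\log g\|^{-s'}\cdot \mathrm{Sob}_K(f)^2$ for the appropriate Sobolev order $K$.

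Next I would change variables via $v = u_2^{-1}u_1$ for fixed $u_2\in B^H_R$: the set $u_2^{-1}B^H_R$ lies in $(B^H_R)^{-1}B^H_R\subset B^H_{CR}$ for some explicit $C>0$, so the double integral is at most $\vol_H(B^H_R)\int_{B^H_{CR}}\|\log v\|^{-s'}\,dv$.  Decomposing this range into dyadic shells $B^H_{2^{k+1}}\setminus B^H_{2^k}$, whose $\|\log\cdot\|$-scale and volume are both controlled by the dilation $B^H_R = a_{\log R}B^H_1 a_{-\log R}$ and the volume formula~\eqref{eq:ball-vol}, the inner integral is of size $\vol_H(B^H_R)^{1-c\,s'}$ for an explicit $c>0$ depending only on the weight decomposition of $\Lie(H)$ under $\Ad(a_t)$.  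Dividing by $\vol_H(B^H_R)^2$ and rebranding the effective exponent as $2s'$ produces the claimed variance bound, and Chebyshev concludes the proof.

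The main obstacle is this last integration: different weight spaces of $\Lie(H)$ under $\Ad(a_t)$ dilate at distinct rates, so the Harish--Chandra bound $\|\log v\|^{-s'}$ translates into different $R$-decay rates along each weight.  One must use~\eqref{eq:ball-vol} (which aggregates those rates into the single exponent $d_H$) to balance contributions, and this balancing is what pins down the relationship between the Harish--Chandra exponent and the $s'$ appearing in the lemma statement.  A minor related subtlety is that $f$ is only assumed smooth and compactly supported, so the $\mathrm{Sob}_K$-norm that arises from Theorem~\ref{thm:HC-mixing} must be absorbed into the implicit $\ll_f$ constant, which is harmless here since $R$ is the only variable.
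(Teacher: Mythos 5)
Your proposal is correct and follows essentially the same route as the paper: Chebyshev's inequality reduces the claim to a second-moment bound, the variance is expanded into an average of matrix coefficients over $(B^H_R)^{-1}B^H_R$, and the Harish--Chandra decay of $\rho_{f,f}$ then gives the required power saving (the paper compresses your dyadic-shell integration into a one-line appeal to convexity and \eqref{eq:HC-bound}, and, as you note, the exponent $s'$ in the lemma is simply whatever effective rate survives this averaging). The only point worth keeping in mind is that $\|\log v\|^{-s'}$ is singular near the identity, so on a bounded neighborhood of $e$ one should use the trivial bound $|\rho_{f,f}|\le\|f\|_{L^2}^2$; that region contributes $O(\vol_H(B^H_R)^{-1})$ after normalization and is harmless.
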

We note here that the $f$-dependence is the same one as given in the mixing rate bound~\eqref{eq:HC-bound}.
\begin{proof}
By Chebyshev's inequality we obtain that
\begin{align*}
&\mu\left(\left\{x\in G/\Gamma :\left\lvert\frac{1}{\vol_{H}\left(B^{H}_{R}\right)}\int_{u \in B^{H}_{R}}f(u.x)du \right\rvert \geq \vol_{H}\left(B^{H}_{R}\right)^{-\delta} \right\} \right) &\\
&\leq \vol_{H}\left(B^{H}_{R}\right)^{2\delta}\left\|\frac{1}{\vol_{H}\left(B^{H}_{R}\right)}\int_{u \in B^{H}_{R}}f(u.x)du \right\|_{L^2(G/\Gamma)}^{2}.
\end{align*}
Expanding $\left\|\frac{1}{\vol_{H}\left(B^{H}_{R}\right)}\int_{u\in B^{H}_{R}}f(u.x)du \right\|_{L^2(G/\Gamma)}^{2}$ according to the $L^{2}$ inner-product we have the following -
\begin{equation*}
\begin{split}
&\left\|\frac{1}{\vol_{H}\left(B^{H}_{R}\right)}\int_{u\in B^{H}_{R}}f(u.x)du \right\|_{L^2(G/\Gamma)}^{2} = \int_{w\in \left(B^{H}_{R}\right)^{-1}\cdot\left(B^{H}_{R}\right)\setminus\left\{e\right\}}\lvert\rho_{f,f}(w)\rvert d\nu^{H}_{R}(w),
\end{split}
\end{equation*}
where $\nu^{H}_{R}$ stands for the (left-)convolution measure on $H$ defined by
\begin{equation*}
\nu^{H}_{R} = \vol_{H}\mid_{B^{H}_{R}} * \vol_{H}\mid_{B^{H}_{R}},
\end{equation*}
and $\vol_{H}\mid_{B^{H}_{R}}$ is the normalized measure achieved by restricting and normalizing the Haar measure defined on $H$ to $B^{H}_{R}$.
Similarly to before, by convexity and the Harish-Chandra bound~\eqref{eq:HC-bound} for the matrix coefficients, the last term is bounded by $O_{f}\left(\vol_{H}\left(B^{H}_{R}\right)^{-s'}\right)$.
\end{proof}
The next lemma quantifies the polynomial divergence of nearby points, namely it gives quantitative control over the deviation of the ergodic averages sampled along the trajectories of two nearby origin points.
In the proceeding lemma, $R$ should be thought of a fixed large constant, and $\delta<1$ is a small number.
\begin{lem}[Key Lemma] Let $A=\left\langle a_{t}\right\rangle\leq G$ be a one-parameter subgroup of $G$ such that $H$ is horospherical with respect to $A$. Assume that $dist(a_{-\log{R}}.x,a_{-\log{R}}.y)<\delta$ for some $x,y\in G/\Gamma$, for any $\delta<1$, then
\begin{equation}
\left\lvert \frac{1}{\vol_{H}\left(B^{H}_{R}\right)}\int_{u\in B^{H}_{R}}f(u.x)-f(u.y)du \right\rvert \leq_{f} \delta.
\label{eq:key-lemma}
\end{equation}
\label{lem:key-lemma}\end{lem}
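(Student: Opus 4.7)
The plan is to scale the problem down to the unit ball $B^H_1$ using the conjugation relation $B^H_R = a_{\log R}B^H_1 a_{-\log R}$, and then to absorb the expanding $\Lie(H)$-component of the displacement from $x$ to $y$ into a small shift of the integration variable, exploiting the F\o{}lner estimate established in the preceding Proposition. First, substituting $u = a_{\log R} v a_{-\log R}$ (with Jacobian $R^{d_H}$) in both integrals and using $\vol_H(B^H_R) = R^{d_H}\vol_H(B^H_1)$ rewrites the difference of averages as
\begin{equation*}
\frac{1}{\vol_H(B^H_1)} \int_{v\in B^H_1}\bigl[f(a_{\log R} v.x') - f(a_{\log R} v.y')\bigr]\,dv,
\end{equation*}
where $x' := a_{-\log R}.x$ and $y' := a_{-\log R}.y$. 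By hypothesis $\dist(x',y')<\delta$, so we write $y'=\exp(\xi).x'$ with $\|\xi\|<\delta$ in the fixed norm on $\Lie(G)$.

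A direct Lipschitz bound is insufficient: the displacement of $a_{\log R}v.x'$ induced by $\exp(\xi)$ equals $\exp(\Ad_{a_{\log R}v}\,\xi)$, and $\Ad_{a_{\log R}}$ expands the $\Lie(H)$-component by a power of $R$, producing only an $R^{c}\delta$ bound for some $c>0$. The remedy is to absorb this enlarged $\Lie(H)$-part into an $O(\delta)$ shift of $v$. Concretely, I would decompose $\Lie(G) = \Lie(H)\oplus\Lie(C)\oplus\Lie(H^-)$ by $A$-weights (with $\Lie(C)$ the centralizer of $A$ and $\Lie(H^-)$ the contracting direction) and split $\xi = \xi^+ + \xi^0 + \xi^-$, each of norm $O(\delta)$. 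By the Baker--Campbell--Hausdorff formula, $\exp(\xi) = \exp(\xi^+)\exp(\xi^0+\xi^-)\cdot E$ with $\exp(\xi^+)\in H$ and $\|\log E\|=O(\delta^2)$. Substituting $v' := v\exp(\xi^+)\in H$ changes the integration domain from $B^H_1$ to $B^H_1\exp(\xi^+)$, whose symmetric difference with $B^H_1$ has relative Haar measure $O(\delta)$ by the right-translation analog of the preceding Proposition (valid since $H$ is unimodular), and reduces the task to estimating
\begin{equation*}
\frac{1}{\vol_H(B^H_1)}\int_{v'\in B^H_1}\bigl[f(a_{\log R}v'.x') - f(a_{\log R}v'\exp(\xi^0+\xi^-).x')\bigr]\,dv'.
\end{equation*}

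The residual translation $\exp(\xi^0+\xi^-)$ has norm $O(\delta)$. Its displacement of $a_{\log R}v'.x'$, namely $\exp(\Ad_{a_{\log R}v'}(\xi^0+\xi^-))$, may acquire a fresh $\Lie(H)$-component through the commutators with $v'$ and subsequent amplification by $\Ad_{a_{\log R}}$. I would then iterate the absorb-and-shift procedure: at each round, a $\Lie(H)$-component is removed by a further $O(\delta)$ shift of the integration variable (contributing another $O(\delta)$ F\o{}lner error), and the remaining uncontrolled $\Lie(H)$-content is reduced by a factor of at least $R^{-1}$ through the contraction of $\Ad_{a_{-\log R}}$ on $\Lie(H)$. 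After a number of rounds bounded only in terms of the $A$-weight structure of $\Ad_A|_{\Lie(G)}$, the surviving transverse displacement has size $O(\delta)$ and its contribution is controlled by the Lipschitz norm of $f$, yielding a total error of $O_f(\delta)$.

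The principal technical obstacle is the careful bookkeeping of the iterative absorption: verifying that each round genuinely shrinks the escaped $\Lie(H)$-content by a factor of $R^{-1}$ so that the cumulative error remains $O(\delta)$, and tracking the dependence of the implicit constants on a Sobolev (or Lipschitz) norm and on $\|f\|_\infty$ (the latter absorbing the boundary terms from the repeated F\o{}lner shifts).
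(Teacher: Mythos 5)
Your opening moves coincide with the paper's: rescale to $B^{H}_{1}$ via $u=a_{\log R}va_{-\log R}$, decompose $\mathrm{Lie}(G)=\mathfrak{g}^{-}\oplus\mathfrak{g}^{0}\oplus\mathfrak{g}^{+}$ by $A$-weights, and split the displacement $\xi$ accordingly. You also correctly identify the central difficulty: conjugating the residual $\exp(\xi^{0}+\xi^{-})$ by the integration variable $v'\in H$ regenerates a $\mathfrak{g}^{+}$-component, which $\mathrm{Ad}_{a_{\log R}}$ then amplifies by a power of $R$. Where the proposal breaks down is the iterative ``absorb-and-shift'' scheme you offer to handle this. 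First, the claimed gain of a factor $R^{-1}$ per round has no source: nothing in your iteration is conjugated by $a_{-\log R}$, so the contraction of $\mathrm{Ad}_{a_{-\log R}}$ on $\mathrm{Lie}(H)$ never acts on the escaped content. The only smallness each round produces is the quadratic Baker--Campbell--Hausdorff remainder, i.e.\ a factor of $\delta$, not $R^{-1}$. Second, and fatally, stopping after finitely many rounds leaves a $\mathfrak{g}^{+}$-residue of size roughly $\delta^{k+1}$, which after amplification by $\mathrm{Ad}_{a_{\log R}}$ contributes $O(\delta^{k+1}R^{c})$ for the top weight $c>0$; this is unbounded in $R$ for fixed $\delta$, whereas the lemma must hold uniformly in $R$. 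Already your very first remainder $E$ with $\lVert\log E\rVert=O(\delta^{2})$ has an uncontrolled $\mathfrak{g}^{+}$-part of size $O(\delta^{2})$ suffering the same amplification. The paper avoids the iteration entirely: using the bi-regularity of the product map $\exp(\mathfrak{g}^{-})\exp(\mathfrak{g}^{0})\exp(\mathfrak{g}^{+})\to G$ and the implicit function theorem, it solves \emph{exactly}, for each $\underline{t}\in\log B^{H}_{1}$, for a correction $\underline{s}(\underline{t})\in\mathfrak{g}^{+}$ with $\lVert D_{\underline{t}}\underline{s}\rVert=O(\delta)$ such that the residual lies precisely in $\mathfrak{g}^{-}\oplus\mathfrak{g}^{0}$; since nothing is left in the expanding directions, the Lipschitz bound applies after conjugation by $a_{\log R}$.

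A second, related omission: because the needed correction depends on the integration variable, the substitution in all rounds after the first is not a group translation, so your framing of its cost as a F\o lner/symmetric-difference error of relative measure $O(\delta)$ is not valid there. One must instead treat $\underline{t}\mapsto\underline{r}(\underline{t})=\log(vu\varepsilon^{+})$ as a genuine near-identity diffeomorphism of $\mathfrak{g}^{+}$ and show its Jacobian is $\det(I+q(\underline{t}))=1+O(\delta)$, which is exactly what the paper does and what your write-up never addresses. In short: right setup, right diagnosis of the obstacle, but the proposed mechanism for neutralizing the expanding component neither converges for the stated reason nor terminates with an error uniform in $R$, and the exact $u$-dependent decomposition plus Jacobian computation --- the actual content of the paper's proof --- is missing.
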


\begin{proof}
	By a change of variable and the renormalization properties of the $A$-action with respect the horospherical subgroup we have that -
	\begin{equation}
\label{eq:renormalization}
	\begin{split}
	&\left\lvert\frac{1}{\vol_{H}\left(B^{H}_{R}\right)} \int_{u\in B^{H}_{R}}f(u.x)du-\frac{1}{\vol_{H}\left(B^{H}_{R}\right)} \int_{u \in B^{H}_{R}}f(u.y)du \right\rvert \\
	&= \left\lvert\int_{u \in B^{H}_{1}}f(\left(a_{\log R}ua_{-\log R}\right).x)du-\int_{u \in B^{H}_{1}}f(\left(a_{\log R}ua_{-\log R}\right).y)du\right\rvert \\
	&= \left\lvert\int_{u \in B^{H}_{1}}f(a_{\log R}.u.x')du-\int_{u \in B^{H}_{1}}f(a_{\log R}u.y')du\right\rvert,
	\end{split}
	\end{equation}
	where we write $x'=a_{-\log R}.x,\ y'=a_{-\log R}.y$.
	Writing $y'=\varepsilon.x'$ for some matrix $\varepsilon$ for which $\left\|\varepsilon-Id\right\|<\delta$, we see that
	\begin{equation*}
	u.\varepsilon.x'=(u\varepsilon u^{-1})u.x'.
	\end{equation*}
	We have the following splitting of $Lie(G)$ according to the Adjoint action of $\left<a\right>$ -
	\begin{equation*}
	Lie(G)=\mathfrak{g}^{-}\oplus\mathfrak{g}^{0}\oplus\mathfrak{g}^{+},
	\end{equation*}
	where $\mathfrak{g}^{-},\mathfrak{g}^{0},\mathfrak{g}^{+}$ stands for the sum of the negative eigenspaces, zero eigenspaces and positive eigenspaces accordingly.
On an open and dense subset of $G$ containing the identity, the map $\mathfrak{g}^{-}\oplus \mathfrak{g}^{0} \oplus\mathfrak{g}^{+}\to G$ given by the composition of exponentiation and group multiplication is bi-regular (c.f. \cite[Proposition~$2.7$]{margulisTomanov}).
We may assume that $\varepsilon$ belongs to that dense open set and we denote $\underline{\varepsilon}\in Lie(G)$ to be the element such that $\exp(\underline{\varepsilon})=\varepsilon$.
Furthermore, we may express $\underline{\varepsilon}$ according the the Lie algebra splitting as
$$\underline{\varepsilon} = \underline{v}^{-} + \underline{v}^{0}+\underline{v}^{+}, $$
where $\underline{v}^{-}\in \mathfrak{g}^{-}, \underline{v}^{0}\in \mathfrak{g}^{0}, \underline{v}^{+}\in \mathfrak{g}^{+}$, and $\lVert \underline{v}^{-} \rVert, \lVert \underline{v}^{0} \rVert, \lVert \underline{v}^{-}\rVert \ll~\delta$.
For given $\underline{t}\in B_{1}^{H}$, we define $\underline{s}(\underline{t}) \in \mathfrak{g}^{+}$ by the following relation:
$$ \log\left(\exp(\underline{t})\cdot \exp( \underline{v}^{-} + \underline{v}^{0}) \cdot \exp(-\underline{t})\cdot
\exp(-\underline{s})\right) \in \mathfrak{g}^{-}+\mathfrak{g}^{0}. $$
Clearly we have that $\underline{s}(\underline{0})=\underline{0}$. This relation defines $\underline{s}$ as a ratio of \emph{polynomial} functions in $\underline{t}$ as an application of the implicit function theorem, applied to the system of equations which one gets after applying the inverse of the bi-regular map and demanding that the resulting elements in the Lie algebra will be orthogonal to $\mathfrak{g}^{+}$ (with respect to the Killing form).
Moreover we have that $$\left\lvert \frac{\partial}{\partial \underline{t_{i}}}\underline{s}(\underline{t}) \right\rvert \ll O(\delta)$$ for every $\underline{t_{i}}$ in some fixed basis of $\mathfrak{g}^{+}$.
Therefore $\lVert\underline{t}+\underline{s}(\underline{t}) \rVert = O(1)$ for $\lVert\underline{t}\rVert \leq O(1)$, hence $\lVert \exp(\underline{t})\cdot \exp( \underline{v}^{-} + \underline{v}^{0}) \cdot \exp(-\underline{t})\cdot
\exp(-\underline{s}) \rVert = O(\delta)$.
Denoting $v(u)=\exp(\underline{s}(\underline{t}))$ for $u=\exp(\underline{t})$ we have
\begin{equation*}
\begin{split}
    \int_{u \in B^{H}_{1}}f(a_{\log R}u.y')du &= \int_{u \in B^{H}_{1}}f(a_{\log R}u.\varepsilon x')du \\
    &= \int_{u \in B^{H}_{1}}f(a_{\log R}(u\varepsilon^{-}\varepsilon^{0}u^{-1}v^{-1}).vu\varepsilon^{+} x')du \\
    &= \int_{u \in B^{H}_{1}}f(a_{\log R}vu\varepsilon^{+} x')du+O_{f}(\delta),
\end{split}
\end{equation*}
as the $A$-action is non-expanding along $\mathfrak{g}^{-}\oplus\mathfrak{g}^{0}$.
Writing $\log(vu\varepsilon^{+})\in\mathfrak{g}^{+}$ according to the Baker-Campbell-Hausdorff formula, as the exponential mapping is onto in nilpotent groups, we see that
$$\log(vu\varepsilon^{+}) = \underline{t}+\underline{s}(\underline{t})+\underline{\varepsilon}^{+}+\underline{w}(\underline{t})\in\mathfrak{g}^{+}, $$
where $\underline{w}$ is composed of finitely many commutators involving at-least one of either $\underline{s}$ or $\underline{\varepsilon}^{+}$ and hence $\lVert\underline{w}(\underline{t})\rVert=O(\delta)$.
Set the following change of variables: $$\underline{r}(\underline{t})=\log(vu\varepsilon^{+})\in\mathfrak{g}^{+}.$$ Then the following holds:
\begin{equation*}
    D_{\underline{t}}\underline{r} = I+D_{\underline{t}}\underline{s}+D_{\underline{t}}\underline{w}.
\end{equation*}
We have that
\begin{equation*}
    J(\underline{r}) = \det D_{\underline{t}}(\underline{r})=\det\left(I+q(\underline{t})\right),
\end{equation*}
for some continuous function $q$ which is bounded by $O(\delta)$ for $\lVert t \rVert \leq 1$, as $$\lVert D_{\underline{t}}\underline{s} \rVert, \lVert D_{\underline{t}}\underline{w}\rVert \ll O(\delta).$$
Using Taylor expansion we get 
\begin{equation*}
	\det(I+q(\underline{t}))=1+\tr(q(\underline{t}))+O(\delta^2),
\end{equation*}
where also $\tr(q(\underline{t}))=O(\delta)$ for $\lVert t \rVert \ll O(1)$.
Hence we have that
\begin{equation}
    \int_{u \in B^{H}_{1}}f(a_{\log R}vu\varepsilon^{+} x')du = \int_{u \in B^{H}_{1}}f(a_{\log R}u x')du + O_{f}(\delta),
\end{equation}
this concludes the proof of the lemma.

\end{proof}
We note here that the $f$-dependence in the above theorem (for our choice of the class of functions) is only in $\lVert f \rVert_{\infty}$.

Now we may complete the proof of the equidistribution theorem, for the case where $\Gamma$ is a uniform lattice.

\begin{proof}[Proof of Theorem~\ref{thm:effective-equi} where $\Gamma$ is a uniform lattice]
Let $x\in G/\Gamma$ be a given point in the homogenous space. Define $x'$ to be $x'=a_{-\log{R}}.x$.
Let $\delta \ll InjRad(X)$ be a small parameter to be chosen later, where $InjRad(X)$ stands for the minimal injectivity radius of points in $X$ which is positive by the assumption that $\Gamma$ is uniform lattice.
Look at $B_{\delta}(x') \subset X$. By Lemma~\ref{lem:effective-ergodic} there exists some $y' ~\in B_{\delta}(x')$ for which $\left\lvert\frac{1}{\vol_{H}\left(B^{H}_{R}\right)}\int_{u \in B^{H}_{R}}f(u.y)du \right\rvert \leq \vol_{H}\left(B^{H}_{R}\right)^{-\gamma}$ for some $\gamma~<~s$ and $y~=~a_{\log R}.y'$, as long as $\vol(B_{\delta}(x'))\geq \vol_{H}\left(B^{H}_{R}\right)^{2\gamma-2s}$. As $\vol(B_{\delta}(x'))\approx~\delta^{\dim G}$ we require that
\begin{equation}
\delta \geq \vol_{H}\left(B^{H}_{R}\right)^{\frac{2\gamma-2s}{\dim G}},
\label{eq:delta-condition-acc}
\end{equation}
and in particular it would be sufficient to require
\begin{equation}
\delta \gtrsim R^{\left(2\gamma-2s\right)\frac{d_{H}}{\dim G}}.
\label{eq:delta-condition}
\end{equation}

By Lemma~\ref{lem:key-lemma} we have that
\begin{equation*}
\left\lvert\frac{1}{\vol_{H}\left(B^{H}_{R}\right)}\int_{u\in B^{H}_{R}}f(u.y)-f(u.x)du \right\rvert \leq_{f} \delta.
\end{equation*}
Hence by the triangle inequality we deduce the following bound:
\begin{equation}
\left\lvert\frac{1}{\vol_{H}\left(B^{H}_{R}\right)}\int_{u\in B^{H}_{R}}f(u.x)du \right\rvert \leq_{f} \vol_{H}\left(B^{H}_{R}\right)^{-\gamma}+\delta,
\label{eq:main-estimate}
\end{equation}
where the $f$ dependence is given by the Lipschitz norm of $f$.

Optimizing for $\delta$ under condition~\eqref{eq:delta-condition} we may take $\gamma = \frac{2s}{2+\dim G}$, which yields a bound of $$O_{f}\left(\vol_{H}\left(B^{H}_{R}\right)^{-\frac{2s}{2+\dim G}}\right)=O_{f,H}\left(R^{-\frac{2s\cdot d_{H}}{2+\dim G}}\right)$$ in~\eqref{eq:main-estimate}.
\end{proof}
\begin{rem}
	A slightly more careful optimization procedure would have yield \begin{equation*}
	\gamma=\frac{2s}{\dim G_{a}^{+}+\dim G_{a}^{0} +2},
	\end{equation*}
	where $G_{a}^{+}$ is the stable horospherical group defined by $a$, and $G_{a}^{0}$ is the natural subgroup defined by $\exp\left(\mathfrak{g}_{0}\right)$.
\end{rem}
A key ingredient in the proof of the non-uniform case of Theorem~\ref{thm:effective-equi} in our method is the notion of $\Theta$-diophantine points.
Before we present the details, we show how being $\Theta$-diophantine point (as in Definition~\ref{def:dioph}) allows us to bootstrap quantitative recurrence estimates to control orbit cusp excursions.
Those quantitative estimates were first proven by Kleinbock-Margulis~\cite{KleinbockMargulis}, strengthening an earlier non-divergence theorem of Dani-Margulis. Below we give a brief introduction to the definitions and estimates used in their proof, for a more thorough introduction the reader may consult~\cite{kleinbock-pisa}. We assume throughout the rest of the section that $R\geq 1$.
\begin{defn}\label{def:C-alpha-good}
A function $f:X\to \mathbb{R}$ where $X$ is a locally compact metric space is called \emph{$(C,\alpha)$-good function} with respect to a measure $\mu$ for some $C,\alpha>0$ if it satisfies the following estimate for every open convex subset $B\subset X$:
\begin{equation}\label{eq:c-a-good-def}
\forall \varepsilon>0 \ \ \mu\left(\left\{x\in B \mid \lvert f(x) \rvert <\varepsilon \right\}\right) \leq C\cdot \left(\frac{\varepsilon}{\sup_{x\in B} \lvert f(x)\rvert} \right)^{\alpha} \mu(B).
\end{equation}
\end{defn}
The primary source of examples for $(C,\alpha)$-good functions are polynomial functions (c.f. \cite[Proposition~$3.2$]{KleinbockMargulis}), as we have the following inequality due to Remez (c.f. \cite[Equation~$(14)$]{BrudnyiGanzburg}), for any polynomial mapping $f:\mathbb{R}^{n} \to \mathbb{R}$ of degree at most $d$ defined on a convex subset $B$ of $\mathbb{R}^{n}$:
\begin{equation}\label{eq:remez-ineq}
\left\lvert \left\{t\in B \mid \lvert f(t) \rvert \leq \varepsilon \right\} \right\rvert \leq 4\cdot n \cdot \left(\frac{\varepsilon}{\sup_{x\in B}\lvert f\rvert} \right)^{1/d}\lvert B \rvert.
\end{equation}
As unipotent actions over homogeneous spaces are of polynomial nature, studying trajectories of unipotent flows leads naturally to the class of $(C,\alpha)$-good functions, by identifying the flow on the homogeneous space with the associated flow in the Lie algebra.

We think of our algebraic group as coming with $\mathbb{Q}$-rational structure, namely we may assume that $G$ is embedded in $SL_{k}(\mathbb{R})$ for some $k$, and the homogeneous space $G/\Gamma$ is properly embedded into $SL_{k}(\mathbb{R})/SL_{k}(\mathbb{Z})$.
Given $v\in\mathbb{Z}^{k}\setminus\{0\}$ we define the following functions \\ $\psi_{R,v,x_0}(\underline{h}):~\log B^{H}_{R}\subset~Lie(H)~\to~\mathbb{R}$ by
\begin{equation}
\psi_{R,v,x_0}(\underline{h}) = \left\| \left(\exp(\underline{h}).x_0\right).v \right\|.
\label{eq:norm-of-vec}
\end{equation}
We further define for any primitive subgroup $\Delta \leq \mathbb{Z}^{k}$ the functions $\psi_{R,\Delta,x_0}(\underline{h}):~\log B^{H}_{R}\subset Lie(H) \to \mathbb{R}$ by
\begin{equation*}
\psi_{R,\Delta,x_0}(\underline{h}) = \left\| \left(\exp(\underline{h}).x_0\right).\Delta \right\|.
\label{eq:norm-of-subgrp}
\end{equation*}
\begin{defn}
Let $(X,d)$ be a metric space equipped with a transitive $H$-action. A Borel measure $\mu$ on $X$ is called \emph{uniformly Federer} with respect to a family of subsets $\left\{B_{R}\right\}$ if there exists a constant $C>0$ such that for every $R>0$ and every $h\in H$ the following holds:
\begin{equation}
\frac{\mu(h.B_{3R})}{\mu(h.B_{R})} \leq C.
\label{eq:Federer-def}
\end{equation}
\end{defn}
\begin{lem}
The natural measure $\mu$ on $Lie(H)$ is \emph{uniformly Federer} with respect to the family $\left\{Ad_{a_{\log R}}\log B^{H}_{1} \right\}$. More explicitly:
\begin{equation}
\frac{\mu(Ad_{a_{\log 3R}}\log B^{H}_{1})}{\mu(Ad_{a_{\log R}}\log B^{H}_{1})} \leq C.
\label{eq:Federer}
\end{equation}
\end{lem}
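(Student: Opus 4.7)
The plan is to reduce this to a direct determinant computation on $Lie(H)$, exactly parallel to the volume formula \eqref{eq:ball-vol} that was already used for the balls $B^H_R$ themselves.

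First, I observe that $Ad_{a_{\log R}}$ is a linear automorphism of the vector space $Lie(H)$. Since $Ad$ is multiplicative and $a_t = \exp(t \cdot \log a_1)$, we have
\begin{equation*}
\det\bigl(Ad_{a_{\log R}}\bigr|_{Lie(H)}\bigr) = \exp\bigl(\log R \cdot \tr(Ad_{a_1}|_{Lie(H)})\bigr) = R^{d_H},
\end{equation*}
by the defining relation $\det \exp(X) = \exp(\tr X)$ applied to the endomorphism $\log R \cdot \text{ad}_{\log a_1}$ of $Lie(H)$, together with the definition $d_H = \tr(Ad_{a_1}|_{Lie(H)})$.

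Next, since the natural measure $\mu$ on $Lie(H)$ is the Lebesgue measure associated with the chosen Euclidean structure (coming from the restriction of the Killing form), the change of variables formula for a linear map gives
\begin{equation*}
\mu\bigl(Ad_{a_{\log R}}\log B^H_1\bigr) = \bigl|\det(Ad_{a_{\log R}}|_{Lie(H)})\bigr| \cdot \mu(\log B^H_1) = R^{d_H}\cdot \mu(\log B^H_1).
\end{equation*}
Applying this with $R$ replaced by $3R$ and taking the ratio yields
\begin{equation*}
\frac{\mu\bigl(Ad_{a_{\log 3R}}\log B^H_1\bigr)}{\mu\bigl(Ad_{a_{\log R}}\log B^H_1\bigr)} = \frac{(3R)^{d_H}\mu(\log B^H_1)}{R^{d_H}\mu(\log B^H_1)} = 3^{d_H},
\end{equation*}
which is uniform in $R$. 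Thus one may take $C = 3^{d_H}$ in \eqref{eq:Federer}.

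There is no real obstacle here: the calculation is a one-line consequence of the identity $\det \exp = \exp \tr$ and the linearity of $Ad_{a_{\log R}}$ on $Lie(H)$. The only thing to remark is that, strictly speaking, this is the same computation as for \eqref{eq:ball-vol}, but now performed directly on the Lie algebra side using the Lebesgue measure instead of transporting via $\exp$ to the group side, so no appeal to the modular function of a parabolic is even needed.
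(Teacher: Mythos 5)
Your proof is correct and is essentially the argument the paper intends: the paper dismisses the lemma with the remark that $\mu$ ``satisfies a power law,'' and your determinant computation $\mu(Ad_{a_{\log R}}\log B^{H}_{1}) = R^{d_{H}}\mu(\log B^{H}_{1})$ is exactly the verification of that power law, yielding $C = 3^{d_{H}}$. No gap; you have simply written out the one-line justification the paper leaves implicit.
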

The proof follows at once as $\mu$ satisfies a power law.

\begin{lem}[Nilpotent averages are $(C,\alpha)$-good]\label{lem:nil-c-alpha}
Given $R,\Delta,x_0$, where $\Delta$ is a primitive subgroup $\Delta\leq\mathbb{Z}^{k}$, the function $\psi_{R,\Delta,x_0}$ is a $(C,\alpha)$-good function for some $C,\alpha>0$.
\end{lem}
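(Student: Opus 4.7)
The plan is to realize $\psi_{R,\Delta,x_0}$ as the maximum of absolute values of polynomial functions of uniformly bounded degree on $\Lie(H)$, and then invoke the Remez inequality~\eqref{eq:remez-ineq} coordinate by coordinate. This is essentially the strategy of Kleinbock--Margulis in their non-divergence work, and the nilpotence of $H$ is what makes the polynomial degree bound available.

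First, since $H$ is a connected nilpotent subgroup of $SL_k(\mathbb{R})$, every element $\underline{h}\in\Lie(H)$ is nilpotent as a $k\times k$ matrix, so the series defining $\exp(\underline{h})$ truncates and has matrix entries that are polynomials in $\underline{h}$ of degree at most $k-1$. Fixing a representative $g_0\in SL_k(\mathbb{R})$ of $x_0$, the map $\underline{h}\mapsto \exp(\underline{h})g_0.v$ is therefore a polynomial $\mathbb{R}^k$-valued function of $\underline{h}$ of degree at most $k-1$ for each $v\in\mathbb{R}^k$. Writing $\Delta$ as the $\mathbb{Z}$-span of a basis $v_1,\ldots,v_m$ with $m=\mathrm{rank}(\Delta)\leq k$, expanding
\begin{equation*}
(\exp(\underline{h})g_0.v_1)\wedge\cdots\wedge(\exp(\underline{h})g_0.v_m)
\end{equation*}
in the basis $\{e_{i_1}\wedge\cdots\wedge e_{i_m}\}_{i_1<\cdots<i_m}$ of $\wedge^m\mathbb{R}^k$ yields coordinates $\{p_I(\underline{h})\}$ which are polynomials of degree at most $m(k-1)\leq k(k-1)$. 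Setting $d:=k(k-1)$, this degree bound depends only on the ambient dimension $k$, hence is uniform in $R$, $\Delta$, and $x_0$.

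Now fix any open convex subset $B\subset\Lie(H)$ and let $I_0$ be the index for which $\sup_B|p_I|$ is largest. Since $\psi_{R,\Delta,x_0}(\underline{h})=\max_I|p_I(\underline{h})|$, one has the identity $\sup_B\psi_{R,\Delta,x_0}=\sup_B|p_{I_0}|$ together with the inclusion $\{\underline{h}\in B:\psi_{R,\Delta,x_0}(\underline{h})<\varepsilon\}\subset\{\underline{h}\in B:|p_{I_0}(\underline{h})|<\varepsilon\}$. Applying~\eqref{eq:remez-ineq} to the polynomial $p_{I_0}$ of degree at most $d$ on the convex set $B\subset\Lie(H)\cong\mathbb{R}^{\dim H}$ gives
\begin{equation*}
\bigl|\{\underline{h}\in B:\psi_{R,\Delta,x_0}(\underline{h})<\varepsilon\}\bigr|\leq 4\cdot\dim H\cdot\left(\frac{\varepsilon}{\sup_B\psi_{R,\Delta,x_0}}\right)^{1/d}|B|,
\end{equation*}
which is precisely the $(C,\alpha)$-good property in the sense of~\eqref{eq:c-a-good-def} with the uniform constants $\alpha=1/d$ and $C=4\dim H$.

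The only real point requiring care is the uniformity of the polynomial degree across all primitive subgroups $\Delta$ and all base points $x_0$; once one observes that nilpotence truncates the exponential series and that the wedge rank $m$ is bounded by the ambient dimension $k$, the Remez inequality furnishes the rest of the estimate with no further work. In particular, neither the parameter $R$ nor the embedding depth enters the degree count, so the $(C,\alpha)$-good constants are genuinely uniform, which is what is needed for the Kleinbock--Margulis non-divergence machinery invoked later in the section.
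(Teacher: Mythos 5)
Your argument is correct and follows the same route as the paper's own (terse) proof: express the wedge coordinates of $\exp(\underline{h}).x_0.\Delta$ as polynomials in $\underline{h}$ and invoke the Remez inequality~\eqref{eq:remez-ineq}. You have simply filled in the details the paper leaves implicit, namely the uniform degree bound coming from nilpotence of $\Lie(H)$ and the reduction of the sup-norm (a maximum of $|p_I|$'s) to a single polynomial coordinate, both of which are handled correctly.
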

\begin{proof}
Explicitly writing the function in the exterior product $\largewedge^{k}(\mathbb{R})$, we see this function is polynomial and by applying~\eqref{eq:remez-ineq}, and noticing that $\mu$ is uniformly Federer and absolutely continuous with respect to the Lebesgue measure we deduce the result.
Namely we have the following estimate:
\begin{equation}
\begin{split}
\mu( \left\{\underline{h}\in \log B^{H}_{R} \mid \psi_{R,\Delta,x_0}(\underline{h}) \leq \varepsilon \right\}) &\leq C\cdot \left(\frac{\varepsilon}{\sup_{\underline{h}\in \log B^{H}_{R}}\psi_{R,\Delta,x_0}(\underline{h})} \right)^{\alpha} \mu\left( \log B^{H}_{R} \right) \\
&\leq C\cdot \left(\frac{\varepsilon}{\psi_{R,\Delta,x_0}(\underline{0})} \right)^{\alpha}\mu\left( \log B^{H}_{R} \right),
\end{split}
\label{eq:C-a-good-horo}
\end{equation}
for some explicit $\alpha$.
\end{proof}

Recall that a Besicovitch space is a metric space where the Besicovitch covering theorem (c.f.~\cite[Theorem~$2.7$]{mattila_1995} holds.
The quantitative non-divergence estimate of Kleinbock-Margulis as generalized by Kleinbock-Lindenstrauss-Weiss and Kleinbock takes the following form:
\begin{thm}[\protect{\cite[Theorem~$5.2$]{KleinbockMargulis},\cite[Theorem~$4.3$]{Kleinbocklindenstraussweiss},\cite[Theorem~$2.2$]{Kleinbock}}]\label{thm:KM}
Given an open set $U\subset X$ of some Besicovitch metric space $X$, positive constants $C,D,\alpha$ and a measure $\mu$ which is uniformly-Federer on $U$, there exists $C'>0$ with the following property. Suppose $h:~U~\to~SL_{k}(\mathbb{R})$ is a continuous map, $0<\rho \leq 1$, $z\in U\cap \supp \mu$ and $B=B(z,r)$ is a ball such that $\tilde{B}=B(z,3^{k-1}r)$ is contained in $U$, and for every primitive subgroup $\Delta \leq \mathbb{Z}^{k}$ the following holds, for the set functions $\phi_{\Delta}(x):=\lVert h(x).\Delta \rVert$:
\begin{itemize}
	\item The function $\phi_{\Delta}(x)=\lVert h(x).\Delta \rVert$ is a $(C,\alpha)$-good function on $\tilde{B}$ with respect to $\mu$.
	\item $\sup_{x\in B\cap supp \mu}\phi_{\Delta}(x) \geq \rho^{rank(\Delta)}$.
\end{itemize}
Then for every $0<\varepsilon\leq \rho$,
\begin{equation*}
\mu\left( \left\{x\in B \mid \pi(h(x))\notin X^{1}_{\geq\varepsilon} \right\} \right) \leq C'\cdot \left(\frac{\varepsilon}{\rho}\right)^{\alpha}\cdot \mu(B),
\end{equation*}
where $\pi$ denotes the projection $SL_{k}(\mathbb{R})\to SL_{k}(\mathbb{R})/SL_{k}(\mathbb{Z})$.
\end{thm}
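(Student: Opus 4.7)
The plan is to prove this by a Besicovitch-type covering scheme combined with induction on the rank $k$. First observe that $\pi(h(x)) \notin X^{1}_{\geq \varepsilon}$ is equivalent to the existence of a primitive vector $v\in\mathbb{Z}^{k}\setminus\{0\}$ with $\|h(x).v\| < \varepsilon$, so the bad set $E := \{x \in B : \pi(h(x)) \notin X^{1}_{\geq \varepsilon}\}$ is the union over primitive $v$ of the level sets $E_{v} = \{x \in B : \phi_{\langle v\rangle}(x) < \varepsilon\}$. For a \emph{single} primitive subgroup $\Delta$, the $(C,\alpha)$-good hypothesis together with the sup condition yield at once
\[
\mu\bigl(\{x \in B : \phi_{\Delta}(x) < \varepsilon^{\mathrm{rank}(\Delta)}\}\bigr) \leq C\left(\frac{\varepsilon}{\rho}\right)^{\alpha\cdot \mathrm{rank}(\Delta)} \mu(B),
\]
so the whole difficulty lies in localization: although there are infinitely many primitives in $\mathbb{Z}^{k}$, I must argue that at each $x$ only a controlled family is actually relevant.

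The key combinatorial input I would use is the Minkowski/Mahler observation that at any point $x$, the set of primitive subgroups $\Delta \leq \mathbb{Z}^{k}$ with $\phi_{\Delta}(x) < 1$ is totally ordered by inclusion, i.e.\ forms a flag. This follows because if two incomparable primitive subgroups both had small covolume image under $h(x)$, their sum and intersection (appropriately primitivized) would produce primitive subgroups with strictly smaller $\phi$-values, eventually violating a Minkowski lower bound on the covolume of sublattices of $\mathbb{Z}^{k}$. Consequently, for each $x \in E$ one can define a canonical maximal flag of ``short'' primitive subgroups and partition $E$ according to which such flag appears.

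Next I would run the following expansion scheme. Starting from each $x \in E$, enlarge the ball $B(x,r')$ until the first radius at which some primitive subgroup $\Delta$ attains $\sup_{B(x,r')} \phi_{\Delta} = \rho^{\mathrm{rank}(\Delta)}$. The hypothesis $\tilde B = B(z,3^{k-1}r)\subset U$ guarantees that up to $k-1$ successive enlargements (one per rank stratum) stay inside $U$, which is exactly where the factor $3^{k-1}$ comes from: at each rank one loses a factor of $3$ in the Besicovitch step. On this enlarged ball the $(C,\alpha)$-good bound for $\Delta$ gives $\mu(E_{v}\cap B(x,r')) \leq C(\varepsilon/\rho)^{\alpha} \mu(B(x,r'))$ for the relevant $v$. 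Applying the Besicovitch covering theorem to the cover $\{B(x,r')\}_{x\in E}$ extracts a subcollection of bounded multiplicity, and summing the local bounds, using also the uniform Federer property of $\mu$, delivers the global estimate with constant $C'$ depending on $C,\alpha$, the Federer constant, the Besicovitch multiplicity of $X$, and $k$.

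The main obstacle is the inductive bookkeeping at the moment a primitive $\Delta$ of rank $j \geq 2$ becomes ``short'' on the current ball: one must reduce the remaining bad set to the same theorem applied in a lower rank, namely to the induced map $\bar h(x) : U \to SL_{k-j}(\mathbb{R})$ acting on the quotient $\mathbb{Z}^{k}/\Delta$, while verifying that the $(C,\alpha)$-good and sup hypotheses are inherited by the induced functions $\phi_{\bar\Delta'}$ and while tracking how the constants propagate down the induction. Once this is set up carefully, each level of the induction contributes a factor of $(\varepsilon/\rho)^{\alpha}$ and a bounded multiplicative constant, and the final constant $C'$ emerges as a product over the $k$ rank levels.
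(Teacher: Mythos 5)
First, a point of reference: the paper does not prove Theorem~\ref{thm:KM} at all --- it is imported from Kleinbock--Margulis, as generalized by Kleinbock--Lindenstrauss--Weiss and Kleinbock, with only a remark recording how the hypothesis $\sup\phi_\Delta\geq\rho$ was relaxed to $\sup\phi_{\Delta}\geq\rho^{\mathrm{rank}(\Delta)}$ in the later reference. So your proposal has to be judged against the original sources rather than against anything in this paper. Your outline does capture the genuine architecture of the Kleinbock--Margulis argument: reduction of the bad set to rank-one level sets, the recognition that a naive union bound over infinitely many primitive vectors diverges, a stopping-time expansion of balls until some $\phi_\Delta$ attains $\rho^{\mathrm{rank}(\Delta)}$, a bounded-multiplicity Besicovitch covering, the Federer condition to return from the enlarged balls to $B$, and an induction whose depth accounts for the factor $3^{k-1}$.

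Two things, however, keep this from being a proof. First, your key combinatorial lemma is false as stated: the set of primitive subgroups with $\phi_\Delta(x)<1$ is in general \emph{not} totally ordered by inclusion. For $h(x)=\mathrm{diag}(\delta,\delta,\delta^{-2})$ acting on $\mathbb{Z}^3$, the subgroups $\mathbb{Z}e_1$ and $\mathbb{Z}e_2$ both satisfy $\phi<1$ and are incomparable, and no contradiction with any lower bound arises, since proper sublattices of a unimodular lattice can have arbitrarily small covolume --- only the full lattice is pinned at $\phi_{\mathbb{Z}^k}=1$. The correct input is the submodularity inequality $\phi_{\Gamma_1\cap\Gamma_2}(x)\,\phi_{\Gamma_1+\Gamma_2}(x)\leq\phi_{\Gamma_1}(x)\,\phi_{\Gamma_2}(x)$, which yields a \emph{canonical} filtration (the Harder--Narasimhan-type flag of successive covolume minimizers), not a chain condition on all short subgroups; the marking argument in Kleinbock--Margulis is built on precisely this distinction. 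Second, the entire inductive step --- verifying that the $(C,\alpha)$-good and supremum hypotheses are inherited by the induced functions when a rank-$j$ subgroup becomes short, and reassembling the local estimates on the stopped balls without the constants degenerating --- is the technical heart of the theorem, and you defer it entirely (``once this is set up carefully''). As a roadmap to the literature the proposal is accurate; as a self-contained argument it rests on a false intermediate claim and omits the part that makes the theorem true.
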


We remark here the original formulation of Theorem~\ref{thm:KM} in~\cite{Kleinbocklindenstraussweiss} is slightly weaker, requiring that
$$ \sup_{x\in B\cap \supp\mu}\phi(x) \geq \rho, $$
for all primitive subgroups $\Delta \leq \mathbb{Z}^{k}$.
The generalization we present here appears in a later work by Kleinbock~\cite{Kleinbock}.

We note here that in general, the nilpotent group $H$ is not a Besicovitch space, hence we need to give a variant of the theorem~\cite{Kleinbocklindenstraussweiss} which is applicable to our case.
Moreover, by Lemma~\ref{lem:nil-c-alpha}, the functions $\phi(x)$ are $(C,\alpha)$-good.

\begin{defn}
A \emph{modified ball} of radius $r$ around $z$ in $Lie(H)$ is a subset of the following form
\begin{equation*}
B(z,r)=Ad_{a_{\log r}}\log B^{H}_{1}+z
\end{equation*}
\end{defn}

\begin{lem}[Modified Vitali covering lemma]\label{lem:vitali}
Let $\mathcal{F}$ be a collection of modified balls in $Lie(H)$ with bounded radii. There exists a disjoint subcollection $\mathcal{G}$ of modified balls drawn from $\mathcal{F}$ such that every modified ball $B'\in \mathcal{F}$ intersects some modified ball $B=B(z,r)\in\mathcal{G}$ and $B'\subset~B(z,C_{H}\cdot r)$ for some $C_{H}>0$.
\end{lem}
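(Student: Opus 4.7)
The plan is to adapt the standard greedy/maximal Vitali selection argument to this setting. The one nontrivial geometric input, which takes the place of the usual doubling property for metric balls, is the following containment for modified balls: if $B(z_1,r_1)\cap B(z_2,r_2)\neq\emptyset$ with $r_1\geq r_2$, then $B(z_2,r_2)\subset B(z_1,C_H r_1)$ for some constant $C_H$ depending only on $H$ (and the base neighborhood $B^H_1$). Once this is in hand the conclusion of the lemma will follow by a routine exhaustion.

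To verify the containment, I would decompose $\mathfrak{h}=\Lie(H)=\bigoplus_\lambda \mathfrak{h}_\lambda$ into (generalized) weight spaces under the adjoint action of $a_t$. Since $H$ is horospherical with respect to $A=\left\langle a_t\right\rangle$, the weights $\lambda$ form a finite set of strictly positive real numbers, bounded below by some $\lambda_{\min}>0$. Writing $X=\sum_\lambda X_\lambda$, one has $\Ad_{a_{\log r}}X_\lambda=r^\lambda X_\lambda$ (up to a benign polynomial correction from nilpotent Jordan blocks, absorbed into constants), so the shape $\Ad_{a_{\log r}}\log B^H_1+z$ is comparable, in each weight component, to a box of half-width $O(r^\lambda)$ around the weight-component of $z$. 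The intersection hypothesis with $r_1\geq r_2$ gives $\|z_1-z_2\|_\lambda\ll r_1^\lambda$ for every $\lambda$, and then for any $x\in B(z_2,r_2)$ the triangle inequality yields $\|x-z_1\|_\lambda\ll r_1^\lambda$. Since only finitely many weights appear and all satisfy $\lambda\geq\lambda_{\min}>0$, a uniform constant $C_H$ can be chosen so that a dilation of the radius to $C_H r_1$ absorbs every constant into the shape $\Ad_{a_{\log C_H r_1}}\log B^H_1$.

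With the containment established, the greedy selection runs as follows. Let $R_0=\sup_{B\in\mathcal{F}}r(B)<\infty$ and partition $\mathcal{F}$ into the dyadic layers $\mathcal{F}_k=\{B\in\mathcal{F}:R_0 2^{-k-1}<r(B)\leq R_0 2^{-k}\}$. Inductively on $k$, use Zorn's lemma to pick a maximal disjoint subcollection $\mathcal{G}_k$ of those balls in $\mathcal{F}_k$ that do not intersect any element of $\bigcup_{j<k}\mathcal{G}_j$, and set $\mathcal{G}=\bigcup_k\mathcal{G}_k$. Disjointness is built in. For any $B'\in\mathcal{F}_k$ not itself in $\mathcal{G}_k$, maximality forces $B'$ to meet some $B=B(z,r)\in\bigcup_{j\leq k}\mathcal{G}_j$ with $r\geq R_0 2^{-k-1}\geq r(B')/2$; the containment above, applied after an extra factor of $2$ absorbed into $C_H$, gives $B'\subset B(z,C_H r)$.

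The main obstacle here is genuinely just the geometric containment, since modified balls are anisotropic "boxes" whose side lengths grow at different polynomial rates in $r$ along different weight directions, so one must check that no single weight produces an unbounded distortion as the radius scale changes. Once the uniformity in $\lambda$ is secured via the finiteness and positivity of the weight set, the rest of the argument is essentially a textbook Vitali exhaustion and requires nothing beyond bounded radii and Zorn's lemma.
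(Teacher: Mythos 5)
Your argument is correct and is essentially the proof the paper has in mind: the paper simply asserts that the lemma ``follows along the lines of the usual proof of the Vitali covering lemma,'' and your greedy dyadic selection with Zorn's lemma is exactly that standard argument. The one substantive detail you add --- the engulfing property $B(z_2,r_2)\subset B(z_1,C_H\cdot r_1)$ for intersecting modified balls, verified through the weight-space decomposition of $\mathrm{Lie}(H)$ under $\mathrm{Ad}_{a_t}$ and the uniform lower bound $\lambda\geq\lambda_{\min}>0$ on the (strictly positive, since $H$ is expanding horospherical) weights --- is precisely what the paper leaves implicit, and you carry it out correctly; the Jordan-block caveat is in fact vacuous here because the $a_t$ are assumed semisimple.
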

The proof follows along the lines of the usual proof of the Vitali covering lemma~\cite[Theorem~$2.8$]{mattila_1995}.

\begin{thm}[Modified non-divergence]\label{thm:mod-non-divergence}
Given an open set $U\subset~Lie(H)$, positive constants $C,D,\alpha$ and a measure $\mu$ which is uniformly Federer on $Lie(H)$ as above, then there exists $C'>0$ with the following property. Suppose $\exp:Lie(H)\to SL_{k}(\mathbb{R})$ is the exponential map, $0<~\rho~\leq~1$, $z\in Lie(H)\cap \supp \mu$ and $B=B(z,r)$ is the following subset -  $B(z,r)~=~Ad_{a_{\log r}}\log B^{H}_{1}~+~z$ such that $\tilde{B}=B(z,3\cdot C_{H}^{\dim H}\cdot r)$ is contained in $U$, and for every primitive subgroup $\Delta \leq \mathbb{Z}^{k}$ and its associated function $\phi_{\Delta}(\underline{h}):=\lVert \exp(\underline{h}).\Delta \rVert$ the following holds:
\begin{itemize}
	\item $\sup_{\underline{h}\in B(z,r)\cap supp \mu}\phi_{\Delta}(\underline{h}) \geq \rho^{rank(\Delta)}$.
\end{itemize}
Then for every $0<\varepsilon\leq \rho$,
\begin{equation*}
\mu\left( \left\{\underline{h}\in B \mid \pi(\exp(\underline{h}))\notin X^{1}_{\geq\varepsilon} \right\} \right) \leq C'\cdot \left(\frac{\varepsilon}{\rho}\right)^{\alpha}\cdot \mu(B),
\end{equation*}
where $\pi$ denotes the natural projection $SL_{k}(\mathbb{R})\to SL_{k}(\mathbb{R})/SL_{k}(\mathbb{Z})$.
\end{thm}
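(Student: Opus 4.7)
The plan is to mimic the proof of Theorem~\ref{thm:KM} from~\cite{Kleinbocklindenstraussweiss,Kleinbock}, replacing the Besicovitch covering argument (which is unavailable on $Lie(H)$ when $H$ is a non-abelian nilpotent Lie group, since the modified balls $Ad_{a_{\log r}}\log B^{H}_{1}+z$ are far from being Euclidean balls) by the modified Vitali covering Lemma~\ref{lem:vitali}. All other ingredients already carry over: the measure $\mu$ is uniformly Federer with respect to the modified balls by~\eqref{eq:Federer}, and the relevant coordinate-type functions $\phi_{\Delta}(\underline{h})=\lVert\exp(\underline{h}).\Delta\rVert$ are $(C,\alpha)$-good on modified balls by Lemma~\ref{lem:nil-c-alpha}. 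The cushion $3\cdot C_{H}^{\dim H}$ in the hypothesis on $\tilde{B}$ is exactly what is needed to accommodate the enlargement constant $C_H$ of Lemma~\ref{lem:vitali} after iterating the argument across the rank hierarchy up to rank $\dim H$.

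First, I would let $B_{\varepsilon}=\{\underline{h}\in B : \pi(\exp(\underline{h}))\notin X^{1}_{\geq\varepsilon}\}$. By Mahler's criterion, each $\underline{h}\in B_{\varepsilon}$ satisfies $\phi_{\Delta}(\underline{h})<\varepsilon^{rank(\Delta)}$ for at least one primitive subgroup $\Delta\leq\mathbb{Z}^{k}$. Following Kleinbock's hierarchical scheme, I attach to each such $\underline{h}$ a \emph{critical} primitive subgroup $\Delta(\underline{h})$ and a \emph{critical scale} $r(\underline{h})\leq r$, taken as the smallest radius for which
\begin{equation*}
\sup_{\underline{h}'\in B(\underline{h}, r(\underline{h}))}\phi_{\Delta(\underline{h})}(\underline{h}')\geq \rho^{rank(\Delta(\underline{h}))};
\end{equation*}
existence of such a scale within $[0,r]$ is guaranteed by the sup-hypothesis at scale $r$, and $C_{H}r(\underline{h})$ still lies within $\tilde{B}\subset U$ thanks to the cushion.

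Next, I apply Lemma~\ref{lem:vitali} to the family $\{B(\underline{h}, r(\underline{h}))\}_{\underline{h}\in B_{\varepsilon}}$ to extract a pairwise-disjoint subcollection $\{B_{i}=B(\underline{h}_{i}, r_{i})\}$ whose $C_{H}$-dilates $\{B(\underline{h}_{i}, C_{H}r_{i})\}$ still cover $B_{\varepsilon}$. On each enlarged ball, Lemma~\ref{lem:nil-c-alpha} together with the maximality defining $r_{i}$ gives
\begin{equation*}
\mu\bigl(\{\underline{h}\in B(\underline{h}_{i}, C_{H}r_{i}):\phi_{\Delta(\underline{h}_{i})}(\underline{h})<\varepsilon^{rank(\Delta(\underline{h}_{i}))}\}\bigr)\leq C\bigl(\varepsilon/\rho\bigr)^{\alpha}\mu\bigl(B(\underline{h}_{i}, C_{H}r_{i})\bigr).
\end{equation*}
The uniform Federer property~\eqref{eq:Federer} lets me replace $\mu(B(\underline{h}_{i}, C_{H}r_{i}))$ by $\mu(B_{i})$ up to a fixed multiplicative loss; summing over the disjoint family and exploiting the fact that the $B_{i}$ sit inside a bounded enlargement of $B$ (again by the cushion and Federer) yields $\mu(B_{\varepsilon})\leq C'(\varepsilon/\rho)^{\alpha}\mu(B)$.

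The main obstacle I anticipate is the hierarchical bookkeeping of Kleinbock~\cite{Kleinbock}: at a given bad point $\underline{h}$ several primitive subgroups of different ranks may simultaneously violate $\phi_{\Delta}<\varepsilon^{rank(\Delta)}$, and the joint selection of $\Delta(\underline{h})$ and $r(\underline{h})$ must be made so that the sup-estimate at the critical scale is indeed saturated by the critical subgroup and not by some other competitor. Once this selection is performed in the same way as in~\cite[Thm.~2.2]{Kleinbock}, only two adaptations to our non-Besicovitch setting remain, both essentially cosmetic: pushing the Vitali enlargement $C_{H}$ through each level of the rank hierarchy, which accounts for the exponent $\dim H$ in the cushion $3\cdot C_{H}^{\dim H}$, and absorbing the Federer constants from~\eqref{eq:Federer} into the final $C'$. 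The $(C,\alpha)$-good constants from~\eqref{eq:C-a-good-horo} are unaffected by the switch from Euclidean to modified balls because~\eqref{eq:remez-ineq} already applies on any convex set and the power-law behavior of $\mu$ is built into the definition of modified balls.
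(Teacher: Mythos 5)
Your proposal is correct and takes essentially the same route as the paper: the paper's own proof consists precisely of importing the argument of Kleinbock \cite[Theorems~$2.1$,$2.2$]{Kleinbock}, replacing the Besicovitch covering step by the modified Vitali covering of Lemma~\ref{lem:vitali}, and absorbing the resulting enlargement $C_{H}$ and the uniform Federer constants into the cushion $3\cdot C_{H}^{\dim H}$ and the final constant $C'$. Your sketch is in fact more explicit than the paper's about the hierarchical selection of critical subgroups and scales, but it is the same argument.
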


The proof follows at once from the proof given in~\cite[Theorems~$2.2$,$2.1$]{Kleinbock}.
The only modification needed in the proof is to replace the Besicovitch covering theorem used in equation~$(2.2)$ of~\cite{Kleinbock} by the Vitali covering theorem in the version stated in Lemma~\ref{lem:vitali}.
As a result, the union of modified balls of radius $C_{H}\cdot r_{y}$ would need to be taken, which is contained inside the modified ball $B(x,3\cdot C_{H}\cdot r)$, and we may estimate the measure of $B(x,3\cdot C_{H}^{\dim H} \cdot r)$ using the fact that $\mu$ is uniformly Federer.
The following corollary allows us to relate the diophantine type of the origin point $x$ with the excursion of other points in the piece of orbit $B^{H}_{R}.x$ in the following quantitative manner:

\begin{cor}\label{cor:inheritaed-diophantine-cond1}
Given some one-parameter digonalizable subgroup $A\leq~G$, assume that $x$ is a diophantine point of type $\Theta$ for $D=D(\Gamma)$ and $H\leq G$ which is horopsherical with respect to $A$, and let $B^{H}_{R}$ defined as before, then for any $R\geq \Theta(r):$
\begin{equation}\label{eq:bound-dioph}
\frac{\vol_{H}\left\{h \in B^{H}_{R} \mid ha_{-\log r}.x \notin X^{1}_{\geq r^{-D-\epsilon}}\right\}}{\vol_{H}\left(B^{H}_{R}\right)} \ll r^{-\alpha\cdot\epsilon},
\end{equation}
where $\alpha>0$ is the same constant as in Theorem~\ref{thm:mod-non-divergence}.
\end{cor}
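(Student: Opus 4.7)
The plan is to derive the corollary as a direct application of the modified non-divergence Theorem~\ref{thm:mod-non-divergence}, using the $\Theta$-diophantine hypothesis to supply the lower bound required on all primitive subgroups simultaneously. First, by Definition~\ref{def:dioph} applied at scale $r>1$, there exists some $h_{0}\in B^{H}_{\Theta(r)}$ with $h_{0}.a_{-\log r}.x\in X_{\geq r^{-D}}$. The hypothesis $R\geq \Theta(r)$ guarantees $h_{0}\in B^{H}_{R}$, so the good point lies inside the piece of orbit over which we are averaging.

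Next, I verify the two hypotheses of Theorem~\ref{thm:mod-non-divergence} on the modified ball $B=\log B^{H}_{R}=Ad_{a_{\log R}}\log B^{H}_{1}$, applied to the map $\underline{h}\mapsto \exp(\underline{h}).a_{-\log r}.x$. Lemma~\ref{lem:nil-c-alpha} shows that for each primitive $\Delta\leq \mathbb{Z}^{k}$ the function $\phi_{\Delta}(\underline{h}):=\lVert \exp(\underline{h}).(a_{-\log r}.x).\Delta \rVert$ is $(C,\alpha)$-good on any modified ball, including the enlargement of $B$ by the factor $3\cdot C_{H}^{\dim H}$ which the proof of Theorem~\ref{thm:mod-non-divergence} requires, and the Federer lemma ensures that the natural measure $\mu$ on $Lie(H)$ is uniformly Federer on this family. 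For the second hypothesis, Definition~\ref{def:alpha-func} tells us that $h_{0}.a_{-\log r}.x\in X_{\geq r^{-D}}$ implies
$$\lVert (h_{0}.a_{-\log r}.x).V\rVert_{\infty}\geq r^{-D}\qquad \text{for every nonzero } V\in \largewedge^{m}(\mathbb{Z})\setminus\{0\},\ 1\leq m\leq k.$$
Taking $V$ to be a primitive vector representing $\Delta$, and using $r\geq 1$ to get $r^{-D}\leq 1$, we obtain, for every primitive $\Delta$,
$$\sup_{\underline{h}\in \log B^{H}_{R}}\phi_{\Delta}(\underline{h})\;\geq\; \phi_{\Delta}(\log h_{0})\;\geq\; r^{-D}\;\geq\; \bigl(r^{-D}\bigr)^{\mathrm{rank}(\Delta)}.$$
Thus the second hypothesis of Theorem~\ref{thm:mod-non-divergence} holds with $\rho=r^{-D}$.

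Finally, I apply Theorem~\ref{thm:mod-non-divergence} with $\rho=r^{-D}$ and $\varepsilon=r^{-D-\epsilon}$ (note $\varepsilon\leq \rho$), which yields
$$\mu\!\left(\left\{\underline{h}\in \log B^{H}_{R}\;:\;\pi(\exp(\underline{h}).a_{-\log r}.x)\notin X^{1}_{\geq r^{-D-\epsilon}}\right\}\right)\;\leq\; C'\!\left(\frac{r^{-D-\epsilon}}{r^{-D}}\right)^{\alpha}\!\mu(\log B^{H}_{R})\;=\;C'\, r^{-\alpha\epsilon}\,\mu(\log B^{H}_{R}).$$
Pushing this estimate forward through the exponential map and dividing by $\vol_{H}(B^{H}_{R})$ gives exactly the bound~\eqref{eq:bound-dioph}.

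The entire conceptual work of the proof sits in the single existential input obtained from the $\Theta$-diophantine assumption; turning this into a uniform lower bound on $\phi_{\Delta}$ for every primitive $\Delta$ simultaneously is essentially automatic from the $X_{\geq \varepsilon}$ definition, which already bundles all ranks together. The only care needed is to consistently work with modified balls (rather than metric balls) throughout, so that Lemma~\ref{lem:vitali} supplies the covering argument inside Theorem~\ref{thm:mod-non-divergence}; this is what I would expect to be the main bookkeeping obstacle, but it is not a genuine mathematical difficulty since the proof of the modified non-divergence theorem was already set up precisely for this situation.
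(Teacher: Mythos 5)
Your proof is correct and follows exactly the paper's route: the paper's own proof is the one-line observation that Theorem~\ref{thm:mod-non-divergence} applies with $\rho$ of order $r^{-D}$ supplied by the diophantine condition, and your write-up simply fills in the verification of the hypotheses (the good point $h_{0}\in B^{H}_{\Theta(r)}\subseteq B^{H}_{R}$, the $(C,\alpha)$-goodness from Lemma~\ref{lem:nil-c-alpha}, the uniform lower bound $\sup\phi_{\Delta}\geq r^{-D}\geq\rho^{\mathrm{rank}(\Delta)}$, and the choice $\varepsilon=r^{-D-\epsilon}$). No discrepancies.
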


The proof follows at-once from Theorem~\ref{thm:mod-non-divergence} as we may take $$\rho = C\cdot r^{-D}$$ by the diophantine condition imposed on the point $x$.

As was noted in the begining of the paper, whenever a point $p$ belongs to the set $X^{1}_{\geq r^{-D}}$, then we have a bound over the injectivity radius at $p$, $InjRad(p) \geq r^{-D\cdot k}$.
As a result, we may state the previous inequality as
\begin{equation*}
\frac{\vol_{H}\left\{h \in B^{H}_{R} \mid InjRad\left(a_{-\log r}h.x\right) \leq r^{-(D+\epsilon)\cdot k}\right\}}{\vol_{H}\left(B^{H}_{R}\right)} \ll C\cdot r^{-\alpha\cdot\epsilon},
\end{equation*}
for every $R>\Theta(r)$, whenever $x$ is diophantine point of type $\Theta$.

We note also that in the case where $x$ belongs to some absolute compact set, a direct corollary of Theorem~\ref{thm:mod-non-divergence} would be that most of the orbit piece $B^{H}_{R}.x$ belongs to the set $X^{1}_{\geq R^{-\epsilon}}$, by setting up $\rho = O_{x}(1)$.

We end the discussion of diophantine conditions by explicitly providing examples of such lattices, and in particular relating those conditions to algebraic properties of the lattice $x_0.\mathbb{Z}^{k}$ by means of Schmidt's subspace theorem using results of Skriganov~\cite{Skriganov}.

The examples we provide are based on slow divergence of the base point, namely $a_{-\log R}.x_0$ diverges slowly (with respect to $R^{-D}$), and hence $\Theta(R)=1$ for those examples.

We restrict ourselves to the case of $G=SL_{k}(\mathbb{R})$, $\Gamma=SL_{k}(\mathbb{Z})$.
By \cite[Lemma~$3.2$, Equation~$3.21$]{Skriganov} we have that for (Haar) almost-every lattice $x\in G/\Gamma$ and arbitrarily small $\epsilon>0$
\begin{equation*}
\min_{v\in a_{-r}.x\setminus\{0\}}\lVert v \rVert >C(\epsilon,x)\cdot r^{-1+\frac{1}{k}-\epsilon},
\end{equation*}
which in particular shows that our diophantine condition holds \emph{generically} with any $D>0$, as the definition of the semi-norm $\alpha$ involves computing such a minima over the countably-many primitive subgroups of $\mathbb{Z}^{k}$ and countable intersection of sets of full measure is of full measure.

For every $y\in \mathbb{R}^{k}$ we define the quantity $Nm(y) = \prod_{i=1}^{k}\lvert y_{i} \rvert$.
It is evident that $\lVert y \rVert_{\infty}^{k} \geq Nm(y)$.
Moreover, if $a\in SL_{n}(\mathbb{R})$ is a diagonal matrix  we have that $Nm(a.y)=Nm(y)$, and $\lVert a.y \rVert_{\infty} \geq a_{\min}\cdot \lVert y \rVert_{\infty}$, where $a_{\min}$ is the smallest entry (in absolute value) in the diagonal of the matrix $a$.

Now assume that $g\in SL_{k}\left(\overline{\mathbb{Q}}\right)\cap SL_{k}\left(\mathbb{R}\right)$, where $\overline{\mathbb{Q}}$ stands for the algebraic closure of $\mathbb{Q}$.
For every $v\in g\cdot \mathbb{Z}^{k}$, we have that 
\begin{equation*}
    Nm(v) = \prod_{i=1}^{k} \left\lvert \sum_{j=1}^{k}g_{i,j}z_{j} \right\rvert,
\end{equation*}
for some fixed vector $z\in \mathbb{Z}^{k}$.

\begin{defn}
Let
\begin{equation*}
\mathcal{K}: K_{0}\leq K_{1} \leq \cdots \leq K_{k}
\end{equation*}
be a tower of real number fields such that
\begin{equation*}
\left[K_{j+1}:K_{j}\right]\geq k+1
\end{equation*}
for $j=0,1,\ldots,k-1$.
Denote by $Cl(\mathcal{K})$ the class of matrices in $GL_{k}(\mathbb{R})$ where the elements of the $j$'th column $\xi_{1,j},\ldots,\xi_{k,j}$ belong to the field $K_{j}$ and the algebraic numbers $1,\xi_{1,j},\ldots,\xi_{k,j}$ are linearly independent over $K_{j-1}$ for $j=1,\ldots,k$.
We denote by $SCl(\mathcal{K})$ the natural projection of $Cl(\mathcal{K})$ from $GL_{k}(\mathbb{R})$ to $SL_{k}(\mathbb{R})$.
\end{defn}
\begin{ex}
Assume that $x_0$ is a non-$H$-periodic point in $G/\Gamma$ which has the following realization $x_0=g.SL_{k}(\mathbb{Z})$ for some $g\in SCl(\mathcal{K})$, for some tower of number fields $\mathcal{K}$.
Let $\Nm(y)$ be the following function $\Nm(y)=\prod_{i=1}^{k}\lvert y_{i}\rvert$ defined for any $y\in \mathbb{R}^{k}$.
For a lattice $L\leq \mathbb{R}^k$, define $v(L,\rho)$ to be
\begin{equation*}
v(L,\rho)=\min \left\{ \lvert \Nm(y) \rvert \mid y\in L, 0<\lVert y\rVert<\rho \right\}.
\end{equation*}
By~\cite[Lemma~$5.4$]{Skriganov}, we have for every $\epsilon>0$ and diagonal matrix $a$ the following estimate
\begin{equation*}
v(a. x_{0},\rho) \geq c_{\epsilon}\cdot \left( \rho\cdot a^{-1}_{min}\right)^{-\epsilon},
\end{equation*}
where $a_{min}=\min_{i=1,\ldots,k}\left\{\lvert\left(a\right)_{i,i}\rvert\right\}$ and $c_{\epsilon}$ is an absolute constant.
Choosing $\rho=2^{k}$, and $a=a_{-\log R}$ gives the following estimate for every $\epsilon>0$:
\begin{equation*}
v\left(x_{0},R^{C}\right) \geq c'_{\epsilon}\left(R^{-\epsilon\cdot C\cdot\sigma}\right),
\end{equation*}
where $C$ is some explicit constant, and $R^{-\sigma}$ equals to the smallest diagonal entry appearing in $a_{1}$.
Notice that we have
\begin{equation*}
\lvert \Nm(y) \rvert \leq \lVert y \rVert^{k}_{\infty},
\end{equation*}
and therefore the above estimate controls the length of the shortest vector.
Notice that the estimate given in the proof of~\cite[Lemma~$5.4$]{Skriganov}, uses the subspace theorem by induction argument over the number of linear forms, hence the same estimate (with possibly different constants), holds for any primitive subgroup $\Delta \leq \mathbb{Z}^{k}$ (as $x_0.\Delta$ is a lattice in the corresponding vector space, which also satisfies the assumptions of~\cite[Lemma~$5.4$]{Skriganov}).
Therefore the diophantine condition holds in this algebraic case as well.
\end{ex}

\begin{proof}[Proof of Theorem~\ref{thm:effective-equi} where $\Gamma$ is a non-uniform lattice]
For this proof, we assume that $G$ is a real algebraic group defined over $\mathbb{Q}$ which is realized as a subgroup of $SL_{k}(\mathbb{R})$ by some explicit embedding $i~:~G\to~SL_{k}(\mathbb{R})$.
The only difficulty of applying the proof given above for non-compact homogeneous spaces is hidden in Lemma~\ref{lem:key-lemma}. During the course of the proof of this Lemma, we have assumed that $x',y'$ are $\delta$-close and in particular $\delta$ is much smaller than the injectivity radius at $x'\in G/\Gamma$. In the non-compact setting, the infimum of the  injectivity radius over the whole space shrinks to $0$ as this radius shrinks along the cusps, hence we need to carefully control the injectivity radius of $x'=a_{-\log R}.x$ for various times $R$ in order to satisfy condition~\eqref{eq:delta-condition}.
This is done via the assumption about the diophantine nature of the point $x\in G/\Gamma$.
Let $D$ be any positive number such that 
\begin{equation*}
    D< 2s\cdot\frac{d_{H}}{k\cdot\dim G}.
\end{equation*}
With this choice of $D$, we have that if $y\in X^{1}_{\geq R^{-D}}$, injectivity radius requirement in~\eqref{eq:delta-condition} is satisfied for $y$.

Given $R$, we may write
\begin{equation*}
\begin{split}
B^{H}_{\Theta(R)\cdot R}.x_0 &= a_{\log(\Theta(R)\cdot R)}.B^{H}_{1}.a_{-\log(\Theta(R)\cdot R)}.x_0 \\
&= a_{\log(R)}.B^{H}_{\Theta(R)}a_{-\log(R)}.x_0.
\end{split}
\end{equation*}
By the assumption that the point $x$ satisifies the diophantine condition,
\begin{equation*}
    B^{H}_{\Theta(R)}a_{-\log(R)}.x_0 \cap X^{1}_{\geq R^{-D}} \neq \emptyset.
\end{equation*}
Therefore, by Corollary~\ref{cor:inheritaed-diophantine-cond1}, we have that most of the points in the set  $B^{H}_{\Theta(R)}a_{-\log(R)}.x_0$ are contained in $X^{1}_{\geq R^{-D-\epsilon}}$.
At this point we may and will assume that $\Theta(R)\geq R$.
If so, one may easily verify that
\begin{equation*}
\begin{split}
    \frac{1}{\vol_{H}(B^{H}_{R})}\int_{v\in B^{H}_{R}}\frac{1}{\vol_{H}(B^{H}_{R\cdot \Theta(R)})}\int_{u\in B^{H}_{R\cdot \Theta(R)}}f(v.u.x)dudv \\
    = \frac{1}{\vol_{H}(B^{H}_{R\cdot \Theta(R)})}\int_{u\in B^{H}_{R\cdot \Theta(R)}}f(u.x)du + O_{f}\left(\frac{1}{\Theta(R)}\right),
\end{split}
\end{equation*}
by a Lipschitz estimate.

We may estimate the left hand-side of the equation as follows
\begin{equation}
\begin{split}
   &\frac{1}{\vol_{H}(B^{H}_{R})}\int_{v\in B^{H}_{R}}\frac{1}{\vol_{H}(B^{H}_{R\cdot \Theta(R)})}\int_{u\in B^{H}_{R\cdot \Theta(R)}}f(v.u.x)dudv \\
   &= \frac{1}{\vol_{H}(B^{H}_{R})}\int_{v\in B^{H}_{R}}\frac{1}{\vol_{H}(B^{H}_{\Theta(R)})}\int_{u\in B^{H}_{\Theta(R)}}f(v.a_{\log R}.u.a_{-\log R}.x)dudv \\
   &= \frac{1}{\vol_{H}(B^{H}_{1})}\int_{v\in B^{H}_{1}}\frac{1}{\vol_{H}(B^{H}_{\Theta(R)})}\int_{u\in B^{H}_{\Theta(R)}}f(a_{\log R}.v.u.a_{-\log R}.x)dudv.
\end{split}
\end{equation}
We split the integration region $\left\{u\in B^{H}_{\Theta(R)}\right\}\subset H$ as follows:
$$\left\{u\in B^{H}_{\Theta(R)}\right\} = \mathcal{A}\cup \mathcal{B},$$
where $$\mathcal{A}=\left\{u\in B^{H}_{\Theta(R)} : u.a_{-\log R}.x \in X^{1}_{\geq R^{-D-\epsilon}}\right\},$$ and $$\mathcal{B}=\left\{u\in B^{H}_{\Theta(R)} : u.a_{-\log R}.x \notin X^{1}_{\geq R^{-D-\epsilon}}\right\}.$$
By Corollary~\ref{cor:inheritaed-diophantine-cond1}, we have that
\begin{equation*}
    \frac{\vol_{H}(\mathcal{B})}{\vol_{H}(B^{H}_{\Theta(R)})} = O\left(\Theta(R)^{-\alpha\cdot\epsilon}\right).
\end{equation*}
Therefore we have the estimate
\begin{equation*}
\begin{split}
    &\frac{1}{\vol_{H}(B^{H}_{1})}\int_{v\in B^{H}_{1}}\frac{1}{\vol_{H}(B^{H}_{\Theta(R)})}\int_{u\in B^{H}_{\Theta(R)}}f(a_{\log R}.v.u.a_{-\log R}.x)dudv \\
    &= \frac{1}{\vol_{H}(B^{H}_{1})}\int_{v\in B^{H}_{1}}\frac{1}{\vol_{H}(B^{H}_{\Theta(R)})}\int_{u\in\mathcal{A}}f(a_{\log R}.v.u.a_{-\log R}.x)dudv \\
    &\ + \frac{1}{\vol_{H}(B^{H}_{1})}\int_{v\in B^{H}_{1}}\frac{1}{\vol_{H}(B^{H}_{\Theta(R)})}\int_{u\in\mathcal{B}}f(a_{\log R}.v.u.a_{-\log R}.x)dudv \\
    &= \frac{1}{\vol_{H}(B^{H}_{1})}\int_{v\in B^{H}_{1}}\frac{1}{\vol_{H}(B^{H}_{\Theta(R)})}\int_{u\in\mathcal{A}}f(a_{\log R}.v.u.a_{-\log R}.x)dudv + O_{f}\left(\Theta(R)^{-\alpha\cdot\epsilon}\right).
\end{split}
\end{equation*}
As for estimating the first summand, exchanging the integrals we have
\begin{equation*}
\begin{split}
&\frac{1}{\vol_{H}(B^{H}_{1})}\int_{v\in B^{H}_{1}}\frac{1}{\vol_{H}(B^{H}_{\Theta(R)})}\int_{u\in\mathcal{A}}f(a_{\log R}.v.u.a_{-\log R}.x)dudv \\
&= \frac{1}{\vol_{H}(B^{H}_{\Theta(R)})}\int_{u\in \mathcal{A}}\frac{1}{\vol_{H}(B^{H}_{1})}\int_{v\in B^{H}_{1}}f(a_{\log R}.v.u.a_{-\log R}.x)dvdu.
\end{split}
\end{equation*}
Writing $y=y(u)$ by $y=a_{\log R}.u.a_{-\log R}.x\in B^{H}_{R}.x$ we see that the inner integral gives - 
\begin{equation*}
\frac{1}{\vol_{H}(B^{H}_{1})}\int_{v\in B^{H}_{1}}f(a_{\log R}.v.u.a_{-\log R}.x)dv=\frac{1}{\vol_{H}\left(B^{H}_{R}\right)}\int_{v\in B^{H}_{R}}f(v.y)dv.
\end{equation*}
As $D$ was chosen specifically so that the volume condition in~\eqref{eq:delta-condition} will hold, with respect to the effective mean ergodic theorem for average over $B^{H}_{R}$ achieved in Lemma~\ref{lem:effective-ergodic}, we may estimate $\frac{1}{\vol_{H}\left(B^{H}_{R}\right)}\int_{v\in B^{H}_{R}}f(v.y)dv$ by $C\cdot R^{-s'}\cdot Sob_{K}(f)$ for the same $s'$ which was calculated during the proof of the compact case of the theorem.
Combining all the above averages to one yields:
\begin{equation}
    \frac{1}{\vol_{H}(B^{H}_{R\cdot\Theta(R)})}\int_{u\in B^{H}_{\Theta(R)\cdot R}}f(u.x)du \ll R^{-s'}\cdot Sob_{K}(f)+\Theta(R)^{-1}\cdot\lVert f \rVert_{Lip}+R^{-\alpha\cdot\epsilon}\cdot \lVert f \rVert_{\infty}.
\end{equation}
Assuming that $\Theta(R)$ is bounded by a polynomial of degree $p$ (in $R$), we may enlarge $\Theta$ if necessary and assume $\Theta(R)=O(R^{p})$, and as a result $R\cdot \Theta(R)=O(R^{p+1})$, which gives the following estimate
\begin{equation*}
    \frac{1}{\vol_{H}(B^{H}_{R\cdot\Theta(R)})}\int_{u\in B^{H}_{R^{p+1}}}f(u.x)du = O(R^{-s'})+O(R^{-p})+O(R^{-\alpha\cdot\epsilon}),
\end{equation*}
which upon renormalization gives
\begin{equation}
    \frac{1}{\vol_{H}(B^{H}_{T})}\int_{u\in B^{H}_{T}}f(u.x)du = O(T^{-s'/(p+1)})+O(T^{-p/(p+1)})+O(T^{-\alpha\cdot\epsilon/(p+1)}),
\end{equation}
and in particular:
\begin{equation*}
    \frac{1}{\vol_{H}(B^{H}_{T})}\int_{u\in B^{H}_{T}}f(u.x)du \ll T^{-\gamma}\cdot Sob_{K}(f),
\end{equation*}
for some $\gamma=\gamma(s,p,H)>0$, as $\alpha=\alpha(H)$.

\end{proof}

\begin{rem}
The above proof should be compared to A. Str\"ombergsson's treatment~\cite[Section \S3]{STR13} of the extension of M. Burger's result~\cite{burger1990} into the non-uniform settings.
In particular, his function $r(T)~=~T\cdot~e^{-\dist a_{\log T}(p)}$, which is comparable to $T/\mathcal{Y}_{\Gamma}(pa(T))$ is related to the injectivity radius at $a_{-\log T}.x_0$ in our notation.
The cutting procedure described in the course of his proof of Theorem~$1$ should be thought as a more precise treatment of ours, based on an explicit analysis of the height function.
It would be interesting to generalize his cutting procedure to general rank-$1$ groups, and to extend Burger's integration formula~\cite[Lemma~$1$]{burger1990} for this setting in order to achieve sharper estimates (c.f. \cite[Theorem~$1$]{STR13}), and in-particular to be able to control ``Liouvillian points'' which are not controlled by our method.
Generalizing his cutting procedure in the higher rank cases seems substantially harder, as one would have to consider the various parabolic filtrations, giving rise to different Eisenstein series in the spectral decomposition of $L^{2}(G/\Gamma)$.
%
\end{rem}
\begin{rem}
A quantitative non-divergence result for rank-$1$ spaces (and reducible products of such) can be in principal extracted from the work of C.D. Buenger and C. Zheng~\cite[Theorem~$1.3$]{buenger_zheng_2017}, and our method of proof would work in those cases as well, given a quantitative non-divergence statement.
As the maximum between $(C,\alpha)$-good functions is a $(C,\alpha)$-good function, we may form quantitative non-divergence statement for reducible products of different homogenous spaces easily.
Upon achieving such an estimate, our method should generalize naturally and extend our result to the settings of general semisimple Lie groups.
\end{rem}

\section{Proof of Theorem~\ref{thm:effective-disjointness}}\label{sec:quant-disjointness}
Now we conclude the proof of the quantitative disjointness theorem.
We will not keep track of the Sobolev norm dependence carefully, in general, one may extract such dependence from the calculus of Sobolev norms developed in~\cite[Lemmas~$2.2,8.1$]{venkatesh10}.
In practice, our method of proof transforms the twisted averages to averages of ''additive derivatives'' of the function $f$ (in a way similar to the definition of additive derivatives of a function given in the definition of the Gowers norms of a function), and then one needs only to verify that this ''differentiated function'' lie in the proper Sobolev space for which the polynomial equidstribution holds.
This substantially more complex in the case where the functions are unbounded (see Remark~\ref{rem:sobolev}), but essentially doable along our proof, at least for functions which grow slowly enough at the cusp.
\begin{proof}[Proof of Theorem~\ref{thm:effective-disjointness}]
The proof follows the induction scheme of Theorem~\ref{thm:non-effective} by inducting over the degree of the nilcharacter $\psi$.
Given a homogeneous space $X=G/\Gamma$, $x\in G/\Gamma$ a point for which the orbit under $H\leq G$ equidistributes with a polynomial rate $\gamma_{\text{Equidistribution}}$ for a function with finite Sobolev norm of order $K$ as in Definition~\ref{def:quant-equi} and $s>0$ is a bound for the decay rate of matrix coefficients as in~\eqref{eq:HC-bound}, we define the quantity $\gamma_{\dim N}$ as  
\begin{equation}\label{eq:gamma-n-def}
\gamma_{\dim N}=\frac{1}{\left(2\cdot\dim H +2\right)^{\dim N}}\left(\frac{2M}{2M+1}\right)^{\dim N}\min\left\{\gamma_{\text{Equidistribution}},\frac{s}{d_{H}\cdot(2d_{H}+3s)}\right\}.
\end{equation}
Theorem~\ref{thm:effective-disjointness} amounts to showing that the bound in~\eqref{eq:quant-disjoint} holds for any $\eta<\gamma_{\dim N}$, where $N$ is a nilpotent group for which the nilcharacter $\psi$ is realized as a flow over an associated nilmanifold $N/\Lambda$. 
The base case of the induction, when the nilcharacter $\psi$ is trivial (equivalently, $N=\left\{e\right\}$), clearly follows from the quantitative equidistribution of the averages along the orbit $H.x$ in $G/\Gamma$.
For the induction step, combining the differentiation and approximation properties for the nilcharacter~$\psi$, we may assume that
for a given $\varepsilon>0$ we may replace $\psi(b_1.v.y)\overline{\psi(b_2.v.y)}$ by a combination of $O\left(1/\varepsilon^{\dim H} \right)$ nilcharacters of lesser degree, where the coefficients in the combination are bounded by $O(1)$ at the expense of an error of $O_{\psi}(\varepsilon)$.
Hence by induction hypothesis we deduce that:
\begin{equation}\label{eq:effective-twisted-induction}
\begin{split}
&\left\lvert \frac{1}{\vol_{H}\left(B^{H}_{R}\right)}\int_{v\in B^{H}_{R}} \psi(b_1.v.y)\overline{\psi(b_2.v.y)}\left(f(b_1.v.x)\cdot\overline{f(b_2.v.x)}-\rho_{f,f}(b_1 \cdot b_2 ^{-1})\right)dv \right\rvert \\
&\ll_{f,b_{1},b_{2},\psi} O\left(1/\varepsilon^{\dim H} \right)\vol_{H}\left(B^{H}_{R}\right)^{-\gamma_{\dim N-1}}+O(\varepsilon),
\end{split}
\end{equation}
as the function $\left(f(b_1.v.x)\cdot\overline{f(b_2.v.x)}-\rho_{f,f}(b_1 \cdot b_2 ^{-1})\right)$ is a function of vanishing integral by the definition of the matrix coefficient $\rho_{f,f}(b_{1} \cdot b_{2} ^{-1})$ and where $\gamma_{\dim N-1}$ is given by the formula in~\eqref{eq:gamma-n-def}.
We note that the bound in the above inequality \emph{depend} on $b_{1},b_{2}$ by means of dependence on the Sobolev norm of the function $f(b_{1}.x)\cdot\overline{f(b_{2}.x)}$.
Using the Sobolev inequalities proven in~\cite[Lemma $2.2$]{venkatesh10}, we may resolve this dependence explicitly in the following manner

\begin{equation}\label{eq:effective-twisted-induction-dependence}
\begin{split}
&\left\lvert \frac{1}{\vol_{H}\left(B^{H}_{R}\right)}\int_{v\in B^{H}_{R}} \psi(b_1.v.y)\overline{\psi(b_2.v.y)}\left(f(b_1.v.x)\cdot\overline{f(b_2.v.x)}-\rho_{f,f}(b_1 \cdot b_2 ^{-1})\right)dv \right\rvert \\
&\ll_{f,\psi} \lVert \log(b_{1}) \rVert ^{M}\cdot \lVert \log(b_{2}) \rVert^{M} \cdot \left( O\left(1/\varepsilon^{\dim H} \right)\vol_{H}\left(B^{H}_{R}\right)^{-\gamma_{\dim N-1}}+O(\varepsilon)\right),
\end{split}
\end{equation}
where $M$ is the order of the Sobolev norm being used in the mixing estimate.

Optimizing for $\varepsilon$ we may take $\varepsilon=\vol_{H}\left(B^{H}_{R}\right)^{-\frac{\gamma_{\dim N-1}}{\dim H +1}}$, which amounts to the following bound
\begin{equation}
\label{eq:effective-twisted-induction-dependence-optimized}
\begin{split}
&\left\lvert \frac{1}{\vol_{H}\left(B^{H}_{R}\right)}\int_{v\in B^{H}_{R}} \psi(b_1.v.y)\overline{\psi(b_2.v.y)}\left(f(b_1.v.x)\cdot\overline{f(b_2.v.x)}-\rho_{f,f}(b_1 \cdot b_2 ^{-1})\right)dv \right\rvert \\
&\ll_{f,\psi} \lVert \log(b_{1}) \rVert ^{M}\cdot \lVert \log(b_{2}) \rVert^{M} \cdot \left(\vol_{H}\left(B^{H}_{R}\right)^{-\frac{\gamma_{\dim N -1}}{\dim H +1}}\right).
\end{split}
\end{equation}

Fixing $B=B^{H}_{r}\subset H$ for some $r<R$, we may average the above bound over $b_1, b_2\in B^{H}_{r}$ as follows 
\begin{equation*}
\begin{split}
&\frac{1}{ \vol_{H}\left(B^{H}_{r}\right)^{2}}
\iint_{b_1 , b_2\in B^{H}_{r}}
\biggl\lvert
\frac{1}{\vol_{H}\left(B^{H}_{R}\right)}
\int_{v\in B^{H}_{R}} \psi(b_1.v.y)\overline{\psi(b_2.v.y)}(f(b_1.v.x)\cdot\overline{f(b_2.v.x)} \\ 
&~ \ \ -\rho_{f,f}(b_1 \cdot b_2^{-1}))dv \biggl\rvert db_{1}db_{2} \\
&\ll_{f,\psi} r^{2M\cdot d_{H}}\cdot\vol_{H}\left(B^{H}_{R}\right)^{\frac{-\gamma_{\dim N-1}}{\dim H +1}},
\end{split}
\end{equation*}
where we implicitly used that fact that
\begin{equation*}
\begin{split}
    \frac{1}{\vol_{H}(B^{H}_{r})}\int_{B^{H}_{r}}\lVert \log(b)\rVert^{M} &\ll \max_{Lie(B^{H}_{r})}\left\{\lVert v \rVert^{M} \right\} \\
    &\ll r^{M\cdot d_{H}},
\end{split}
\end{equation*}
which follows from the definition of $B^{H}_{r}$ as the exponential image of the related subsets in $Lie(H)$ and from the definition of $d_{H}$ as the trace of the adjoint action of $a_{log r}$ on $Lie(H)$.

Using the bound for decay of matrix coefficients as given in \eqref{eq:HC-bound}, we may estimate the sum of matrix coefficients in the following manner -
\begin{equation*}
\begin{split}
\frac{1}{\vol_{H}\left(B^{H}_{r}\right)^{2}}\iint_{b_{1},b_{2}\in B}\lvert\rho_{f,f}\left(b_{1} \cdot b_{2}^{-1}\right) \rvert &\ll \frac{\vol(B_{t}^{H})}{\vol_{H}\left(B^{H}_{r}\right)}\\
&\ + \frac{1}{\vol_{H}\left(B^{H}_{r}\right)^{2}}\iint_{b_{1},b_{2}\in B^{H}_{r}: b_{1} b_{2}^{-1} \notin B^{H}_{t}} \lvert\rho_{f,f}\left(b_{1} \cdot b_{2}^{-1}\right) \rvert \\
&\ll \frac{t^{d_H}}{\vol_{H}\left(B^{H}_{r}\right)} + t^{-s},
\end{split}
\end{equation*}
where $s$ is a bound for the polynomial decay rate of the matrix coefficients.
Optimizing $t$ we get error of $\vol_{H}\left(B^{H}_{r}\right)^{-s/(d_{H}+s)}$ for the sum of matrix coefficients.

Hence we may bound the summands in~\eqref{eq:main-non-eff-estimate} by
\begin{equation}
\label{eq:mid-computation}
\begin{split}
r^{2M\cdot d_{H}}\vol_{H}\left(B^{H}_{R}\right)^{-\frac{\gamma_{\dim N-1}}{\dim H +1}} + \vol_{H}\left(B^{H}_{r}\right)^{-s/(d_{H}+s)} \\ + O_{f}\left(\vol_{H}\left(B^{H}_{r}\right) \cdot \frac{\sup_{b\in B^{H}_{r}}\vol_{H}\left(b.B^{H}_{R}\triangle B^{H}_{R}\right)}{\vol_{H}\left(B^{H}_{R}\right)} \right).
\end{split}
\end{equation}
The third term is clearly dominated by the first one, hence optimizing between the first and second shows that one may pick
$$ r= R^{\gamma_{\dim  N-1}\cdot\left(\frac{1}{\dim H +1}\right)\cdot\left(\frac{1}{2M+\frac{s}{d_{H}+s}} \right)}, $$
in order to get an effective bound in inequality~\eqref{eq:mid-computation}.
For example choosing  
$ r= R^{\gamma_{\dim  N-1}\cdot\left(\frac{1}{\dim H +1}\right)\cdot\left(\frac{1}{2M+1} \right)}, $ 
reflects as getting the following estimate for each term in inequality~\eqref{eq:mid-computation}
\begin{equation}\label{eq:end-computation}
O_{f}\left(\vol_{H}\left(B^{H}_{R} \right)^{-\gamma_{\dim N -1}\cdot \left(\frac{1}{\dim H +1} \right)\cdot\left(\frac{2M}{2M+1} \right) }\right).    
\end{equation} 
Using the induction hypothesis, we see that both of the expressions in~\eqref{eq:end-computation} are bounded by $2\cdot \gamma_{\dim N}$ with $\gamma_{\dim N}$ as defined in~\eqref{eq:gamma-n-def}.
Upon taking square root we get:
\begin{equation}
\left\lvert\frac{1}{\vol_{H}\left(B^{H}_{R}\right)}\int_{v \in B^{H}_{R}}\psi(v.y)f(v.x)dv \right\rvert \ll_{f}  \vol_{H}\left(B^{H}_{R}\right)^{-\gamma_{\dim N}},
\label{eq:full-effective-twisted-1}
\end{equation}
proving the estimate in~\eqref{eq:quant-disjoint}.
\end{proof}

We end this section by proving an effective \emph{discrete} version of Theorem~\ref{thm:effective-disjointness}, as such results are of interest in some applications and moreover, the idea of using the disjointness as a method to apply certain summation process by studying approperiate spectral kernels have been used by most the papers studying sparse equidistribution up to date~\cite{venkatesh10, Flaminio2016,ubis2016effective,mcadam} and will be used in subsequent paper of the author towards applications in sparse equdistribution problems~\cite{Katz19}.
We prove the theorem only in the case of \emph{abelian horospherical group}, where the samplings from $H$ are drawn along an abelian subgroup isomorphic to $\mathbb{Z}^{\dim H}$, by using Venkatesh's method.
The case where $H$ is a general nilpotent group does not follow from our proof (as it relies on abelian Fourier analysis). In particular the analysis of such case is closely related to questions about effective equidistribution of discrete nilpotent actions on nilmanifolds and their diophantine behavior, which are not considered explicitly by Green-Tao. We hope to explore such questions in future work.
Examples for such abeliean horospherical groups are minimal horospherical subgroups of $\prod_{i=1}^{n}SL_{2}(\mathbb{R}), SL_{n}(\mathbb{R}),$ and $SO(n,1)(\mathbb{R})$.

We fix $H\leq G$ an abelian horospherical subgroup which is isomorphic to the group $\mathbb{R}^{\dim H}$.
The following is a specialization of Theorem~\ref{thm:effective-disjointness}:
\begin{cor}\label{cor:twisted-effective}
	Assume that $f:G/\Gamma\to \mathbb{R}$ is a smooth and bounded function with $\int_{X}fd\mu = 0$ and finite Sobolev norm of order $K$ and $\psi \in \widehat{H}$ is a character, then we have the following bound for any $H$-generic point $x \in X$ which is of polynomial equidistribution rate $\gamma_{\text{Equidistribution}}$ with respect to functions with $K$-order Sobolev norm -
	\begin{equation}
	\left\lvert \frac{1}{\vol_{H}(B^{H}_{R})}\int_{u\in B^{H}_{R}}\psi(u)f(u . x)d\mu \right\rvert \ll_{f} R^{-\eta\cdot d_H},
	\end{equation}
	for some $\eta=\eta(\Gamma,x , \gamma_{\text{Equidistribution}})$.
\end{cor}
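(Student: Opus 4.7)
The plan is to specialize the van der Corput argument of Theorem~\ref{thm:effective-disjointness} to the setting where $H$ is abelian and the twist is a genuine character, in which case the van der Corput differentiation trivializes: $\psi(b_1 u)\overline{\psi(b_2 u)} = \psi(b_1 b_2^{-1})$ is constant in $u$. This eliminates the inductive reduction to lower-degree nilcharacters and leaves only matrix-coefficient and equidistribution estimates. Equivalently, one can realize $\psi(u) = e(\langle \alpha, u\rangle)$ as a degree-one nilcharacter on $\mathbb{R}/\mathbb{Z}$ via the map $u\mapsto \langle \alpha, u\rangle$ and simply quote Theorem~\ref{thm:effective-disjointness} with $\dim N = 1$, but I would prefer to give the direct Venkatesh-style argument below since it gives a cleaner exponent.

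First, I would apply the van der Corput inequality of Section~\ref{sec:non-effective} with $B = B^H_r$ for a parameter $r < R$ to be chosen at the end. Because $H$ is abelian, the differentiated term equals $\psi(b_1 b_2^{-1}) \cdot \vol_H(B^H_R)^{-1} \int_{u\in B^H_R} f(b_1 u\cdot x)\overline{f(b_2 u\cdot x)}\,du$, and the boundary error $\sup_{b\in B}\vol_H(b\cdot B^H_R \triangle B^H_R)/\vol_H(B^H_R)$ is of size $O(r/R)$ by the proposition at the start of Section~\ref{sec:effective}.

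Second, I would decompose this inner integral as $\rho_{f,f}(b_1 b_2^{-1})$ plus a mean-zero remainder. The contribution of the matrix coefficient to the double average over $B^H_r \times B^H_r$ is controlled by Theorem~\ref{thm:HC-mixing} via the standard splitting $\{b_1 b_2^{-1} \in B^H_t\}\cup \{b_1 b_2^{-1}\notin B^H_t\}$ with $t$ optimized, yielding a bound of the form $\vol_H(B^H_r)^{-s/(d_H + s)}$. The remainder is handled by rewriting $u \mapsto f(b_1 u\cdot x)\overline{f(b_2 u\cdot x)} - \rho_{f,f}(b_1 b_2^{-1})$, after the abelian change of variables $u\mapsto b_1^{-1}u$, as a smooth mean-zero function on $X$ evaluated along the $H$-orbit of $x$, and invoking the polynomial equidistribution hypothesis from Definition~\ref{def:quant-equi}.

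Third, optimizing $r$ between these two bounds (each weighted by the appropriate Sobolev prefactor in $r$) and taking a square root produces the claimed saving $R^{-\eta d_H}$ for some explicit $\eta = \eta(\Gamma, x, \gamma_{\text{Equidistribution}}) > 0$. The main obstacle, exactly as in Theorem~\ref{thm:effective-disjointness}, is tracking the Sobolev-norm dependence of the shifted function $y\mapsto f(b_1 y)\overline{f(b_2 y)}$, whose higher derivatives pick up factors of $\Ad_{b_i}$ growing polynomially in $r^{d_H}$; this is handled by the Sobolev calculus of~\cite[Lemma~2.2]{venkatesh10} at the cost of the exponent $M$, which then enters the final value of $\eta$ through a factor of the shape $1/(2M+1)$ familiar from~\eqref{eq:gamma-n-def}. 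Because the character case corresponds to $\dim N = 1$, no further induction on degree is needed and the loss is only a single application of this Sobolev bookkeeping, which is why the exponent obtained here is strictly better than the one produced by the general Theorem~\ref{thm:effective-disjointness}.
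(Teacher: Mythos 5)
Your proposal is correct, and it actually contains both the paper's route and a sharper alternative. The paper gives no standalone proof of this corollary: it simply declares it a specialization of Theorem~\ref{thm:effective-disjointness}, i.e.\ your first option of viewing $\psi$ as a degree-one nilcharacter and quoting the general theorem with $\dim N=1$. Your preferred direct argument is genuinely different in that it exploits the exact identity $\psi(b_1u)\overline{\psi(b_2u)}=\psi(b_1b_2^{-1})$, so the van der Corput step produces a \emph{constant} rather than a lower-degree nilsequence; this removes both the induction on degree and, more importantly, the approximation step in which the differentiated nilcharacter is replaced by $O(1/\varepsilon^{\dim H})$ nilcharacters of lower degree. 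That approximation is what forces the $\tfrac{1}{\dim H+1}$ loss in the optimization of~\eqref{eq:effective-twisted-induction}, so your direct route yields a cleaner exponent: one balances $r^{2Md_H}\vol_H(B^H_R)^{-\gamma_{\text{Equidistribution}}}$ against $\vol_H(B^H_r)^{-s/(d_H+s)}$ and takes a square root, with only the single Sobolev-calculus loss from the shifts $b_1,b_2$. The remaining ingredients you invoke (the matrix-coefficient splitting over $b_1b_2^{-1}\in B^H_t$ versus its complement, the boundary estimate for the F\o lner sets, and the mean-zero property of $f(b_1\cdot)\overline{f(b_2\cdot)}-\rho_{f,f}(b_1b_2^{-1})$ needed to apply Definition~\ref{def:quant-equi}) all match the corresponding steps in the paper's proof of Theorem~\ref{thm:effective-disjointness}, so nothing is missing. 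The only pedantic point worth flagging is that realizing an arbitrary $\psi\in\widehat H$ as a nilcharacter on a compact quotient requires composing with the homomorphism $u\mapsto\langle\alpha,u\rangle$ into $\mathbb{R}/\mathbb{Z}$ rather than taking $\Lambda\le H$ directly, but this is exactly the identification the paper implicitly makes, and it is irrelevant to your direct argument, which never leaves $H$.
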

We show how to deduce from such result a discrete quantitative equidistribution result:
\begin{thm}\label{thm:discrete-equi}
In the same settings as above we get the following \emph{discrete equidistribution} result:
\begin{equation}
\left\lvert \frac{1}{\sharp\left\{\mathbb{Z}^{\dim H}\cap B^{H}_{R} \right\}} \sum_{v\in \mathbb{Z}^{\dim H}\cap B^{H}_{R}}f(u_{v}.x) \right\rvert \ll_{f} R^{-\eta'\cdot d_H}.
\end{equation}
for some $\eta'=\eta'(\Gamma,x,\gamma_{\text{Equidistribution}})$.
\end{thm}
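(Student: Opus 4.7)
The plan is to deduce the discrete equidistribution from the continuous twisted equidistribution of Corollary~\ref{cor:twisted-effective} by a Poisson--summation and Fourier--expansion argument on $\mathbb{R}^{\dim H}/\mathbb{Z}^{\dim H}$, following Venkatesh's approach for the horocycle flow. The key point is that, since $H$ is abelian, every unitary character of $\mathbb{R}^{\dim H}$ appearing as a Fourier mode on the quotient torus is exactly an element of $\widehat{H}$, and hence falls within the scope of Corollary~\ref{cor:twisted-effective}.

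Fix once and for all a smooth non-negative bump function $\phi$ on $\mathbb{R}^{\dim H}$ of total mass one, supported in the unit ball, and for a parameter $\delta>0$ to be optimised set $\phi_{\delta}(t)=\delta^{-\dim H}\phi(t/\delta)$. The first step is to replace the discrete point-evaluations $f(u_{v}.x)$ by the smoothed averages $\int \phi_{\delta}(t-v)f(u_{t}.x)\,dt$; by the Lipschitz regularity of $f$ and of the orbit map $t\mapsto u_{t}.x$, each substitution costs at most $O_{f}(\delta)$, so summing over the $\asymp R^{d_H}$ lattice points of $\mathbb{Z}^{\dim H}\cap B^{H}_{R}$ yields a cumulative smoothing error of $O_{f}(\delta R^{d_H})$. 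Swapping the sum and the integral, the smoothed sum equals $\int f(u_{t}.x)\Phi_{\delta,R}(t)\,dt$ where $\Phi_{\delta,R}(t)=\sum_{v\in\mathbb{Z}^{\dim H}\cap B^{H}_{R}}\phi_{\delta}(t-v)$; up to a boundary correction supported in a $\delta$-neighbourhood of $\partial B^{H}_{R}$, this agrees with $\int_{B^{H}_{R}}\Phi_{\delta}(t)f(u_{t}.x)\,dt$, where $\Phi_{\delta}(t)=\sum_{v\in\mathbb{Z}^{\dim H}}\phi_{\delta}(t-v)$ is the full $\mathbb{Z}^{\dim H}$-periodisation of $\phi_{\delta}$.

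The second step is the Fourier expansion of $\Phi_{\delta}$: by the usual Poisson summation formula one has
\begin{equation*}
\Phi_{\delta}(t)=\sum_{n\in\mathbb{Z}^{\dim H}}\widehat{\phi}(\delta n)\,e^{2\pi i\langle n,t\rangle},\qquad \lvert\widehat{\phi}(\delta n)\rvert\ll_{K}(1+\delta\lVert n\rVert)^{-K}
\end{equation*}
for every $K>0$. The zero-frequency term contributes $\widehat{\phi}(0)\int_{B^{H}_{R}}f(u_{t}.x)\,dt$, which is bounded by $R^{d_H(1-\gamma_{\text{Equidistribution}})}$ thanks to the untwisted equidistribution hypothesis on $x$. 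For each nonzero $n$, Corollary~\ref{cor:twisted-effective} applied to the character $\psi_{n}(t)=e^{2\pi i\langle n,t\rangle}\in\widehat{H}$ controls $\int_{B^{H}_{R}}\psi_{n}(t)f(u_{t}.x)\,dt$ by $R^{d_H(1-\eta)}$ times a factor polynomial in $\lVert n\rVert$, reflecting the Sobolev norm of the sampling function of $\psi_{n}$ on the torus nilmanifold. Combining with the rapid decay of $\widehat{\phi}(\delta n)$ and summing, the total Fourier-side contribution is bounded by $O(\delta^{-(M+\dim H)}R^{d_H(1-\eta)})$ for some fixed $M$ determined by the Sobolev exponents entering Theorem~\ref{thm:effective-disjointness}.

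Balancing the smoothing error $\delta R^{d_H}$ against the Fourier error $\delta^{-(M+\dim H)}R^{d_H(1-\eta)}$ by taking $\delta\approx R^{-\eta d_H/(M+\dim H+1)}$ and normalising by $\sharp(\mathbb{Z}^{\dim H}\cap B^{H}_{R})\asymp R^{d_H}$ produces the claimed estimate with, for instance, $\eta'=\eta/(M+\dim H+1)$. The main subtlety is making the polynomial $\lVert n\rVert$-dependence of Corollary~\ref{cor:twisted-effective} explicit, which amounts to revisiting the induction in the proof of Theorem~\ref{thm:effective-disjointness} and tracking how the Sobolev norm of the sampling function of $\psi_{n}$ enters the estimate — routine in the abelian case, since that norm is visibly polynomial in $\lVert n\rVert$. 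This is precisely the feature of abelian Fourier analysis that breaks down for nonabelian $H$, explaining why the theorem is restricted to the abelian horospherical setting.
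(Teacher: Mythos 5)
Your proposal is correct and follows essentially the same route as the paper: smooth the discrete sum with a mollifier $\phi_{\delta}$, convert to Fourier modes via Poisson summation, apply Corollary~\ref{cor:twisted-effective} to each nonzero frequency (and the untwisted equidistribution to the zero mode), and optimize $\delta$ against $R$. The only cosmetic difference is that the paper convolves the truncated orbit function $F_{R}$ with $g_{\delta}$ and Poisson-sums that, rather than periodizing the bump; and where you track a possible polynomial $\lVert n\rVert$-dependence, the paper simply invokes uniformity of the twisted bound in the character, which holds here because for a linear character the Van der Corput differencing renders $\psi(b_1 v)\overline{\psi(b_2 v)}$ independent of $v$ — your extra caution is harmless since the rapid decay of $\widehat{\phi}(\delta n)$ absorbs any polynomial loss.
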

\begin{proof}
	The proof is an immediate generalization of~\cite[Theorem~$3.1$]{venkatesh10}.
	Fix $g_{1}(v)$ to be a smooth bump function of total mass equal to $1$, supported in the ball of radius $1$ around $0$.
	Define the family of bump functions, $g_{\delta}(v)$ as follows:
	\begin{equation*}
	    g_{\delta}(v) = \delta^{-\dim H}g_{1}(\delta^{-1}v).
	\end{equation*}
	For each $\delta>0$ the function $g_{\delta}(v)$ is a smooth bump function on $\mathbb{R}^{\dim H}$ supported inside a $\delta$-neighborhood of $0$.
	For $\lambda \in \widehat{\mathbb{R}^{\dim H}}$ write $\hat{g_{\delta}}(\lambda)~=~\int_{\mathbb{R}^{\dim H}}\exp\left(-2\pi i \left<\lambda,v\right>\right)g_{\delta}(v)dv$.
	As $g_{\delta}$ is smooth, its Fourier coefficients decay rapidly and in-particular they are summable, and satisfy the following inequality
	\begin{equation*}
	\sum_{k\in\mathbb{Z}^{\dim H}} \lvert \hat{g_{\delta}}(k) \rvert \ll \delta^{-\dim H}.
	\end{equation*}
	Define the function $F_{R}:\mathbb{R}^{\dim H} \to \mathbb{C}$ by
	\begin{equation*}
	    F_{R}(v)= \begin{cases} \frac{1}{\vol_{H}(B^{H}_{R})}f(v.x) &\ \lVert v \rVert \leq R \\
	    0 &\ \text{otherwise}
	    \end{cases}.
	\end{equation*}
	We have that $F_{R}\in L^{1}(\mathbb{R}^{\dim H})$.
	Moreover, we have that $$\lVert F_{R}\star g_{\delta}-F_{R}\rVert_{\infty} \ll_{f} \delta,$$ by a Lipschitz estimate.
	Using Poisson summation for $F_{R}\star g_{\delta}$ we have
	\begin{equation*}
	\begin{split}
	    \sum_{n\in \mathbb{Z}^{\dim H} \cap B^{H}_{R}}F_{R}\star g_{\delta}(n) &= \sum_{n\in \mathbb{Z}^{\dim H}} \widehat{F_{R}\star g_{\delta}}(n) \\
	    &= \sum_{n\in \mathbb{Z}^{\dim H}} \hat{F_{R}}(n)\hat{g_{\delta}}(n). 
	\end{split}
	\end{equation*}
	Using Corollary~\ref{cor:twisted-effective} we have $\lvert\hat{F_{R}}(n)\rvert \ll_{f} R^{-\eta}$ uniformly in $n$, so by using the  summability of the Fourier coefficients of $g_{\delta}(v)$ we have that
	\begin{equation*}
	    \left\lvert \sum_{n\in \mathbb{Z}^{\dim H} \cap B^{H}_{R}}F_{R}\star g_{\delta}(n) \right\rvert \ll_{f} \delta^{-\dim H}\cdot R^{-\eta}, 
	\end{equation*}
	From which we deduce that
	\begin{equation*}
	    \left\lvert \sum_{n\in \mathbb{Z}^{\dim H}\cap B^{H}_{R}}F_{R}(n) \right\rvert \ll_{f} \delta+\delta^{-\dim H}\cdot R^{-\eta}.
	\end{equation*}
	Optimizing for $\delta$ we have that $\delta=R^{-\eta/(\dim H+1)}$.
	As we have the following estimate for the main term of the number of lattice points in a ball
	$\lim_{R\to \infty}\frac{\sharp\left\{\mathbb{Z}^{\dim H}\cap B^{H}_{R} \right\}}{\vol_{H}(B^{H}_{R})}=1$, the theorem follows for every $R\gg 0$.

\end{proof}
\begin{cor}\label{cor:discrete-equi-twisted}
In the same settings as above we get the following \emph{discrete equidistribution} result for any character $\psi \in \widehat{H}$:
\begin{equation}
\left\lvert \frac{1}{\sharp\left\{\mathbb{Z}^{\dim H}\cap B^{H}_{R} \right\}} \sum_{v\in \mathbb{Z}^{\dim H}\cap B^{H}_{R}}\psi(v.y)f(v.x) \right\rvert \ll_{f} R^{-\eta'\cdot d_{H}}
\end{equation}
\end{cor}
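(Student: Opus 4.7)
The plan is to adapt the proof of Theorem~\ref{thm:discrete-equi} almost verbatim, absorbing the character twist into the function being Poisson-summed. Since $H\cong \mathbb{R}^{\dim H}$ is abelian, every $\psi\in\widehat{H}$ has the form $\psi(v)=e^{2\pi i\langle\lambda_\psi,v\rangle}$ for some frequency $\lambda_\psi\in\mathbb{R}^{\dim H}$, and the factor $\psi(v.y)$ differs from $\psi(v)$ at most by a unimodular constant depending on $y$. Consequently the integrand $\psi(v.y)f(v.x)$ is precisely of the form controlled by Corollary~\ref{cor:twisted-effective}.

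First, I would define
\begin{equation*}
F_R(v) = \begin{cases} \dfrac{1}{\vol_H(B_R^H)}\,\psi(v.y)\,f(v.x) &\ \lVert v\rVert \leq R\\ 0 &\ \text{otherwise}\end{cases}
\end{equation*}
and reuse the same smooth bump family $g_\delta(v)=\delta^{-\dim H}g_1(\delta^{-1}v)$ from the proof of Theorem~\ref{thm:discrete-equi}. Since $\lVert\psi\rVert_\infty=1$, the Lipschitz estimate still gives $\lVert F_R\star g_\delta-F_R\rVert_\infty\ll_f\delta$, because the smoothing error is controlled by the Lipschitz norms of both $\psi$ (trivially a character) and $f$.

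Next, I would apply Poisson summation to obtain
\begin{equation*}
\sum_{n\in\mathbb{Z}^{\dim H}}F_R\star g_\delta(n)=\sum_{n\in\mathbb{Z}^{\dim H}}\widehat{F_R}(n)\,\widehat{g_\delta}(n).
\end{equation*}
The key point is that, up to a unimodular scalar, $\widehat{F_R}(n)$ is exactly the continuous twisted average of $f(v.x)$ against the shifted character $e^{2\pi i\langle\lambda_\psi-n,v\rangle}$ over $B_R^H$. Invoking Corollary~\ref{cor:twisted-effective} with this character (which, being another element of $\widehat{H}$, is a legitimate twist) yields $\lvert\widehat{F_R}(n)\rvert\ll_f R^{-\eta\cdot d_H}$, uniformly in $n$ modulo polynomial growth in $\lvert\lambda_\psi-n\rvert$ coming from Sobolev norms of the character.

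Combining these ingredients with the summability $\sum_n\lvert n\rvert^M\lvert\widehat{g_\delta}(n)\rvert\ll\delta^{-\dim H-M}$ (valid for smooth $g_\delta$ and any fixed $M$), one arrives at
\begin{equation*}
\left\lvert \sum_{n\in\mathbb{Z}^{\dim H}\cap B_R^H}F_R(n)\right\rvert \ll_f \delta+\delta^{-(\dim H+M)}\cdot R^{-\eta\cdot d_H},
\end{equation*}
and optimizing $\delta\sim R^{-\eta\cdot d_H/(\dim H+M+1)}$ produces a polynomial rate $R^{-\eta'\cdot d_H}$ for some $\eta'>0$, using finally that $\sharp\{\mathbb{Z}^{\dim H}\cap B_R^H\}\sim\vol_H(B_R^H)$. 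The main technical obstacle will be showing that the uniformity in $n$ of the bound in Corollary~\ref{cor:twisted-effective} degrades at most polynomially in $\lvert\lambda_\psi-n\rvert$; this polynomial degradation is exactly what the rapid decay of $\widehat{g_\delta}$ is designed to absorb, so no substantive new input is required beyond what is already present in the abelian-$H$ version of the twisted equidistribution estimate.
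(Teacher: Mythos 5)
Your proposal is correct and follows essentially the same route as the paper: the paper's proof likewise observes that the twist by $\psi$ merely translates the frequencies on the Fourier side of the Poisson summation argument from Theorem~\ref{thm:discrete-equi}, and that the bound of Corollary~\ref{cor:twisted-effective} is uniform over $\widehat{H}$, so the computation goes through unchanged. Your extra care about possible polynomial degradation in $\lvert\lambda_\psi-n\rvert$, absorbed by the rapid decay of $\widehat{g_\delta}$, is a reasonable precaution but not a departure from the paper's argument.
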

\begin{proof}
Following the Poisson summation argument in Theorem~\ref{thm:discrete-equi}, we see that twisting the average by $\psi$ only affects the characters by translation on the Fourier side, as the proof of Corollary~\ref{cor:twisted-effective} is uniform in $\psi$, the computations follow through.
\end{proof}

\bibliographystyle{plain}
\bibliography{disjointness-nilflow-horocyclic}

\end{document}